\theoremstyle{plain}
\newtheorem{lemma}{Lemma}[section]
\newtheorem{theorem}[lemma]{Theorem}
\newtheorem{proposition}[lemma]{Proposition}
\newtheorem{definition}[lemma]{Definition}
\newtheorem{example}{Example}
\theoremstyle{remark}
\newcommand*  {\N} {{\mathbb N}}
\newcommand{\Rb}{\mathbb{R}}
\newcommand{\Nb}{\mathbb{N}}
\newcommand{\Eb}{\mathbb{E}}
\newcommand{\Pb}{\mathbb{P}}
\newcommand{\Hb}{\mathbb{H}}
\newcommand{\Fb}{\mathbb{F}}
\newcommand{\Fc}{\mathcal{F}}
\newcommand{\Fct}{\left( \mathcal{F}_t \right)_{t \geq 0}}
\newcommand{\s}{\sigma}
\newcommand{\Qb}{\mathbb{Q}}
\newcommand{\Tb}{\mathbb{T}}
\newcommand{\Wb}{\mathbb{W}}
\newcommand{\Pc}{\mathcal{P}}
\newcommand{\LL}{\Lambda}
\newcommand{\bV}{\overline{V}}
\newcommand{\Uc}{\mathscr{U}}
\newcommand{\Sc}{\mathcal{S}}
\newcommand{\bx}{x}
\newcommand{\bk}{k}
\newcommand{\hook}{\hookrightarrow}	
\newcommand*{\rom}[1]{\expandafter\@slowromancap\romannumeral #1@}
\def\a{\alpha}
\def\b{\beta}
\numberwithin{equation}{section}
\begin{document}

\vskip 0.125in

\title[Stochastic Primitive Equations]
{On the local well-posedness of fractionally dissipated primitive equations with transport noise}

\date{\today}

\author[R. Hu]{Ruimeng Hu}
\address[R. Hu]
{	Department of Mathematics \\
Department of Statistics and Applied Probability \\
     University of California  \\
	Santa Barbara, CA 93106, USA.} \email{rhu@ucsb.edu}

\author[Q. Lin]{Quyuan Lin}
\address[Q. Lin]
{	School of Mathematical and Statistical Sciences \\
Clemson University\\
Clemson, SC 29634, USA.} \email{quyuanl@clemson.edu}

\author[R. Liu]{Rongchang Liu}
\address[R. Liu]{Department of Mathematics, University of Arizona, Tucson, AZ 85721, USA}
\email{lrc666@arizona.edu}

\begin{abstract}
    We investigate the three-dimensional fractionally dissipated primitive equations with transport noise, focusing on subcritical and critical dissipation regimes characterized by \( (-\Delta)^{s/2} \) with \( s \in (1,2) \) and \( s = 1 \), respectively. For $\sigma>3$, we establish the local existence of unique pathwise solutions in Sobolev space $H^\sigma$. This result applies to arbitrary initial data in the subcritical case ($s \in(1,2)$), and to small initial data in the critical case ($s=1$). The analysis is particularly challenging due to the loss of horizontal derivatives in the nonlinear terms and the lack of full dissipation. To address these challenges, we develop novel commutator estimates involving the hydrostatic Leray projection.    
\end{abstract}

\maketitle

MSC Subject Classifications: 35Q86, 60H15, 76M35, 35Q35, 86A10

\vskip0.1in

Keywords: stochastic primitive equations; transport noise; well-posedness; fractional dissipation; hydrostatic Leray projection

\section{Introduction} 
In this paper, we establish the local existence and uniqueness of pathwise solutions in Sobolev spaces for the following three-dimensional fractionally dissipated primitive equations (PE, also called the hydrostatic Navier-Stokes equations) with Stratonovich transport noise, defined on $\mathbb T^3 = \mathbb R^3/\mathbb Z^3$, 
\begin{align}\label{PE-system}
    \begin{split}
        &d V + (V\cdot \nabla_h V + w\partial_z V + f_0 V^\perp +\nabla_h p ) dt = -\Lambda^s Vdt +  \sum_{k=1}^{\infty}\left[\nabla_h\widetilde{p}_k+b_k\cdot\nabla V \right]\circ dW^k  ,  
        \\
        &\partial_z p  =0 ,\quad  \partial_z \widetilde{p}_k  =0, 
        \\
        &\nabla_h \cdot V + \partial_z w =0,
    \end{split} 
\end{align}
where $\Lambda^s=(-\Delta)^{s/2}$ denotes the fractional Laplacian with $s\in[1,2)$, $\nabla_h=(\partial_{x_1},\partial_{x_2})$ is the 2D horizontal gradient  and $\nabla=(\partial_{x_1},\partial_{x_2}, \partial_z)$ is the full 3D gradient. Here $V$ and $w$  represent the horizontal and vertical velocity component, respectively, $f_0\in \mathbb{R}$ is the Coriolis parameter, $p$ denotes the pressure, and $(\widetilde{p}_k)_{k\geq 1}$ are the components of the turbulent pressure \cite{mikulevicius2001equations}. $(W^k)_{k\geq 1}$ is a sequence of independent standard Brownian motions, and $(b_k)_{k\geq 1}$ are divergence-free vectors representing the coefficients of the transport noise. The system satisfies the following boundary conditions:  
\begin{equation}\label{BC-T3}
    V, w \text{ are periodic in }  (x_1,x_2,z) \text{ with period }  1,  \qquad V  \text{ is even in }  z  \text{ and }  w  \text{ is odd in }  z.
\end{equation}
Note that when the noise $b_k=(b_k^1, b_k^2, b_k^3)$ satisfies $(b_k^1,b_k^2)$ being even in $z$ and $b_k^3$ being odd in $z$, the domain consisting of periodic functions adhering to such symmetry conditions remains invariant under the dynamics of system \eqref{PE-system}.

The fully viscous PE (\(s=2\)) is derived from the Navier-Stokes equations \cite{azerad2001mathematical,li2019primitive} and is widely used in the study of large-scale oceanic and atmospheric dynamics. In the deterministic setting, the global existence and uniqueness of strong solutions have been demonstrated in \cite{cao2007global,kobelkov2006existence}. On the other hand, the inviscid PE ($s=0$) is ill-posed in Sobolev spaces and the Gevrey class of order strictly greater than 1 \cite{renardy2009ill,han2016ill,ibrahim2021finite}. While they are locally well-posed in the analytic class \cite{kukavica2011local,ghoul2022effect}, finite time blowup of solutions has been established \cite{cao2015finite,wong2015blowup,ibrahim2021finite,collot2023stable}.  In the stochastic setting, the well-posedness of the fully viscous PE has been investigated under the influence of multiplicative noise \cite{debussche2011local,debussche2012global} and transport noise \cite{brzezniak2021well,agresti2022stochastic,agresti2024stochastic,agresti2023primitive}.  The inviscid PE with multiplicative noise perturbation has been studied more recently in \cite{hu2023local,hu2023pathwise,hu2024regularization}. These studies collectively highlight the critical role of viscosity in determining the well-posedness/ill-posedness and global existence/finite time blowup of solutions to the PE system. Contrasting to the Navier-Stokes and Euler equations, such a property is unique to the PE system. This naturally motivates the study of PE systems with fractional dissipation, which interpolate between the fully viscous and inviscid cases, and the exploration of how the system's behavior evolves as the dissipation index varies.

Fractional dissipation introduces ``weaker'' and ``non-local'' dissipation compared to the classical dissipation arising from the full Laplacian. This poses significant challenges in mathematical analysis while offering intriguing opportunities for modeling physical phenomena. Fractional dissipation effectively captures anomalous diffusion, memory effects, and non-local interactions, making it a powerful tool in turbulence modeling \cite{caffarelli2009some,carrillo2017mathematical}. Its applications have expanded across fluid dynamics, appearing in studies of the Euler equations \cite{constantin2014unique,constantin2012nonlinear}, Boussinesq equations \cite{wu2014well,yang2014global}, quasi-geostrophic equations \cite{kiselev2007global,caffarelli2010drift,abdo2024dirichlet}, and magnetohydrodynamics \cite{wu20182d,dai2019class}. However, very little is known about the PE with fractional dissipation. To the best of our knowledge, the only result in this direction is an ongoing work \cite{abdo2024primitive} by the second author of this paper and his collaborators, which explores 2D fractional dissipative PE in the deterministic setting. On the other hand, nothing has been investigated for the 3D fractional dissipative PE, and the study of this system in the stochastic setting is completely open.

Transport noise, initially introduced in \cite{kraichnan1968small,kraichnan1994anomalous} to model small-scale turbulence effects, has also gained prominence in stochastic fluid mechanics. It has been extensively studied in the context of stochastic Navier-Stokes equations \cite{mikulevicius2001equations,mikulevicius2004stochastic,flandoli2022additive} and has more recently been applied to the fully viscous PE \cite{agresti2022stochastic,agresti2024stochastic,agresti2023primitive}. However, existing results do not extend to the fractionally dissipated PE due to the lack of full dissipation. In this work, we focus on the Stratonovich formulation of transport noise for several reasons. First, Stratonovich noise is more compatible with numerical simulations due to Wong-Zakai convergence results \cite{flandoli2011random} and two-scale type arguments \cite{flandoli2022additive,debussche2024second}. Second, energy estimates in Sobolev spaces are unattainable in the It\^o formulation because weak dissipation cannot sufficiently counteract energy input from the noise. Finally, recent studies have demonstrated the regularization effects of Stratonovich transport noise \cite{flandoli2021high,flandoli2021delayed,luo2023enhanced,agresti2024global}, suggesting that such noise may play a pivotal role in understanding the fractionally dissipated PE. These insights motivate our exploration of the system’s behavior under combined fractional dissipation and transport noise.

\medskip
\noindent\textbf{Challenges and key innovations.} We highlight some mathematical difficulties in the analysis of system \eqref{PE-system} and discuss important innovations. In contrast to the 2D case, where global existence of solutions in the subcritical regime is established \cite{abdo2024primitive}, the 3D setting lacks several favorable properties present in two dimensions. As a result, the techniques and methodologies employed in \cite{abdo2024primitive} cannot be directly applied, and global existence may not be expected in the 3D case. Instead, we focus on establishing local well-posedness.

A primary difficulty arises from the lack of full dissipation, requiring careful treatment of the second order derivatives from the It\^o-Stratonovich corrector.
A crucial step involves leveraging the cancellation between the It\^o-Stratonovich corrector and the energy input from the noise after applying It\^o’s formula. Specifically, we derive the key estimate:
\begin{align}\label{e.121101}
    \langle \LL^{\s}\Pc(b_k\cdot\nabla\Pc(b_k\cdot\nabla) V),\LL^{\s} V\rangle+\langle\LL^{\s}\Pc(b_k\cdot\nabla V),\LL^{\s}\Pc(b_k\cdot\nabla V)\rangle\lesssim_{b_k} \|V\|_{\s+\frac12}^2,
\end{align}
where $\Pc$ is the hydrostatic Leray projection defined in \eqref{e.062501}, and  $\|V\|_{\s}$ denotes the Sobolev $H^{\s}$ norm. 

In the context of the Euler equations \cite{crisan2019solution,lang2023well} and Boussinesq equations \cite{alonso2020well} in vorticity form, similar estimates to \eqref{e.121101}, though without $\Pc$, have been derived by exploiting commutator cancellations of pseudo-differential operators \cite{hormander2007analysis,taylor2017pseudodifferential}, resulting in bounds of $\|V\|_{\s}^2$. These results were further generalized in \cite{tang2022general}, encompassing the usual Leray projection and various fluid models. However, the hydrostatic Leray projection behaves less favorably than the usual Leray projection. Consequently, prior results cannot be directly applied. For the hydrostatic Leray projection, we establish the following commutator estimate (see Lemma \ref{l.072101}):
\begin{align*}
    \|[\Pc, b_k\cdot\nabla]f\|_{\s}\lesssim\|b_k\|_{\s+1}\|f\|_{\s}+\|\partial_zb_k^h\|_{\s-1}\|f\|_{\s+1}, 
\end{align*}
for sufficiently smooth $f$. Here the commutator of two operators $A$ and $B$ is denoted by $[A,B] := AB-BA$, and $b_k^h$ is the horizontal component of the vector field $b_k$. Notably, it is known that under the usual Leray projection a better bound $\|f\|_{\s}$ can be achieved instead of $\|f\|_{\s+1}$. This is essentially due to the fact that the symbol of the hydrostatic Leray projection is singular along the entire  $k_3$-axis in frequency space, whereas the usual Leray projection has only a single singularity at the origin. Therefore, proving \eqref{e.121101} requires rewriting the left-hand side into a suitable combination of commutators and carefully balancing the derivatives using commutator estimates in negative Sobolev norms (see Lemma \ref{l.062701}).

The bound in \eqref{e.121101}, which matches the order of the nonlinear term, can be controlled via interpolation with fractional dissipation in the subcritical case ($s\in(1,2)$). For the critical case (\(s = 1\)), we require both small initial conditions and small noise (through \(\partial_z b_k^h\)). In the supercritical case (\(s < 1\)), the dissipation is insufficient to control the highest order term due to the noise and the nonlinear term, necessitating analytic initial data, as in \cite{hu2023local}. This is consistent with the results in \cite{abdo2024primitive} where the ill-posedness in the supercritical case and in the critical case with large initial data is established. Furthermore, in the supercritical case, the cancellation observed in \eqref{e.121101} fails in the analytic setting unless \(b_k\) is independent of spatial variables, see remarks in Section~\ref{s.121102}. Thus, establishing well-posedness for the supercritical case with general transport noise remains an open problem.

To prove pathwise uniqueness, we apply a double cutoff technique to address difficulties arising from the nonlinear term. This approach can be applied to improve existing results, such as those in \cite{hu2023local}, demonstrating the existence of pathwise solutions in the space of analytic functions. 

\medskip
\noindent\textbf{Organization of the paper.} The rest of this paper is organized as follows. In Section~\ref{section:preliminary}, we introduce the mathematical preliminaries, set up the problem, and state the main result. Section~\ref{section:inviscid} is devoted to proving the main result. Specifically, we derive uniform estimates for the truncated cutoff system in Section~\ref{s.070701}, establish the existence of martingale solutions using standard compactness arguments in Section~\ref{s.121001}, and prove pathwise uniqueness, thereby completing the proof of the main result in Section~\ref{s.121101}. Some remarks on the supercritical case are provided in Section~\ref{s.121102}. Finally, auxiliary lemmas and technical commutator estimates are presented in Appendices~\ref{section:auxlemma} and \ref{s.121103}, respectively.

\section{Preliminaries and the main Result}\label{section:preliminary}

In this section, we introduce notations and assumptions and state our main result. The universal constant $C$ appearing in the paper may change from line to line. When necessary, we shall use subscripts to emphasize the dependence of $C$ on some parameters. 

\subsection{Functional settings} 

Let $\bx:= (\bx',z) = (x_1, x_2, z)\in \mathbb{T}^3$, where $\bx'$ and $z$ represent the horizontal and vertical variables, respectively, and $\mathbb{T}^3=\mathbb R^3/\mathbb Z^3$ denotes the three-dimensional torus with unit volume. Denote the $L^2$ norm of a function $f$ as
$$
    \|f \|:=\|f\|_{L^2(\mathbb{T}^3)} = \Big(\int_{\mathbb{T}^3} |f(\bx)|^2 d\bx\Big)^{\frac{1}{2}},
$$
associated with the inner product
$  \langle f,g\rangle = \int_{\mathbb{T}^3} f(\bx)g(\bx) d\bx
$
for $f,g \in L^2(\mathbb{T}^3)$. For a function $f \in L^2(\mathbb{T}^3)$, let $\widehat{f}_{\bk}$ denote its Fourier coefficient such that
\begin{equation*}
    f(\bx) = \sum\limits_{\bk\in 2\pi\mathbb{Z}^3} \widehat{f}_{\bk} e^{ i\bk\cdot \bx}, \qquad \widehat{f}_{\bk} = \int_{\mathbb{T}^3} e^{- i\bk\cdot \bx} f(\bx) d\bx.
\end{equation*}
For $\sigma \geq 0$, the Sobolev $H^{\s}$ norm and the $\dot{H}^{\sigma}$ semi-norm are defined as
\begin{eqnarray*}
&&\hskip-.1in
\|f \|_{H^{\s}}:= \Big(\sum\limits_{\bk\in 2\pi\mathbb{Z}^3} (1+|\bk|^{2\sigma}) |\widehat{f}_{\bk}|^2  \Big)^{\frac{1}{2}}, \qquad \|f \|_{\dot{H}^{\sigma}}:= \Big(\sum\limits_{\bk\in  2\pi\mathbb{Z}^3} |\bk|^{2\sigma} |\widehat{f}_{\bk}|^2  \Big)^{\frac{1}{2}}.
\end{eqnarray*}
Note that these two norms are equivalent for functions with zero mean. For convenience, we denote $H:=L^2$. When a function $f=f(x')$ depends only on the horizontal variables, we denote by $H^\sigma_h$ the corresponding Sobolev spaces. For further details on Sobolev spaces, we refer the reader to \cite{adams2003sobolev}.

The divergence-free condition from \eqref{PE-system} and the oddness of $w$ in $z$ imply that
\begin{equation*}
    \int_0^1 \nabla_h \cdot V (\bx',z)dz = 0.
\end{equation*}
Since $b_k$ are divergence-free, integrating the momentum equation in system \eqref{PE-system} over $\mathbb T^3$ shows that $V$ has zero mean for any $t>0$, provided the initial data $V_0$ has zero mean. Furthermore, as we consider $V$ to be even in the $z$ variable, it follows that $V\in \Hb$ where
\begin{equation*}
    \Hb := \left\{ f\in L^2(\mathbb{T}^3) : \int_0^1 \nabla_h \cdot f (\bx',z)dz = 0 ,\, f \text{ has zero mean and is even in } z\right\}.
\end{equation*}
We define $\Hb^{\sigma}= H^\s \cap \Hb$ for $\sigma>0$ and use the notation $\|\cdot\|_{\s}$ to denote the corresponding Sobolev norm.

The hydrostatic Leray projection $\mathcal P$ is defined as 
\begin{align}\label{e.062501}
    \mathcal P \varphi  = \varphi - \nabla_h  \Delta_h^{-1} \nabla_h \cdot \overline{\varphi}
\end{align}
for any regular velocity field $\varphi$, where $\bar{\varphi }(\bx') = \int_0^1 \varphi (\bx',z) dz$ represents the barotropic part of $\varphi$. 
We also define 
\[\Qb \varphi =(I-\Pc)\varphi=  \nabla_h  \Delta_h^{-1} \nabla_h \cdot \overline{\varphi}, \]
which is a function depending only on the horizontal variables $x'$. 

\subsection{Assumptions on the noise}\label{sec:stochasticprelim}
Let $\Sc = \left(\Omega, \Fc, \Fb, \Pb\right)$ be a stochastic basis with filtration $\Fb = \Fct$ satisfying the usual conditions.  Let $\Uc$ be a separable Hilbert space, and let $\Wb$ be an $\Fb$-adapted cylindrical Wiener process with reproducing kernel Hilbert space $\Uc$ on $\Sc$. Let $\lbrace e_k \rbrace_{k = 1}^\infty$ be an orthonormal basis of $\Uc$, then $\Wb$ may be formally written as $\Wb = \sum_{k=1}^\infty e_kW^k$, where $\{W^k\}_{k=1}^\infty$ are independent standard Brownian motions on $\Sc$. If we define the linear operators $P,\mathcal{B}:\Uc\to H$ by 
\[Pe_k=\widetilde{p}_k, \quad \mathcal{B}e_k = b_k, \quad k\geq 1,\]
then the noise term in \eqref{PE-system} is obtained from $\Uc$ as 
\begin{align*}
    \sum_{k=1}^{\infty}\left[\nabla_h\widetilde{p}_k+b_k\cdot\nabla V \right]\circ dW^k= [\nabla_hP+\mathcal{B}\cdot\nabla V]\circ d\Wb . 
\end{align*}
The pressure term $(\widetilde{p}_k)_{k\geq 1}$ will be eliminated after applying the hydrostatic Leray projector \eqref{e.062501}. We assume that each $b_k=(b_{k}^1,b_{k}^2,b_{k}^3)$ is divergence-free and has zero mean, with $b_k^3$ being odd in $z$ and $(b_{k}^1,b_{k}^2)$ being even in $z$. The regularity assumption on $b:=(b_k)_{k\geq1}$ is 
\begin{align}\label{e.121003}
    (b_k)_{k\geq1}\in \ell^2(\Nb,H^{\s+3}). 
\end{align}
In the case of $s=1$, we additionally assume that 
\begin{align}\label{e.121002}
    \sum_k\|b_k\|_{\s+3}\|\partial_zb_k^h\|_{\s-\frac32}\leq \frac{1}{C},
\end{align}
where $C$ is a universal constant from Sobolev inequalities.

\subsection{Notion of solutions}

In this section, we introduce the notions of  pathwise solutions (strong solutions in the stochastic sense) and martingale solutions (weak solutions in the stochastic sense) for system \eqref{PE-system}. For notational simplicity, we define:
\begin{equation*}
    Q(g,f) = g\cdot \nabla_h f - \left(\int_0^z (\nabla_h \cdot  g) (\widetilde z)  d\widetilde z \right) \partial_z f, \ \ F(g) :=  f_0 g^{\perp} , \ \ B_k g = b_k\cdot\nabla g. 
\end{equation*}
After applying the hydrostatic Leray projection \eqref{e.062501}, the pressure terms are eliminated, and system \eqref{PE-system} can be rewritten as:
\begin{align}\label{e.052502}
    \begin{split}
        &d V + \big[\mathcal{P}Q( V, V) + \mathcal{P}F(V) + \Lambda^s V\big]dt = \sum_{k=1}^\infty\mathcal{P}B_kV \circ dW^k,
    \\
    &V(0) = V_0. 
    \end{split}
\end{align}
The corresponding It\^o form (see, for example, \cite{agresti2022stochastic} the conversion from Stratonovich noise to It\^o noise) of the equation is: 
\begin{align}\label{e.052503}
    \begin{split}
        &d V + \big[\mathcal{P}Q( V, V) + \mathcal{P}F(V) + \Lambda^s V\big]dt = \frac12\sum_{k=1}^{\infty}(\mathcal{P}B_k)^2V dt +  \sum_{k=1}^\infty\mathcal{P}B_kV dW^k,
    \\
    &V(0) = V_0. 
    \end{split}
\end{align}

\begin{definition}[Pathwise solution]\label{definition:pathwise-solution}
    Let the initial condition $V_0 \in L^2\left(\Omega; \Hb^{\sigma}\right)$ be $\Fc_0$-measurable.
    \begin{enumerate}
	\item A pair $(V, \eta)$ is called a \emph{local pathwise solution} of system \eqref{e.052502} if $\eta$ is a strictly positive $\Fb$-stopping time and $V(\cdot \wedge \eta)$ is a progressively measurable stochastic process satisfying $\Pb$-a.s.,
    \begin{equation*}
        V\left( \cdot \wedge \eta \right) \in L^2\left(\Omega; C\left([0, \infty), \Hb^{\sigma}\right)\right), 
    \end{equation*}
    and for every $t\geq 0$, the following identity holds in $\Hb$:
    \begin{align*}
        \begin{split}
        &V\left(t \wedge \eta\right) + \int_0^{t \wedge \eta} \big[\Pc Q(V,V)+\Pc F(V)
        +\Lambda^sV\big] dr \\
        = & V(0) + \frac12\sum_{k=1}^{\infty}\int_0^{t \wedge \eta}(\mathcal{P}B_k)^2V dr + \sum_{k=1}^\infty\int_0^{t \wedge \eta} \mathcal{P}B_kV dW_r^k.
        \end{split}
    \end{align*}
 
   \item A triple $\left(V, (\eta_n)_{n\geq 1},\xi\right)$ is called a \emph{maximal pathwise solution} if each pair $(V,\eta_n)$ is a local pathwise solution, $\eta_n$ is an increasing sequence of stopping times with $\lim\limits_{n\to \infty} \eta_n = \xi$  almost surely, and
   \begin{align*}
    \sup\limits_{t\in[0,\eta_n]} \|V\|_{\s} \geq n,  \quad \text{ on the set } \{\xi<\infty\}.
   \end{align*}
\end{enumerate}
\end{definition}

\begin{definition}[Martingale solution]\label{definition:martingale-solution}
    Let $T>0$ and $V_0 \in L^2\left(\Omega; \Hb^{\sigma}\right)$ be an $\Fc_0$-measurable random variable. A martingale solution to equation \eqref{e.052502} on $[0,T]$ is a quadruple $(\widetilde{V}_0,\widetilde{V},\widetilde{\Wb},\widetilde{\Sc})$ such that 
    \begin{enumerate}
        \item  $\widetilde{\Sc}=(\widetilde{\Omega},\widetilde{\Fc},\widetilde{\Fb},\widetilde{\Pb})$ is a stochastic basis,  over which $\widetilde{\Wb}$ is an $\widetilde{\Fb}$-adapted cylindrical Brownian motion with components $(\widetilde{W}^k)_{k\geq 0}$;
        
        \item $\widetilde{V}_0 \in L^2(\widetilde{\Omega},\Hb^{\sigma})$ has the same law as $V_0$;
        \item $\widetilde V$ is a progressively measurable process such that  
    \begin{equation*}
        \widetilde{V} \in L^2\left(\widetilde{\Omega}; C\left([0, T];\Hb^{\sigma}\right)\right), 
    \end{equation*}
    $\widetilde{\Pb}$-a.s., and for every $t\in[0,T]$, the following identity holds in $\Hb$:
    \begin{align*}
        \begin{split}
        &\widetilde{V}\left(t\right) + \int_0^{t} \big[\Pc Q(\widetilde{V},\widetilde{V})+\Pc F(\widetilde{V})
        +\Lambda^s\widetilde{V}\big] dr \\
        = &\widetilde{V}(0) + \frac12\sum_{k=1}^{\infty}\int_0^{t}(\mathcal{P}B_k)^2\widetilde{V} dr + \sum_{k=1}^\infty\int_0^{t} \mathcal{P}B_k\widetilde{V} d\widetilde{W}_r^k.
        \end{split}
    \end{align*}
        \end{enumerate}
\end{definition}

\subsection{Main result} The main result of this paper is stated in the following theorem.
\begin{theorem}\label{t.062501}
    Let $\sigma>3$, $s\in(1,2)$ and assume the noise coefficient satisfies \eqref{e.121003}. Then, for any $V_0\in L^2(\Omega,\Hb^{\sigma})$, there exists a maximal pathwise solution $(V,(\eta_n)_{n\geq1},\xi)$ to system \eqref{e.052502}. 
    
    The same result holds for the case $s=1$ if we assume additionally that the noise satisfies \eqref{e.121002} and that $\|V_0\|_{\s}< \frac{1}{C_0}$, where $C_0$ is a universal constant from Sobolev embeddings. 
\end{theorem}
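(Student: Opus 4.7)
The plan is a truncation--compactness--uniqueness scheme. First, introduce a smooth cutoff $\theta_R:[0,\infty)\to[0,1]$ supported in $[0,2R]$ with $\theta_R\equiv 1$ on $[0,R]$, and replace the nonlinear and noise terms in \eqref{e.052503} by their cutoff versions (e.g.\ $\theta_R(\|V\|_\sigma)\Pc Q(V,V)$ and $\theta_R(\|V\|_\sigma)\Pc B_k V$). For the cutoff system I would build solutions via Galerkin approximation (projecting onto the first $N$ Fourier modes), derive uniform-in-$N$ estimates, and pass to the limit using Prokhorov's theorem together with the Skorokhod--Jakubowski representation to obtain a martingale solution of the cutoff system. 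Once pathwise uniqueness is available, a Gy\"ongy--Krylov argument upgrades the martingale solution to a pathwise one, and the maximal solution is recovered by setting $\eta_n=\inf\{t:\|V(t)\|_\sigma\geq n\}$ and gluing across increasing cutoff levels $R\uparrow\infty$.

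\textbf{The key a priori estimate.} The central analytic step is the $H^\sigma$ energy estimate for the cutoff system. Applying It\^o's formula to $\|V\|_\sigma^2$ gives, modulo a local martingale $dM_t$,
\begin{align*}
d\|V\|_\sigma^2 + 2\|V\|_{\sigma+s/2}^2\,dt
&= -2\langle \Lambda^\sigma\Pc Q(V,V),\Lambda^\sigma V\rangle\,dt - 2\langle \Lambda^\sigma\Pc F(V),\Lambda^\sigma V\rangle\,dt \\
&\quad + \sum_k\Big[\langle \Lambda^\sigma\Pc(B_k)^2V,\Lambda^\sigma V\rangle + \|\Lambda^\sigma\Pc B_kV\|^2\Big]dt + dM_t.
\end{align*}
The nonlinear term is handled by standard horizontal-derivative commutator estimates together with the cancellation stemming from the divergence-free and hydrostatic structure, producing a bound of the form $\|V\|_\sigma^3$ (or $\|V\|_\sigma^2\|V\|_{\sigma+s/2}$ absorbable via Young) on cutoff. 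The bracketed noise sum is precisely the quantity controlled by \eqref{e.121101} and thus bounded by $C\|V\|_{\sigma+1/2}^2$, which sits \emph{above} the dissipation level $\|V\|_{\sigma+s/2}^2$. The regimes then split: in the subcritical case $s\in(1,2)$, interpolate $\|V\|_{\sigma+1/2}\leq \|V\|_\sigma^{1-1/s}\|V\|_{\sigma+s/2}^{1/s}$ and apply Young's inequality with conjugate exponents $(s/(s-1),s)$ to absorb a small multiple of $\|V\|_{\sigma+s/2}^2$ into the dissipation on the left, leaving a Gr\"onwall term in $\|V\|_\sigma^2$. In the critical case $s=1$, the interpolation has no margin, so I would invoke the smallness conditions \eqref{e.121002} and $\|V_0\|_\sigma<1/C_0$ together with a continuity argument to ensure the coefficient in front of $\|V\|_{\sigma+1/2}^2$ stays strictly below the dissipation constant on a small random interval.

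\textbf{Pathwise uniqueness via double cutoff.} For pathwise uniqueness, take two local pathwise solutions $V^{(1)},V^{(2)}$ issued from the same $V_0$, set $Z=V^{(1)}-V^{(2)}$, and estimate $\|Z\|_{\sigma-1/2}^2$ by It\^o's formula. Working half a derivative below the energy level compensates the loss of one horizontal derivative in the bilinear difference $\Pc Q(V^{(1)},Z)+\Pc Q(Z,V^{(2)})$, which is controlled by $(\|V^{(1)}\|_\sigma+\|V^{(2)}\|_\sigma)\|Z\|_{\sigma-1/2}^2$ up to terms absorbable by $\|Z\|_{\sigma-1/2+s/2}^2$. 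The noise contribution at this reduced regularity is handled by the cancellation analogous to \eqref{e.121101}, which in turn calls on the negative-Sobolev commutator estimate for $\Pc$ announced in the introduction. The double cutoff technique enters by simultaneously truncating both solutions at a common random time and using two different thresholds---one bounding $\|V^{(i)}\|_\sigma$ (and hence the coefficients) and another matching the solutions at the lower regularity---so that a Gr\"onwall argument on $\Eb\|Z\|_{\sigma-1/2}^2$ yields $Z\equiv 0$ up to the cutoff time.

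\textbf{Main obstacle.} The most delicate point is closing the energy estimate in the face of the derivative imbalance produced by the hydrostatic Leray projection. The commutator cancellation behind \eqref{e.121101} already reduces the naive contribution $\|V\|_{\sigma+1}^2$ to $\|V\|_{\sigma+1/2}^2$, but even the latter is at the edge of what the dissipation $\|V\|_{\sigma+s/2}^2$ can absorb. Quantitatively verifying that the interpolation constants, together with the smallness in \eqref{e.121002}, are compatible with the dissipation constant---particularly in the critical case and again at the reduced regularity level used in the uniqueness argument, where the margin shrinks further---is where the bulk of the technical work lies.
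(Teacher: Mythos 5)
Your overall scheme (truncate the nonlinearity, Galerkin, compactness/Skorokhod, pathwise uniqueness at a half-derivative-lower level, pass to maximal solution) matches the paper, and you correctly pinpoint the central estimate: the cancellation \eqref{e.121101} reduces the noise contribution to $\|V\|_{\sigma+1/2}^2$, which interpolates against the dissipation $\|V\|_{\sigma+s/2}^2$ when $s>1$ and is absorbed by smallness when $s=1$.

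There is, however, a genuine gap in where you place the cutoff. You propose to truncate the noise as $\theta_R(\|V\|_\sigma)\Pc B_kV$ alongside the nonlinearity, whereas the paper's cutoff system \eqref{e.052504} deliberately truncates \emph{only} the advection $Q$, leaving the noise and the It\^o--Stratonovich corrector untouched. This matters twice over. First, converting a truncated Stratonovich noise $\theta(V)\Pc B_kV\circ dW^k$ to It\^o form produces cross terms involving $\theta'$ and the Fr\'echet derivative of $\|V\|_\sigma$, so the corrector is no longer $\tfrac12\sum_k(\Pc B_k)^2V$, and the corrector/quadratic-variation cancellation behind \eqref{e.121101} no longer applies in the form stated. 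Second, and fatally for uniqueness: with truncated noise, the difference $Z=V_1-V_2$ sees $\theta_1\Pc B_kZ+(\theta_1-\theta_2)\Pc B_kV_2$ rather than $\Pc B_kZ$ alone. The cross term $(\theta_1-\theta_2)\Pc B_kV_2$ is Lipschitz in $\|Z\|_\sigma$, not in $\|Z\|_{\sigma-1/2}$; since you have (correctly) dropped half a derivative to handle the bilinear difference, the Gr\"onwall estimate for $\Eb\|Z\|_{\sigma-1/2}^2$ then reintroduces $\|Z\|_\sigma$ through the noise and cannot be closed. The paper avoids all of this: with un-truncated noise $\langle\Pc B_kZ,Z\rangle=0$ and $\langle(\Pc B_k)^2Z,Z\rangle=-\|\Pc B_kZ\|^2$ exactly, and the analogue of \eqref{e.121101} holds at the $\sigma-\tfrac12$ level as well.

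Your description of the ``double cutoff'' also misses the mechanism. It is not two different thresholds or two truncation times; the paper truncates with the square $\theta_\rho^2$ precisely so that in the uniqueness step one can apply the identity $a_1^2b_1-a_2^2b_2=(a_1-a_2)(a_1b_1+a_2b_2)+a_1a_2(b_1-b_2)$ to split the nonlinear difference into a Lipschitz-in-$\theta$ piece and a piece $\theta_\rho(V_1)\theta_\rho(V_2)\Pc(Q(V_1,V_1)-Q(V_2,V_2))$ carrying \emph{two} cutoff factors. One factor is spent making the loss-of-derivative commutator bound small (producing $\rho\|\Lambda^\sigma Z\|^2$, which is then absorbed), while the other factor remains to control the lower-order terms; a single $\theta_\rho$ would leave nothing over after the first use. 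Finally, a small remark: the paper cuts off on $\|V\|_{W^{1,\infty}}$ rather than $\|V\|_\sigma$; either works given $\sigma>5/2$, but the $W^{1,\infty}$ choice makes the factor $\rho$ in the nonlinear estimate appear cleanly rather than through a Sobolev constant.
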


\section{Proof of the main result}\label{section:inviscid}
This section is dedicated to proving Theorem \ref{t.062501}. We begin by deriving uniform estimates for the truncated cutoff system in Section~\ref{s.070701}. Next, we establish the existence of a martingale solution to the cutoff system in Section~\ref{s.121001} using standard compactness arguments. Finally, Theorem~\ref{t.062501} is proved at the end of Section~\ref{p.121001} after we demonstrate pathwise uniqueness. 

As mentioned in the introduction, a major difficulty arises from the singularity of the hydrostatic Leray projector and the fractional dissipation. In particular, obtaining the uniform estimates in Section~\ref{s.070701} requires carefully controlling commutator estimates involving the hydrostatic Leray projector. In addition, the usual cutoff scheme is inadequate for proving pathwise uniqueness in Section~\ref{p.121001} due to the nonlinear terms. To address this, a double cutoff scheme is introduced to overcome the associated challenges. Throughout this section, 
we assume that the noise coefficients satisfy the conditions outlined in Section~\ref{sec:stochasticprelim}. 

Let $\theta_\rho(x)\in C^\infty(\mathbb R)$ be a non-increasing cutoff function defined as 
\begin{equation}\label{eqn:rho}
    \theta_\rho(x)= \begin{cases}1, & \text { if }|x|\leq \frac{\rho}{2}, \\ 0 & \text { if }|x| >\rho.\end{cases}
\end{equation} 
Consider the cutoff system associated with the equation \eqref{e.052503}, 
\begin{align}\label{e.052504}
    \begin{split}
        &d V + \big[\theta_\rho^2\mathcal{P}Q( V, V) + \mathcal{P}F(V) + \Lambda^s V\big]dt = \frac12\sum_{k=1}^{\infty}(\mathcal{P}B_k)^2V +  \sum_{k=1}^\infty\mathcal{P}B_kV dW^k,
    \\
    &V(0) = V_0, 
    \end{split}
\end{align}
where we denote by $\theta_\rho=\theta_\rho(\|V\|_{W^{1,\infty}})$ for convenience. 

\subsection{Analysis of the Galerkin system}\label{s.070701}
In this subsection, we derive a uniform energy estimate for the Galerkin system associated with the cutoff system \eqref{e.052504}. For $\bk\in 2\pi\mathbb{Z}^3$, define
\begin{equation*}
\phi_{\bk} = \phi_{k_1,k_2,k_3} :=
\begin{cases}
\sqrt{2}e^{ i\left( k_1 x_1 + k_2 x_2 \right)}\cos( k_3 z) & \text{if} \;  k_3\neq 0,\\
e^{ i\left( k_1 x_1 + k_2 x_2 \right)} & \text{if} \;  k_3=0,  \label{phik}
\end{cases}
\end{equation*}
and
\begin{eqnarray*}
	&&\hskip-.28in
	\mathcal{E}:=  \left\{ \phi \in L^2(\mathbb{T}^3) \; | \; \phi= \sum\limits_{\bk\in 2\pi\mathbb{Z}^3} a_{\bk} \phi_{\bk}, \; a_{-k_1, -k_2,k_3}=a_{k_1,k_2,k_3}^{*}, \; \sum\limits_{\bk\in 2\pi\mathbb{Z}^3} |a_{\bk}|^2 < \infty \right\},
\end{eqnarray*}
where $a^*$ denotes the complex conjugate of $a$. Notice that $\mathcal{E}$ is a closed subspace of $L^2(\mathbb{T}^3)$, consisting of real-valued functions that are even in the $z$ variable. 

For any $n\in \N$, define the finite-dimensional subspace
\begin{eqnarray*}
	&&\hskip-.28in
	\mathcal{E}_n:=  \left\{ \phi \in L^2(\mathbb{T}^3)\; | \; \phi= \sum\limits_{|\bk|\leq n} a_{\bk} \phi_{\bk}, \; a_{-k_1, -k_2, k_3}=a_{k_1,k_2, k_3}^{*}  \right\},
\end{eqnarray*}
For any function $f\in L^2(\mathbb{T}^3)$, denote its Fourier coefficients by 
\begin{equation*}
f_{\bk} :=\int_{\mathbb{T}^3} f(\bx)\phi^*_{\bk}(\bx) d\bx,
\end{equation*}
and define the projection $P_n f := \sum_{|\bk|\leq n} f_{\bk} \phi_{\bk}$ for $n\in \N$. Then, $P_n$ is an orthogonal projection from $L^2(\mathbb{T}^3)$ to $\mathcal{E}_n$. 

For $n\in\mathbb N$, the Galerkin approximation of system \eqref{e.052504} at order $n$ is given by
\begin{align}\label{e.052505}
    \begin{split}
        &d V_n + \big[\theta_\rho^2P_n\mathcal{P}Q( V_n, V_n) + P_n\mathcal{P}F(V_n) + \Lambda^s V_n\big]dt = \frac12\sum_{k=1}^{\infty}P_n(\mathcal{P}B_k)^2V_n dt +  \sum_{k=1}^\infty P_n\mathcal{P}B_kV_n dW^k,
    \\
    &V_n(0) = P_nV_0, 
    \end{split}
\end{align}
where $\theta_\rho=\theta_\rho(\|V_n\|_{W^{1,\infty}})$. 
Since the coefficients are locally Lipschitz, the Galerkin system has a unique local solution. As the cancellation of the nonlinear term holds for the Galerkin system, the solution is indeed global. 

We now establish the following uniform energy estimate. 

\begin{proposition}\label{t.062502}
    Let $p\geq 2$, $T\geq 0$, $s\in[1,2)$, $\sigma>3$, and let $V_0\in L^p(\Omega,\Hb^{\sigma})$ be $\Fc_0$-measurable. In the case $s=1$, we additionally assume that the cutoff parameter satisfies $\rho<\frac{1}{2C_{\s}}$, where $C_{\s}$ is a constant arising from the Sobolev embedding. Let $V_n$ be the solution to the Galerkin system \eqref{e.052505}. Then for some universal constant $C$ independent of $n$, the following results hold:
    \begin{itemize}
        \item[(1)]Uniform energy bound:   
        \[\mathbb E \Big[\sup\limits_{r\in[0,T]} \|V_n(r)\|_{\s}^p +  \int_0^T \|V_n(r)\|_{\s}^{p-2}\|V_n(r)\|_{\sigma+\frac{s}{2}}^2 dr \Big]  \leq C(1+ \mathbb E \|V_0\|_{\s}^p).\]
        \item[(2)]For any $\alpha\in[0,\frac12)$, 
        \[ \Eb \left|\int_0^{\cdot}\sum_{k}P_n\Pc B_kV_ndW_r^k\right|_{W^{\alpha,p}(0,T;\Hb^{\s-1})}^p\leq C(1+ \mathbb E \|V_0\|_{\s}^p).\]
        \item[(3)] The following bound holds: 
        \[ \Eb\Big\|V_n - \int_0^{\cdot}\sum_kP_n\Pc B_k V_ndW_r^k\Big\|_{W^{1,2}(0,T;\Hb^{\s-2+\frac{s}{2}})}\leq C(1+ \mathbb E \|V_0\|_{\s}^2).\]
    \end{itemize}
\end{proposition}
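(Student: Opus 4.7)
The plan for (1) is to apply It\^o's formula to $(1+\|V_n\|_{\s}^2)^{p/2}$, which produces five classes of drift terms: the cutoff nonlinearity $\theta_\rho^2\langle \LL^\s P_n\Pc Q(V_n,V_n),\LL^\s V_n\rangle$, the Coriolis contribution, the negative dissipation $-p\|V_n\|_{\s+\frac{s}{2}}^2(1+\|V_n\|_\s^2)^{p/2-1}$, the It\^o-Stratonovich corrector $\sum_k\langle \LL^\s P_n(\Pc B_k)^2V_n,\LL^\s V_n\rangle$, and the quadratic variation $\sum_k\|\LL^\s P_n\Pc B_kV_n\|^2$ (plus lower-order cross terms when $p>2$). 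The nonlinearity is tamed by the cutoff $\theta_\rho$ together with the embedding $H^\s\hook W^{1,\infty}$ (valid since $\s>3$) and standard Kato-Ponce commutator estimates, yielding a drift bound of the form $C_\rho\|V_n\|_\s^2$. The Coriolis term is manifestly $\leq |f_0|\|V_n\|_\s^2$. The crux is to pair the corrector with the quadratic variation and invoke~\eqref{e.121101}, whose proof combines Lemma~\ref{l.072101} with the negative-norm bounds of Lemma~\ref{l.062701}, to obtain a combined bound $C_b\|V_n\|_{\s+\frac{1}{2}}^2$.

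Absorbing this $\|V_n\|_{\s+\frac{1}{2}}^2$ into the dissipation proceeds in two regimes. For $s\in(1,2)$, interpolation $\|V_n\|_{\s+\frac{1}{2}}^2\leq \epsilon\|V_n\|_{\s+\frac{s}{2}}^2 + C_\epsilon\|V_n\|_\s^2$ suffices for any noise size. For $s=1$ there is no slack; the smallness assumption~\eqref{e.121002} forces $C_b$ to be strictly less than the dissipation coefficient, while $\rho<\frac{1}{2C_{\s}}$ and $\|V_0\|_\s<\frac{1}{C_0}$ ensure the cutoff nonlinear contribution likewise fits. After absorption, BDG applied to the martingale and Gronwall's inequality yield (1). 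For (2), the fractional-time BDG inequality reduces the claim to $\Eb\int_0^T\bigl(\sum_k\|P_n\Pc B_kV_n\|_{\s-1}^2\bigr)^{p/2}dr$; since $B_k=b_k\cdot\nabla$ loses one derivative, this is controlled by $\|b\|_{\ell^2(\Nb,H^\s)}$ together with the $\Eb\sup_{[0,T]}\|V_n\|_\s^p$ bound from (1). For (3), subtracting the stochastic integral from $V_n$ leaves an absolutely continuous process whose time derivative equals the drift in~\eqref{e.052505}; each summand is estimated in $\Hb^{\s-2+\frac{s}{2}}$ using the $L^2(0,T;\Hb^{\s+\frac{s}{2}})$ bound from (1), noting that $\LL^sV_n\in H^{\s-\frac{s}{2}}$, $\Pc Q(V_n,V_n)\in H^{\s-1}$, $\Pc F(V_n)\in H^\s$, and $(\Pc B_k)^2V_n\in H^{\s-2+\frac{s}{2}}$, each embedding into $\Hb^{\s-2+\frac{s}{2}}$ for $s\in[1,2)$.

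The genuine obstacle is securing the cancellation~\eqref{e.121101}. Estimating the corrector and the quadratic variation separately would only produce $\|V_n\|_{\s+1}^2$, which no fractional dissipation with $s<2$ can absorb. The pseudo-differential cancellation used for Navier-Stokes with the standard Leray projector does not transfer, because the symbol of the hydrostatic projector $\Pc$ is singular along the entire $k_3$-axis rather than only at the origin. The resolution is to rewrite the sum as a combination of commutators with $\Pc$ and with $b_k\cdot\nabla$ and to distribute derivatives asymmetrically via Lemma~\ref{l.072101} and the negative-norm commutator estimates of Lemma~\ref{l.062701}, recovering the half-derivative that separates $\|V_n\|_{\s+1}^2$ from $\|V_n\|_{\s+\frac{1}{2}}^2$. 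This borderline half-derivative is precisely what makes $s>1$ subcritical and $s=1$ critical under a smallness condition, and identifies $s<1$ as supercritical.
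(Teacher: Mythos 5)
Your proposal is correct and follows essentially the same route as the paper: Itô's formula on the $H^\sigma$ energy, cancellation between the Itô--Stratonovich corrector and the quadratic variation via the commutator rewriting with the hydrostatic projector (Lemmas~\ref{l.072101} and~\ref{l.062701}, together with Lemmas~\ref{l.072102} and~\ref{l.072001} for the $[\LL^\sigma,B_k]$ and double-commutator pieces), interpolation absorption for $s\in(1,2)$ versus the noise smallness~\eqref{e.121002} for $s=1$, then BDG and Gr\"onwall, with (2) and (3) by the fractional BDG bound and a direct drift estimate exactly as you describe. One small slip: the condition $\|V_0\|_\sigma<\frac{1}{C_0}$ plays no role in this proposition --- the cutoff is already built into the truncated system, so only $\rho<\frac{1}{2C_\sigma}$ is needed here for $s=1$; the initial-data smallness enters only when deducing Theorem~\ref{t.062501}, to guarantee the stopping time in~\eqref{e.121001} is positive.
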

\begin{proof}
     Applying It\^o's formula to the functional $\|\LL^{\s}V_n\|^p$, by \eqref{e.052505} we have 
    \begin{align*}
        \begin{split}
            d\|\LL^{\s}V_n\|^{p} &= -p\langle \theta_\rho^2P_n\Pc Q(V_n,V_n)+P_n\Pc F(V_n)+\Lambda^s V_n,\Lambda^{2\sigma}V_n\rangle\|\LL^{\s}V_n\|^{p-2}dt \\
            &+\frac{p}{2}\sum_{k}\left(\langle P_n(\mathcal{P}B_k)^2V_n,\Lambda^{2\sigma}V_n \rangle+\|\LL^{\s}P_n\Pc B_kV_n\|^2\right)\|\LL^{\s}V_n\|^{p-2}dt\\
            &+\frac{p(p-2)}{2}\|\LL^{\s}V_n\|^{p-4}\sum_k\langle \LL^{2\s}V_n,P_n\Pc B_k V_n\rangle^2 dt+p\|\LL^{\s}V_n\|^{p-2}\sum_{k}\langle P_n\Pc B_k V_n, \LL^{2\sigma} V_n\rangle dW^k\\
            & = I_0 dt +I_1 dt +I_2 dt +I_3 d\Wb. 
        \end{split}
    \end{align*}

      We first consider 
    \begin{align*}
        \int_0^t I_1 dr = \int_0^t\frac{p}{2}\sum_{k}\left(\langle P_n(\mathcal{P}B_k)^2V_n,\Lambda^{2\s}V_n \rangle+\|\LL^{\s}P_n\Pc B_kV_n\|^2\right)\|\LL^{\s}V_n\|^{p-2}dr.
    \end{align*}
    Note that the self-adjoint operator $\Pc$ commutes with $\LL$ in the periodic case, $\Pc V_n = V_n$, and each $b_k$ in $B_k=b_k\cdot\nabla$ is divergence-free. Additionally, $P_n$ commutes with $\LL$ and satisfies $\|P_n\cdot\|\leq \|\cdot\|$. Therefore, one has 
    \begin{align}\label{e.070701}
        \begin{split}
            &\hspace{.5cm}\langle P_n(\mathcal{P}B_k)^2V_n,\Lambda^{2\s}V_n \rangle+\|\LL^{\s}P_n\Pc B_kV_n\|^2\\
            &\leq \langle  B_k \Pc B_k V_n,\LL^{2\s}V_n \rangle+ \langle \LL^{\s}B_k V_n, \LL^{\s}\Pc B_k V_n\rangle\\
            & = \langle  B_k [\Pc,  B_k] V_n,\LL^{2\s}V_n \rangle+ \langle  B_k B_k V_n,\LL^{2\s}V_n \rangle+ \langle \LL^{\s}B_k V_n, \LL^{\s}[\Pc,  B_k] V_n\rangle+\langle \LL^{\s}B_k V_n, \LL^{\s} B_k V_n\rangle\\
            & = \langle [\LL^{\s},B_k ][\Pc,  B_k] V_n,\LL^{\s}V_n \rangle + \langle \LL^{\s}[\Pc,  B_k] V_n,[\LL^{\s},B_k ] V_n \rangle\\
            &\qquad+ \langle  B_k B_k V_n,\LL^{2\s}V_n \rangle+\langle \LL^{\s}B_k V_n, \LL^{\s} B_k V_n\rangle\\
            &= \langle [\LL^{\s},B_k ][\Pc,  B_k] V_n,\LL^{\s}V_n \rangle + \langle \LL^{\s}[\Pc,  B_k] V_n,[\LL^{\s},B_k ] V_n \rangle\\
            &\qquad+ \langle  [\LL^{\s},B_k ] B_k V_n,\LL^{\s}V_n \rangle + \langle [\LL^{\s},B_k ] V_n, \LL^{\s} B_k V_n\rangle\\
            &= \langle [\LL^{\s},B_k ][\Pc,  B_k] V_n,\LL^{\s}V_n \rangle + \langle \LL^{\s}[\Pc,  B_k] V_n,[\LL^{\s},B_k ] V_n \rangle\\
            &\qquad+ \langle  [[\LL^{\s},B_k ],B_k]V_n,\LL^{\s}V_n \rangle + \langle [\LL^{\s},B_k ] V_n, [\LL^{\s},B_k ] V_n\rangle.\\
        \end{split}
    \end{align}
Since $\s>\frac52$, it follows from Lemma \ref{l.071701} and the Sobolev embedding that
\begin{align}\label{e.072102}
    \begin{split}
        \|[\LL^{\s},B_k ] V_n\|\leq \sum_j\|[\LL^{\s},b_k^j](\partial_jV_n)\|
        &\lesssim \|\nabla b_k\|_{L^{\infty}}\|\nabla V_n\|_{\s-1}+\|\LL^{\s} b_k\|\|\nabla V_n\|_{L^{\infty}}\\
        \lesssim \|b_k\|_{\s}\|V_n\|_{\s},
    \end{split}
\end{align}
which leads to 
\begin{align*}
    \langle [\LL^{\s},B_k ] V_n, [\LL^{\s},B_k ] V_n\rangle\lesssim \|b_k\|_{\s}^2\|V_n\|_{\s}^2. 
\end{align*}
Thanks to Lemma~\ref{l.072001}, we also have 
\begin{align*}
    \langle  [[\LL^{\s},B_k ],B_k]V_n,\LL^{\s}V_n \rangle\leq \|[[\LL^{\s},B_k ],B_k]V_n\|\|\LL^{\s}V_n \|\lesssim \|b_k\|_{\s+1}^2\|V_n\|_{\s}^2. 
\end{align*}
Applying Lemmas \ref{l.072102} and \ref{l.072101} we obtain
\begin{align*}
    \begin{split}
        \langle \LL^{\s}[\Pc,  B_k] V_n,[\LL^{\s},B_k ] V_n \rangle&\leq \|\LL^{\s-\frac12}[\Pc,  B_k] V_n\|\|\LL^{\frac12}[\LL^{\s},B_k ] V_n \|\\
        &\lesssim \left(\|b_k\|_{\s+\frac12}\|V_n\|_{\s-\frac12}+\|\partial_zb_k^h\|_{\s-\frac32}\|V_n\|_{\s+\frac12}\right)\left(\|b_k\|_{\s}\|V_n\|_{\s+\frac12}+\|b_k\|_{\s+3}\|V_n\|_{\s}\right)\\
        &\lesssim \|\partial_zb_k^h\|_{\s-\frac32}\|b_k\|_{\s}\|V_n\|_{\s+\frac12}^2+\|b_k\|_{\s+3}^2\|V_n\|_{\s}\|V_n\|_{\s+\frac12}+\|b_k\|_{\s+3}^2\|V_n\|_{\s}^2, 
    \end{split}
\end{align*}
where $b_k^h=(b_k^1,b_k^2)$. By Lemma \ref{l.072101}, we further have 
\begin{align*}
    \langle [\LL^{\s},B_k ][\Pc,  B_k] V_n,\LL^{\s}V_n \rangle &\leq \|\LL^{-\frac12}[\LL^{\s},B_k ][\Pc,  B_k] V_n\|\|\LL^{\s+\frac12}V_n\|\\
    &\lesssim \left(\|b_k\|_{\s+3}^2\|\|V_n\|_{\s-\frac12}+\|b_k\|_{\s+3}\|\partial_zb_k^h\|_{\s-\frac32}\|V_n\|_{\s+\frac12}\right)\|V_n\|_{\s+\frac12}\\
    &\lesssim \|b_k\|_{\s+3}^2\|\|V_n\|_{\s}\|V_n\|_{\s+\frac12} + \|b_k\|_{\s+3}\|\partial_zb_k^h\|_{\s-\frac32}\|V_n\|_{\s+\frac12}^2. 
\end{align*}
Combining the estimates above, it follows that
\begin{align}\label{e.121004}
    \begin{split}
        &\langle P_n(\mathcal{P}B_k)^2V_n,\Lambda^{2\s}V_n \rangle+\|\LL^{\s}P_n\Pc B_kV_n\|^2\\
        &\qquad \lesssim \|b_k\|_{\s+3}\|\partial_zb_k^h\|_{\s-\frac32}\|V_n\|_{\s+\frac12}^2+\|b_k\|_{\s+3}^2\|V_n\|_{\s}\|V_n\|_{\s+\frac12}+\|b_k\|_{\s+3}^2\|V_n\|_{\s}^2. 
    \end{split}
\end{align}
For $s>1$, by interpolation and Young's inequalities, and noting that $V_n$ has zero mean, one has $$\|V_n\|_{\s+\frac12}^2\lesssim\|\LL^{\s+\frac12}V_n\|^2\leq \varepsilon\|\LL^{\s+\frac{s}{2}}V_n\|^2+C_{\varepsilon}\|\LL^{\s}V_n\|^2.$$ 
Consequently,
\begin{align}
    \sum_k\left(\langle P_n(\mathcal{P}B_k)^2V_n,\Lambda^{2\s}V_n \rangle+\|\LL^{\s}P_n\Pc B_kV_n\|^2\right)
    \leq \frac14\|\LL^{\s+\frac{s}{2}}V_n\|^2+C\|b\|_{\ell^2(\Nb,H^{\s+3})}^2\|\LL^{\s}V_n\|^2. \label{s>1}
\end{align}
For $s=1$, we impose the assumption that
\[\sum_k\|b_k\|_{\s+3}\|\partial_zb_k^h\|_{\s-\frac32}\leq \frac{1}{5C},\]
where $C$ is a universal constant. Note that this assumption is automatically satisfied when $\partial_z b_k^h=0$, i.e., $b_k^h$ is independent of the $z$ variable. Under this assumption, from \eqref{e.121004} we have
\begin{align}
    &\sum_k\left(\langle P_n(\mathcal{P}B_k)^2V_n,\Lambda^{2\s}V_n \rangle+\|\LL^{\s}P_n\Pc B_kV_n\|^2\right)\nonumber\\
    &\qquad \leq C \|\LL^{\s+\frac12}V_n\|^2\sum_k\|b_k\|_{\s+3}\|\partial_zb_k^h\|_{\s-\frac32} + \|b\|_{\ell^2(\Nb,H^{\s+3})}^2\left(\|V_n\|_{\s}\|V_n\|_{\s+\frac12}+\|V_n\|_{\s}^2\right)\nonumber\\
    &\leq \frac14\|\LL^{\s+\frac12}V_n\|^2+C\left(1+\|b\|_{\ell^2(\Nb,H^{\s+3})}^4\right)\|\LL^{\s}V_n\|^2. \label{s=1}
\end{align}
Thus, under the given condition, for $1\leq s<2$, we derive
\begin{align}\label{e.072103}
    \int_0^t I_1 dr \leq \frac14 \int_0^t\|\LL^{\s+\frac{s}{2}}V_n\|^2\|\LL^{\s}V_n\|^{p-2}dr + C_b\int_0^t\|V_n\|_{\s}^pdr.
\end{align}

To estimate $I_2$ and $I_3$, note that since each $b_k$ is divergence-free,  we have
\begin{align*}
   \langle \LL^{2\s}V_n,P_n\Pc B_k V_n\rangle
    &= \langle \LL^{\s}V_n, \LL^{\s} B_k  V_n\rangle = \langle \LL^{\s}V_n, \LL^{\s} B_k  V_n\rangle -\langle \LL^{\s}V_n,  B_k \LL^{\s}V_n\rangle\\
    & = \langle \LL^{\s}V_n, [\LL^{\s}, B_k] V_n\rangle\leq \|b_k\|_{\s+2}\|V_n\|_{\s}^2,
\end{align*}
where the last inequality follows from estimate \eqref{e.072102}. Consequently, we obtain 
\begin{align*}
    \begin{split}
        \int_0^t I_2 dr =\frac{p(p-2)}{2} \int_0^t \|\LL^{\s}V_n\|^{p-4}\sum_k\langle \LL^{2\s}V_n,P_n\Pc B_k V_n\rangle^2dr\leq C_b\int_0^t  \|V_n\|_{\sigma}^p dr.
    \end{split}
\end{align*}
Using the Burkholder-Davis-Gundy inequality, we then estimate $I_3$:
\begin{align*}
    \begin{split}
        \Eb\sup_{r\in[0,t]}\left|\int_0^r I_3 d\Wb\right| &= p\Eb\sup_{r\in[0,t]}\left|\int_0^r \|\LL^{\s}V_n\|^{p-2}\sum_{k}\langle P_n\Pc B_k V_n, \LL^{2\sigma} V_n\rangle dW^k \right|\\
        &\leq C_p \Eb\left(\int_0^t\|V_n\|_{\s}^{2(p-2)}\sum_k\langle  B_k V_n, \LL^{2\sigma} V_n\rangle^2dr\right)^{\frac12}\\
        &= C_p \Eb\left(\int_0^t\|V_n\|_{\s}^{2(p-2)}\sum_k\langle  [\LL^{\s},B_k] V_n, \LL^{\sigma} V_n\rangle^2dr\right)^{\frac12}\\
        &\leq C_b\Eb\left(\int_0^t\|V_n\|_{\s}^{2p}dr\right)^{\frac12}\\
        &\leq \frac14\Eb\sup_{r\in[0,t]}\|V_n\|_{\s}^{p} + 4C_b^2 \Eb\int_0^t  \|V_n(r)\|_{\sigma}^p dr. 
    \end{split}
\end{align*}
Note that the constant $C_b$ above depends on $b$ through $\|b\|_{\ell^2(\Nb, H^{\s+2})}$. 

To estimate $I_0$, we first use Lemma \ref{l.071701} and the cutoff function $\theta$ to obtain 
\begin{align}\label{e.071701}
    \begin{split}
        &-p\int_0^t\theta_\rho^2\langle P_n\Pc Q(V_n,V_n), \LL^{2\s}V_n\rangle \|\LL^{\s}V_n\|^{p-2}dr
        \\
        =&-p\int_0^t\theta_\rho^2\langle P_n\Pc \LL^{\s-\frac12} Q(V_n,V_n), \LL^{\s+\frac12}V_n\rangle \|\LL^{\s}V_n\|^{p-2}dr\\
        \leq& pC_{\s}\int_0^t\theta_\rho^2\|V_n\|_{W^{1,\infty}}\|\LL^{\s+\frac12}V_n\|^2\|\LL^{\s}V_n\|^{p-2}dr\\
        \leq& p C_{\s} \rho\int_0^t\|\LL^{\s+\frac12}V_n\|^2\|\LL^{\s}V_n\|^{p-2}dr. 
    \end{split}
\end{align}
Besides, we have
$
    \langle P_n\Pc F(V_n), \LL^{2\s}V_n\rangle  = 0
$
since $F(V_n)$ is orthogonal to $V_n$. 
Therefore, when $s=1$ one has 
\begin{align*}
    \begin{split}
        \int_0^t I_0 dr\leq -p\int_0^t(1-C_{\sigma}\rho)\|\LL^{\s+\frac12}V_n\|^2\|\LL^{\s}V_n\|^{p-2}dr. 
    \end{split}
\end{align*}
For $1<s<2$,  applying interpolation and Young's inequalities, we deduce  $$\|\LL^{\s+\frac12}V_n\|^2\leq \varepsilon\|\LL^{\s}V_n\|^2+C_\varepsilon\|\LL^{\s+\frac{s}{2}}V_n\|^2.$$ 
Then we can bound \eqref{e.071701} to obtain 
\begin{align*}
    \begin{split}
        \int_0^t I_0dr &\leq pC_{\sigma}\rho\int_0^t\|\LL^{\s+\frac{1}{2}}V_n\|^2\|\LL^{\s}V_n\|^{p-2}dr - p\int_0^t\|\LL^{\s+\frac{s}{2}}V_n\|^2\|\LL^{\s}V_n\|^{p-2}dr\\
        &\leq - \frac p2\int_0^t\|\LL^{\s+\frac{s}{2}}V_n\|^2\|\LL^{\s}V_n\|^{p-2}dr+C_{p,\sigma,\rho}\int_0^t\|V_n\|_{\s}^{p}dr.
    \end{split}
\end{align*}

Summarizing the above estimates for $I_0$ to $I_3$, we obtain for any $t\in[0,T]$:
\begin{align*}
    \Eb\sup_{r\in[0,t]}\|\LL^{\s}V_n(r)\|^p +\Eb \int_0^t\|\LL^{\s+\frac{s}{2}}V_n\|^2\|\LL^{\s}V_n\|^{p-2}dr\leq C\left(\Eb\|\LL^{\s}V_n(0)\|^p + \Eb\int_0^t\|V_n\|_{\s}^pdr\right). 
\end{align*}
Since $V_n$ has zero mean, this leads to 
\begin{align*}
    \Eb\sup_{r\in[0,t]}\|V_n(r)\|_{\s}^p +\Eb \int_0^t\|V_n\|_{\s+\frac{s}{2}}^2\|V_n\|_{\s}^{p-2}dr\leq C\left(\Eb\|V_n(0)\|_{\s}^p + \Eb\int_0^t\|V_n\|_{\s}^pdr\right). 
\end{align*}
Applying Gr\"onwall's inequality, we conclude
\begin{align*}
    \begin{split}
        \Eb\left[\sup_{r\in[0,T]}\|V_n\|_{\s}^p+ \int_0^T\|V_n\|_{\s+\frac{s}{2}}^2\|V_n\|_{\s}^{p-2}dr\right]\leq C(1+\Eb\|V_0\|_{\s}^p). 
    \end{split}
\end{align*}
This proves (1). 

Next, by the Burkholder-Davis-Gundy inequality, for $\alpha\in [0,\frac12)$ and $p\geq 2$, one has 
\begin{align*}
    \begin{split}
        \Eb \left|\int_0^{\cdot}\sum_{k}P_n\Pc B_kV_ndW_r^k\right|_{W^{\alpha,p}(0,T;\Hb^{\s-1})}^p
        &\leq C_p\Eb\int_0^T\left(\sum_{k}\|B_kV_n\|_{\s-1}^2\right)^{\frac{p}{2}}dr\\
        &\leq \Eb C_p\int_0^T\|V_n\|_{\s}^p\left(\sum_{k}\|b_k\|_{\s-1}^2\right)^{\frac{p}{2}}dr\\
        &\leq C_{T,b,p} \Eb\sup_{r\in[0,T]}\|V_n\|_{\s}^p.
    \end{split}
\end{align*}
Combining this result with (1), we conclude (2).

Finally, in view of \eqref{e.052505}, we have 
\begin{align*}
    \begin{split}
        V_n(t) &- \int_0^t\sum_kP_n\Pc B_k V_ndW_r^k  \\
        &= P_nV_0 - \int_0^t\big[\theta_\rho^2P_n\mathcal{P}Q( V_n, V_n) + P_n\mathcal{P}F(V_n) + \Lambda^s V_n\big]dr + \frac12 \int_0^t \sum_kP_n(\Pc B_k)^2V_n dr.  
    \end{split}
\end{align*}
Since $\s>3$ and $s\in[1,2)$, one has $\s-2+\frac{s}{2}>\frac32$. Hence 
\begin{align*}
    \|\theta_\rho^2P_n\mathcal{P}Q( V_n, V_n) + P_n\mathcal{P}F(V_n) + \Lambda^s V_n\|_{\s-2+\frac{s}{2}}\leq C(1+\|V_n\|_{\s+\frac{s}{2}}^2),
\end{align*}
and 
\begin{align*}
    \|P_n(\Pc B_k)^2V_n\|_{\s-2+\frac{s}{2}}\leq C\|b_k\|_{\s-2+\frac{s}{2}}\|\Pc B_kV_n\|_{\s-1+\frac{s}{2}}\leq C\|b_k\|_{\s-1+\frac{s}{2}}^2\|V_n\|_{\s+\frac{s}{2}}. 
\end{align*}
We thus obtain 
\begin{align*}
    \begin{split}
        \Eb\Big\|V_n(t) &- \int_0^t\sum_kP_n\Pc B_k V_ndW_r^k\Big\|_{W^{1,2}(0,T;\Hb^{\s-2+\frac{s}{2}})}\\
        &\leq C\Eb\left[1+\|V_0\|_{\s}^2+\sup_{r\in[0,T]}\|V_n\|_{\s}^2+\int_0^T\|V_n\|_{\s+\frac{s}{2}}^2dr\right],
    \end{split}
\end{align*}
which implies (3) by utilizing (1) with $p=2$.

\end{proof}
\subsection{Compactness and martingale solutions}\label{s.121001}
In this subsection, we establish the existence of martingale solutions to system \eqref{e.052504}, leveraging the energy estimates obtained in Proposition~\ref{t.062502}. Before proceeding with the proof, we outline the necessary settings.

Recall that we have fixed a stochastic basis $\mathcal{S}=(\Omega, \mathcal{F}, \mathbb{F}, \mathbb{P})$. Define the path space 
\[\mathcal{X}= \Hb^{\s}\times L^2\left(0, T;  \Hb^{\s}\right) \cap C\left(\left[0, T\right]; \Hb^{\s-2}\right)\times C\left(\left[0, T\right]; \Uc\right). \]
Given any random initial data $V_0\in L^2(\Omega,\Hb^{\s})$, we let $\mu_0^n$ be the law of $P_nV_0$, and $\mu_V^n$ be the law of the corresponding solution $V_n$ to the approximating system \eqref{e.052505} with initial data $V_n(0) = P_nV_0 $, and also $\mu_{\mathbb{W}}$ the law of the cylindrical Wiener process on $C\left(\left[0, T\right]; \Uc\right)$. Define  $\mu^n =\mu_0^n\otimes \mu_V^n\otimes \mu_{\mathbb{W}} $ as their joint law in the path space $\mathcal{X}$. 

\begin{proposition}\label{p.070601}
    Assume the same conditions as in Proposition~\ref{t.062502} and  let $p>2$. Then there exists a stochastic basis $(\widetilde{\Omega},\widetilde{\mathcal{F}}, \widetilde{\mathbb{F}}, \widetilde{\mathbb{P}})$ and an $\mathcal{X}$-valued random variable $(\widetilde{V}_0,\widetilde{V},\widetilde{\mathbb W})$ over $\widetilde{\Omega}$ such that
    $\widetilde{V}$, adapted to the filtration $\widetilde{\mathbb{F}}$,
    is a solution to \eqref{e.052504} on $[0,T]$, with driving noise $\widetilde{\mathbb W}$ and initial data $\widetilde{V}_0$ having the same distribution as $V_0$. In short, $(\widetilde{V}_0,\widetilde{V},\widetilde{\Wb})$ over the stochastic basis $(\widetilde{\Omega},\widetilde{\mathcal{F}}, \widetilde{\mathbb{F}}, \widetilde{\mathbb{P}})$ is a martingale solution to \eqref{e.052504} on $[0,T]$.  Moreover, $\widetilde{V}$ satisfies 
    \begin{equation}
        \label{eq:martingale.solution.approx.regularity}
        \widetilde{V} \in L^2\left( \widetilde \Omega; C\left( [0, T]; \Hb^{\s}\right)\cap L^2\left( 0, T; \Hb^{\s+\frac{s}{2}} \right) \right).
    \end{equation}
\end{proposition}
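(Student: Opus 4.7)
The plan is to execute the standard Skorokhod compactness and martingale identification argument, using the uniform Galerkin bounds from Proposition~\ref{t.062502} as the sole input. First I would establish tightness of the family of laws $\{\mu^n\}_{n\geq 1}$ on $\mathcal{X}$. The initial-data marginal is tight since $P_nV_0 \to V_0$ strongly in $L^2(\Omega;\Hb^{\sigma})$, and the Wiener marginal is the fixed law of $\Wb$. For the velocity marginal, I would apply an Aubin--Lions--Simon-type argument after splitting $V_n = M_n + R_n$, where $M_n = \int_0^\cdot \sum_k P_n\Pc B_k V_n\, dW_r^k$ is the stochastic integral and $R_n$ the drift. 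Part (2) of Proposition~\ref{t.062502} bounds $M_n$ uniformly in $W^{\alpha,p}(0,T;\Hb^{\sigma-1})$ with $\alpha p > 1$; part (3) bounds $R_n$ uniformly in $W^{1,2}(0,T;\Hb^{\sigma-2+s/2})$; and part (1) gives the uniform $L^\infty(0,T;\Hb^\sigma)\cap L^2(0,T;\Hb^{\sigma+s/2})$ bound. Combined with the compact embeddings $\Hb^{\sigma+s/2}\hookrightarrow \Hb^\sigma \hookrightarrow \Hb^{\sigma-2+s/2}\hookrightarrow \Hb^{\sigma-2}$, these bounds yield compactness in $L^2(0,T;\Hb^\sigma)\cap C([0,T];\Hb^{\sigma-2})$, and tightness on $\mathcal{X}$ follows by Chebyshev.

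Second, by Prokhorov's theorem and the classical Skorokhod representation theorem (applied on the Polish space $\mathcal{X}$), I extract a subsequence and construct a new stochastic basis $\widetilde{\Sc}=(\widetilde{\Omega},\widetilde{\Fc},\widetilde{\Fb},\widetilde{\Pb})$ carrying $(\widetilde{V}_0^n,\widetilde{V}_n,\widetilde{\Wb}^n)$ with the same joint laws as $(P_nV_0,V_n,\Wb)$ and converging $\widetilde{\Pb}$-a.s.\ in $\mathcal{X}$ to a triple $(\widetilde{V}_0,\widetilde{V},\widetilde{\Wb})$. I would take $\widetilde{\Fb}$ to be the usual augmentation of the filtration generated by these processes and verify, via a Lévy-type characterization, that $\widetilde{\Wb}$ is a cylindrical $\widetilde{\Fb}$-Wiener process.

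Third, I pass to the limit in the Galerkin equation \eqref{e.052505} tested against smooth test functions. The linear terms $\Lambda^s\widetilde{V}_n$ and $P_n\Pc F(\widetilde{V}_n)$ converge from a.s.\ strong convergence in $L^2(0,T;\Hb^\sigma)$; the cutoff nonlinearity $\theta_\rho^2 P_n\Pc Q(\widetilde{V}_n,\widetilde{V}_n)$ converges using continuity of $\theta_\rho$ in its $W^{1,\infty}$-argument (secured by the embedding $\Hb^\sigma \hookrightarrow W^{1,\infty}$ for $\sigma > 3$) together with the bilinear product estimate for $Q$; and the It\^o corrector $\frac12\sum_k P_n(\Pc B_k)^2 \widetilde{V}_n$ converges using the summability assumption \eqref{e.121003}. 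Convergence of expectations in each term is upgraded from a.s.\ convergence via Vitali, using the uniform $p$-th moment bounds with $p > 2$ from Proposition~\ref{t.062502}(1). The stochastic integral is handled by the standard martingale identification procedure: one shows that the limiting candidate $\widetilde{M}(t) := \widetilde{V}(t)-\widetilde{V}_0-\int_0^t[\Pc Q(\widetilde V,\widetilde V)+\Pc F(\widetilde V)+\Lambda^s \widetilde V-\tfrac12\sum_k(\Pc B_k)^2\widetilde V]\,dr$ is a square-integrable $\widetilde{\Fb}$-martingale whose quadratic variation equals $\int_0^\cdot \sum_k \|\Pc B_k\widetilde{V}\|^2\,dr$ and whose cross-variation with $\widetilde{W}^k$ equals $\int_0^\cdot \Pc B_k \widetilde{V}\,dr$, thereby identifying $\widetilde{M}=\sum_k\int_0^\cdot \Pc B_k\widetilde{V}\,d\widetilde{W}_r^k$. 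Finally, \eqref{eq:martingale.solution.approx.regularity} follows by combining a.s.\ convergence with Fatou's lemma and weak lower semicontinuity applied to Proposition~\ref{t.062502}(1) (the hypothesis $p>2$ ensures uniform integrability of $\sup_{r\in[0,T]}\|V_n(r)\|_\sigma^2$). The main obstacle is the martingale identification, which requires that the Hilbert--Schmidt-type operator $V\mapsto \sum_k \Pc B_k V$ be well-defined into a Sobolev space compatible with the a.s.\ convergence topology; this is precisely what \eqref{e.121003} delivers.
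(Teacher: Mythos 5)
Your compactness step and Skorokhod construction are essentially identical to the paper's, and your martingale-identification route for the stochastic term is a legitimate alternative to the paper's approach: the paper instead invokes the Kurtz--Protter convergence theorem (Theorem~4.2 of \cite{kurtz1996weak}) to pass the Galerkin stochastic integral to the limit directly, using that $\widetilde V_{n_j}\to\widetilde V$ in $C([0,T];\Hb^{\sigma-2})$ together with $\widetilde{\Wb}_{n_j}\to\widetilde{\Wb}$ in $C([0,T];\Uc)$ gives convergence of the Itô integrals in probability. The martingale-problem formulation you describe is classical and would also close the identification step; it trades the appeal to Kurtz--Protter for cross-variation computations, and each requires a verification that $\widetilde{\Wb}$ is a cylindrical Wiener process for the chosen filtration.

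The genuine gap is in your last step. You claim \eqref{eq:martingale.solution.approx.regularity} from ``a.s.\ convergence, Fatou's lemma, and weak lower semicontinuity applied to Proposition~\ref{t.062502}(1).'' That argument yields only
\[
\widetilde V\in L^2\bigl(\widetilde\Omega;\,L^\infty(0,T;\Hb^{\sigma})\cap L^2(0,T;\Hb^{\sigma+\frac{s}{2}})\bigr),
\]
not $\widetilde V\in L^2(\widetilde\Omega;C([0,T];\Hb^{\sigma}))$: the a.s.\ convergence in $\mathcal{X}$ only takes place in $C([0,T];\Hb^{\sigma-2})$, which is two derivatives short of the target space, and $L^\infty$-in-time with values in $\Hb^\sigma$ does not give strong continuity of sample paths in $\Hb^\sigma$. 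The paper closes exactly this gap via two additional steps that you omit. First, it upgrades the $C([0,T];\Hb^{\sigma-2})$ convergence plus the $L^\infty(0,T;\Hb^\sigma)$ bound to \emph{weak} continuity of $t\mapsto\widetilde V(t)$ in $\Hb^\sigma$ by a density argument in $\Hb^{-\sigma}$. Second, it proves continuity of $t\mapsto\|\widetilde V(t)\|_\sigma$ by a Friedrichs mollification argument: since the Itô--Stratonovich corrector $\frac12\sum_k(\Pc B_k)^2\widetilde V$ contains second-order derivatives but $\widetilde V$ is only in $L^2(0,T;\Hb^{\sigma+\frac{s}{2}})$ with $s<2$, Itô's formula for $\|\widetilde V\|_\sigma^2$ cannot be applied directly. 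One instead applies Itô's formula to $\|\LL^\sigma J_\varepsilon\widetilde V\|^2$, derives bounds on the increments uniformly in $\varepsilon$ (the commutator cancellation \eqref{e.070701} is reused with $A^\sigma=\LL^\sigma J_\varepsilon$ in place of $\LL^\sigma$, and Friedrichs' lemma controls $[J_\varepsilon,B_k]$), and lets $\varepsilon\to0$. Weak continuity together with norm continuity then gives strong continuity in the Hilbert space $\Hb^\sigma$. Without this mollification step your argument does not establish the $C([0,T];\Hb^\sigma)$ regularity asserted in the proposition.
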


\begin{proof}
    The proof is divided into two main steps. First, we establish the compactness of $\mu^n$ to obtain a limit as a random variable with values in $\mathcal{X}$ using the Skorokhod theorem. Then, we show that the limit gives a martingale solution to \eqref{e.052504}.

    \medskip

    \noindent\textbf{Step 1: Compactness.} By Lemma~\ref{lemma:aubin-lions}, the embedding
    \[L^2(0,T;\Hb^{\s+\frac{s}{2}})\cap W^{\frac14,2}(0,T;\Hb^{\s-1})\subset L^2(0,T;\Hb^{\s})\]
    is compact. In addition, by choosing $\alpha\in (0,1/2)$ such that $\alpha p>1$, both spaces $W^{1,2}(0,T;\Hb^{\s-2+\frac{s}{2}})$ and $W^{\alpha,p}(0,T;\Hb^{\s-2+\frac{s}{2}})$ are compactly embedded in $C([0,T];\Hb^{\s-2})$. These compact embeddings imply the tightness of the sequence $\lbrace \mu^n \rbrace_{n\geq 1}$ over $\mathcal{X}$. Indeed, if we let 
    \[B_R^1 = \left\{V\in L^2(0,T;\Hb^{\s+\frac{s}{2}})\cap W^{\frac14,2}(0,T;\Hb^{\s-1}): \|V\|_{L^2(0,T;\Hb^{\s+\frac{s}{2}})}^2+\|V\|_{W^{\frac14,2}(0,T;\Hb^{\s-1})}^2\leq R^2 \right\},\]
    and $B_R^2=B_R^{2,1}+B_R^{2,2}$, where $B_R^{2,1}$ and $B_R^{2,2}$ are the closed balls in $W^{1,2}(0,T;\Hb^{\s-2+\frac{s}{2}})$ and $W^{\alpha,p}(0,T;\Hb^{\s-2+\frac{s}{2}})$ centered at $0$ with radius $R$, then $B_R^1\cap B_R^2$ is precompact in $L^2(0,T;\Hb^{\s})\cap C([0,T];\Hb^{\s-2})$. By Markov's inequality and Proposition~\ref{t.062502}, one has
    \begin{align*}
        \mu_V^n\left((B_R^1)^c\right)&\leq \Pb\left(\|V_n\|_{L^2(0,T;\Hb^{\s+\frac{s}{2}})}\geq \frac{R}{\sqrt{2}}\right)+\Pb\left(\|V_n\|_{W^{\frac14,2}(0,T;\Hb^{\s-1})}\geq \frac{R}{\sqrt{2}}\right)\\
        &\leq \frac{C(1+\Eb\|V_0\|_{\s}^2)}{R},
    \end{align*}
    and 
    \begin{align*}
        \mu_V^n\left((B_R^2)^c\right)&\leq \Pb\left(\left\|V_n-\int_0^{\cdot}\sum_kP_n\Pc B_k V_ndW_r^k\right\|_{W^{1,2}(0,T;\Hb^{\s-2+\frac{s}{2}})}\geq R\right)\\
        &\qquad +\Pb\left(\left\|\int_0^{\cdot}\sum_kP_n\Pc B_k V_ndW_r^k\right\|_{W^{\alpha,p}(0,T;\Hb^{\s-2+\frac{s}{2}})}^p\geq R^p\right)\\
        &\leq \frac{C(1+\Eb\|V_0\|_{\s}^p)}{R}. 
    \end{align*}
    Thus the sequence  $\lbrace \mu^n \rbrace_{n\geq 1}$ is tight over $\mathcal{X}$. By Prokhorov's theorem, the sequence is precompact. Applying the Skorokhod theorem yields the existence of a probability space $(\widetilde{\Omega}, \widetilde{\mathcal{F}}, \widetilde{\mathbb{P}})$, a subsequence $n_j \rightarrow \infty$ as $j \rightarrow \infty$, and a sequence of $\mathcal{X}$-valued random variables $\left(\widetilde{V}^0_{n_j},\widetilde{V}_{n_j}, \widetilde{\mathbb W}_{n_j}\right)$ such that:
    \begin{itemize}
        \item The sequence converges almost surely under $\widetilde{\mathbb P}$ to $\left(\widetilde{V}_0,\widetilde{V}, \widetilde{\mathbb W}\right)$ in $\mathcal{X}$;
        \item Each triple $\left(\widetilde{V}^0_{n_j},\widetilde{V}_{n_j}, \widetilde{\mathbb W}_{n_j}\right)$ is a martingale solution to \eqref{e.052505} for $n=n_j$ with initial data $\widetilde{V}^0_{n_j}$.
        \item The law of $\widetilde{V}^0$ coincides with that of $V_0$.
    \end{itemize}
    Moreover, the sequence $\widetilde{V}_{n_j}$ satisfies the same energy estimate under the new probability space as in Proposition~\ref{t.062502}. 
    
    \medskip

\noindent\textbf{Step 2: Identify the limit as a martingale solution.} We now show that the limit $\left(\widetilde{V}_0,\widetilde{V}, \widetilde{\mathbb W}\right)$ is the desired martingale solution over the stochastic basis $(\widetilde{\Omega},\widetilde{\mathcal{F}}, \widetilde{\mathbb{F}}, \widetilde{\mathbb{P}})$ with $\widetilde{\mathbb{F}}$ being the filtration generated by $\widetilde{V}, \widetilde{\Wb}$. Firstly, by the convergence $\widetilde{V}_{n_j}\to \widetilde{V}$ in $\mathcal{X}$, Proposition~\ref{t.062502} and the Banach-Alaoglu theorem, we infer:
    \begin{align}\label{e.071501}
        \widetilde{V}\in L^2\left(\widetilde{\Omega} ; L^2\left(0, T ; \Hb^{\s+\frac{s}{2}}\right) \cap L^{\infty}\left(0, T ; \Hb^{\s}\right)\right). 
    \end{align}
    Moreover, the Vitali convergence theorem implies:
    \begin{align}\label{eq.L060102}
        \widetilde{V}_{n_j} \rightarrow \widetilde{V} \text { in } L^2\left(\widetilde{\Omega}; L^2\left(0, T; \Hb^{\s}\right)\right).  
    \end{align}
    Thus there exists a subsequence, still denote by $\widetilde{V}_{n_j}$, such that: 
    \begin{align}\label{eq.L060103}
        \widetilde{V}_{n_j} \rightarrow \widetilde{V} \text { in } L^2\left(0, T; \Hb^{\s}\right) \text { for a.a. }\omega \in\widetilde{\Omega}\text {. }
    \end{align}
    Since each pair $(\widetilde{V}_{n_j}, \widetilde{\mathbb W}_{n_j})$ is a martingale solution to system \eqref{e.052505} with $n=n_j$, we have  
    \begin{align}\label{eq.L060101}
        \begin{split}
            &\left\langle \widetilde{V}_{n_j}(t),\phi \right\rangle+ \int_0^t\left\langle\theta_\rho^2P_{n_j}\Pc Q(\widetilde{V}_{n_j}, \widetilde{V}_{n_j}) + P_{n_j}\Pc F(\widetilde{V}_{n_j})+\LL^s\widetilde{V}_{n_j}, \phi\right\rangle dr\\
            &\qquad = \left\langle \widetilde{V}_{n_j}(0),\phi \right\rangle+ \frac12 \sum_k\int_0^t\left\langle P_{n_j}(\Pc B_k)^2\widetilde{V}_{n_j},\phi \right\rangle dr  \\ 
            &\qquad\qquad + \sum_k \int_0^t\left\langle P_{n_j}\Pc B_k\widetilde{V}_{n_j}, \phi \right\rangle d\widetilde{W}_r^{k,n_j} , 
        \end{split}
    \end{align}
    for any $\phi\in \Hb$ and $t\in [0,T]$. Here and below, $\widetilde{W}^{k,n_j}$ is the $k$-th component of the Wiener process $\widetilde{\mathbb W}_{n_j}$.  We now prove that $(\widetilde{V},\widetilde{\mathbb W})$ is a martingale solution to \eqref{e.052504} by passing the limit $j\to\infty$ in \eqref{eq.L060101}. 

\smallskip
{\it Linear Terms.}    The convergence of the linear terms follows straightforwardly from \eqref{eq.L060103}. Here, we only address the linear term corresponding to the It\^o-Stratonovich corrector. Observe that 
    \begin{align*}
        &\left|\frac12 \sum_k\int_0^t\left\langle P_{n_j}(\Pc B_k)^2\widetilde{V}_{n_j},\phi \right\rangle dr - \frac12 \sum_k\int_0^t\left\langle (\Pc B_k)^2\widetilde{V},\phi \right\rangle dr\right|\\
        &\leq  \frac12 \sum_k\int_0^t\left|\left\langle (\Pc B_k)^2(\widetilde{V}_{n_j}-\widetilde{V}),P_{n_j}\phi \right\rangle \right|dr + \frac12 \sum_k\int_0^t\left|\left\langle (\Pc B_k)^2\widetilde{V},P_{n_j}\phi -\phi\right\rangle \right|dr\\
        &\leq C\|b\|_{\ell^2(\Nb,W^{1,\infty})}^2\left(\|\phi\|\int_0^T\|\widetilde{V}_{n_j}-\widetilde{V}\|_{2}dr + \|P_{n_j}\phi-\phi\|\int_0^T\|\widetilde{V}\|_2dr\right)\to 0,
    \end{align*}
    almost surely by \eqref{e.071501} and \eqref{eq.L060103}. 

\smallskip
{\it Nonlinear Terms. }    Denote $\theta_\rho^2(V) = \theta_\rho^2(\|V\|_{W^{1,\infty}})$ for convenience. For the nonlinear term, we have 
    \begin{align*}
        \begin{split}
            &\left|\int_0^t\left\langle\theta_\rho^2(\widetilde{V}_{n_j})P_{n_j}\Pc Q(\widetilde{V}_{n_j}, \widetilde{V}_{n_j}),\phi\right\rangle dr-\int_0^t\left\langle\theta_\rho^2(\widetilde{V})\Pc Q(\widetilde{V}, \widetilde{V}),\phi\right\rangle dr\right|\\
            &\leq \int_0^t\left|\left\langle\theta_\rho^2(\widetilde{V}_{n_j})\Pc Q(\widetilde{V}_{n_j}, \widetilde{V}_{n_j}),P_{n_j}\phi-\phi\right\rangle \right|dr+\int_0^t\left|\left\langle\left(\theta_\rho^2(\widetilde{V}_{n_j})-\theta_\rho^2(\widetilde{V})\right)\Pc Q(\widetilde{V}_{n_j}, \widetilde{V}_{n_j}),\phi\right\rangle \right|dr\\
            &\qquad +\int_0^t\left|\left\langle\theta_\rho^2(\widetilde{V})\left(\Pc Q(\widetilde{V}_{n_j}, \widetilde{V}_{n_j})-\Pc Q(\widetilde{V}, \widetilde{V}) \right),\phi\right\rangle \right|dr = N_1+N_2+N_3. 
        \end{split}
    \end{align*}
    By the Cauchy-Schwarz inequality, Lemma \ref{l.071701} and Sobolev embedding we have
    \begin{align*}
        N_1\leq C\|P_{n_j}\phi-\phi\|\int_0^t\|Q(\widetilde{V}_{n_j}, \widetilde{V}_{n_j})\|dr\leq C\|P_{n_j}\phi-\phi\|\int_0^t\|\widetilde{V}_{n_j}\|_{\s}^2dr\to 0,
    \end{align*}
    almost surely. Similarly, one has 
    \begin{align*}
        N_3&\leq C\|\phi\|\int_0^t\|\widetilde{V}_{n_j}-\widetilde{V}\|_{\s}(\|\widetilde{V}_{n_j}\|_{\s}+\|\widetilde{V}\|_{\s})dr\\
        &\leq C\|\phi\|\left(\int_0^T\|\widetilde{V}_{n_j}-\widetilde{V}\|_{\s}^2dr\right)^{1/2}\left(\int_0^T\|\widetilde{V}_{n_j}\|_{\s}^2+\|\widetilde{V}\|_{\s}^2dr\right)^{1/2}\to 0,
    \end{align*}
    almost surely. By H\"older's inequality, the Lipschitz continuity of the cutoff function, Lemma \ref{l.071701}, and Sobolev embeddings, we have 
    \begin{align*}
        N_2&\leq C\left(\int_0^T\|\widetilde{V}_{n_j}-\widetilde{V}\|_{\s}^2dr\right)^{\frac12}\left(\int_0^T\|Q(\widetilde{V}_{n_j},\widetilde{V}_{n_j})\|^2\|\phi\|^2dr\right)^{\frac12}\\
        &\leq C\|\phi\|\left(\int_0^T\|\widetilde{V}_{n_j}-\widetilde{V}\|_{\s}^2dr\right)^{\frac12}\left(\int_0^T\|\widetilde{V}_{n_j}\|_{W^{1,\infty}}^2\|\widetilde{V}_{n_j}\|_{1}^2dr\right)^{\frac12}\\
        &\leq C\|\phi\|\left(\int_0^T\|\widetilde{V}_{n_j}-\widetilde{V}\|_{\s}^2dr\right)^{\frac12}\left(\int_0^T\|\widetilde{V}_{n_j}\|_{\s}^2dr\right)^{\frac12}\sup_{r\in [0,T]}\|\widetilde{V}_{n_j}\|_{\s-2}\to 0,
    \end{align*}
    almost surely by \eqref{eq.L060102} and the convergence of $\widetilde{V}_{n_j}\to \widetilde{V}$ in $\mathcal{X}$ (ensuring that $\sup_{r\in [0,T]}\|\widetilde{V}_{n_j}\|_{\s-2}$ remains finite almost surely in the limit $j\to\infty$). 

\smallskip
{\it Stochastic Terms.} Lastly, we look at the stochastic integral term. Since $\s>3$ and $\widetilde{V}_{n_j}\to \widetilde{V}$ in $C([0,T];\Hb^{\s-2})$, we have 
    \begin{align*}
        &\sup_{r\in[0,T]}\left|\sum_k\left\langle P_{n_j}\Pc B_k\widetilde{V}_{n_j}, \phi \right\rangle - \sum_k\left\langle \Pc B_k\widetilde{V}, \phi \right\rangle\right|\\
        &\leq  C\sup_{r\in[0,T]}\sum_k \left(\left|\left\langle \Pc B_k\widetilde{V}_{n_j}, P_{n_j}\phi -\phi\right\rangle\right|+\left|\left\langle \Pc B_k\left(\widetilde{V}_{n_j}-\widetilde{V}\right), \phi\right\rangle\right|\right)\\
        &\leq C\|b\|_{\ell^2(\Nb,L^{\infty})}\left(\|P_{n_j}\phi -\phi\|\sup_{r\in[0,T]}\|\widetilde{V}_{n_j}(r)\|_{\s-2}+\|\phi\|\sup_{r\in[0,T]}\|\widetilde{V}_{n_j}-\widetilde{V}\|_{\s-2}\right)\to 0,
    \end{align*} 
    almost surely. This fact, together with the almost sure convergence of 
    $\widetilde{\mathbb W}_{n_j} \to \widetilde{\mathbb W}$ in  $ C\left(\left[0, T\right]; \Uc \right)$ and Theorem 4.2 in \cite{kurtz1996weak} imply the following convergence 
    \[\sum_k \int_0^t\left\langle P_{n_j}\Pc B_k\widetilde{V}_{n_j}, \phi \right\rangle d\widetilde{W}_r^{k,n_j} \to \sum_k \int_0^t\left\langle \Pc B_k\widetilde{V}, \phi \right\rangle d\widetilde{W}_r^{k}\]
    in probability in $C([0,T];\mathbb R)$. Passing to a subsequence if necessary, we obtain the above convergence in $C([0,T];\mathbb R)$ almost surely. 
    
    We have just proved that for each $\phi\in \Hb$ and $t\in[0,T]$, the triple $(\widetilde{V}_0,\widetilde{V},\widetilde{\Wb})$ satisfies 
    \begin{align*}
        \begin{split}
            \left\langle \widetilde{V}(t),\phi \right\rangle+ \int_0^t\left\langle\theta_\rho^2\Pc Q(\widetilde{V}, \widetilde{V}) + \Pc F(\widetilde{V})+\LL^s\widetilde{V}, \phi\right\rangle dr 
            &= \left\langle \widetilde{V}(0),\phi \right\rangle+ \frac12 \sum_k\int_0^t\left\langle (\Pc B_k)^2\widetilde{V},\phi \right\rangle dr  \\ 
            &\qquad\qquad + \sum_k \int_0^t\left\langle \Pc B_k\widetilde{V}, \phi \right\rangle d\widetilde{W}_r^{k} , 
        \end{split}
    \end{align*}
    almost surely over $(\widetilde{\Omega},\widetilde{\mathcal{F}}, \widetilde{\mathbb{P}})$. This result shows that $t\to\left\langle \widetilde{V}(t),\phi \right\rangle$ is continuous for each  $\phi\in \Hb$.  It remains to prove that $\widetilde{V}\in L^2(\widetilde{\Omega};C([0,T],\Hb^{\s}))$ so that $\left(\widetilde{V}_0,\widetilde{V}, \widetilde{\mathbb W}\right)$ is the desired martingale solution.

    First, we note that each solution sample path $t\to \widetilde{V}(t)$ is weakly continuous in $\Hb^{\s}$. Indeed, since $\widetilde V\in L^2(\widetilde{\Omega};L^{\infty}([0,T],\Hb^{\s}))$, its $L^{\infty}([0,T],\Hb^{\s})$ norm is almost surely finite. For any $\phi\in \Hb^{-\s}$, by the density of $C^{\infty}(\Tb^3)\cap \Hb$ in $\Hb^{-\s}$,  there exists a sequence $\phi_n\in C^{\infty}(\Tb^3)\cap\Hb$ such that $\phi_n \to \phi$ in $\Hb^{-\s}$. Therefore,
    \[\sup_{t\in[0,T]}\left|\left\langle \widetilde{V}(t),\phi_n \right\rangle-\left\langle \widetilde{V}(t),\phi \right\rangle\right|\leq \|\widetilde{V}\|_{L^{\infty}([0,T],\Hb^{\s})}\|\phi_n-\phi\|_{-\s}\to 0. \]
    Consequently, $t\to\left\langle \widetilde{V}(t),\phi \right\rangle$ is continuous for each  $\phi\in \Hb^{-\s}$. Therefore, to prove $\widetilde{V}\in L^2(\widetilde{\Omega};C([0,T],\Hb^{\s}))$, we only need to show that $t\to \|\widetilde{V}(t)\|_{\s}$ is continuous almost surely. Direct application of It\^o's formula is not possible for the norm $\|\cdot\|_{\s}$, due to the limited regularity of the solution and the It\^o-Stratonovich corrector. We instead use a mollification argument, adapted from \cite{kim2009existence}. 
    
    Let $\varepsilon>0$, and define the mollification operator $J_{\varepsilon}$ using the standard mollifier $\phi_{\varepsilon}$ on the periodic torus with respect to the spatial variable: 
    \[J_{\varepsilon}f(x) = (\phi_{\varepsilon}\ast f)(x).\]
    The operator is self-adjoint on $H$ and commutes with $\Pc$ and $\LL$ as a Fourier multiplier. Applying $J_{\varepsilon}$ to the equation satisfied by $\widetilde{V}$, we obtain   
    \begin{align*}
            \begin{split}
                J_{\varepsilon}\widetilde{V}(t)+ \int_0^tJ_{\varepsilon}\left(\theta_\rho^2\Pc Q(\widetilde{V}, \widetilde{V}) + \Pc F(\widetilde{V})+\LL^s\widetilde{V}\right) dr 
                &= J_{\varepsilon}\widetilde{V}_0+ \frac12 \sum_k\int_0^tJ_{\varepsilon} (\Pc B_k)^2\widetilde{V} dr  \\ 
                &\qquad\qquad + \sum_k \int_0^tJ_{\varepsilon}\Pc B_k\widetilde{V} d\widetilde{W}_r^{k}.  
            \end{split}
    \end{align*}
    Applying It\^o's formula to $\|\LL^{\s}J_{\varepsilon}\widetilde{V}(t)\|^{2}$ yields 
    \begin{align*}
        \begin{split}
            \|\LL^{\s}J_{\varepsilon}\widetilde{V}(t)\|^{2} &=\|\LL^{\s}J_{\varepsilon}\widetilde{V}_0\|^{2} -\int_0^t2\left\langle J_{\varepsilon}\left(\theta_\rho^2 \Pc Q(\widetilde{V},\widetilde{V})+\Pc F(\widetilde{V})+\Lambda^s \widetilde{V}\right),\Lambda^{2\sigma}J_{\varepsilon}\widetilde{V}\right\rangle dr \\
            &+\sum_{k}\int_0^t\left(\left\langle J_{\varepsilon}(\mathcal{P}B_k)^2\widetilde{V},\Lambda^{2\sigma}J_{\varepsilon}\widetilde{V} \right\rangle+\|\LL^{\s}J_{\varepsilon}\Pc B_k\widetilde{V}\|^2\right) dr+\int_0^t2\sum_{k}\langle J_{\varepsilon} \Pc B_k \widetilde{V}, \LL^{2\sigma}J_{\varepsilon} \widetilde{V}\rangle d\widetilde{W}_r^{k}.
        \end{split}
    \end{align*}
    Estimate similarly as in the proof of Proposition~\ref{t.062502}, we have 
    \begin{align*}
        \left|\left\langle J_{\varepsilon}\left(\theta_\rho^2 \Pc Q(\widetilde{V},\widetilde{V})+\Pc F(\widetilde{V})+\Lambda^s \widetilde{V}\right),\Lambda^{2\sigma}J_{\varepsilon}\widetilde{V}\right\rangle\right|\leq (C_{p,\s,\rho}+1)\|\widetilde{V}\|_{\s+\frac{s}{2}}^2. 
    \end{align*}
    Denote by $A^{\s}=\Lambda^{\s}J_{\varepsilon}$ and perform estimates as in \eqref{e.070701} gives
    \begin{align*}
        &\left|\left\langle J_{\varepsilon}(\mathcal{P}B_k)^2\widetilde{V},\Lambda^{2\sigma}J_{\varepsilon}\widetilde{V} \right\rangle+\|\LL^{\s}J_{\varepsilon}\Pc B_k\widetilde{V}\|\right|\\
        &=\Big|\langle [A^{\s},B_k ][\Pc,  B_k] \widetilde{V},A^{\s}\widetilde{V} \rangle + \langle A^{\s}[\Pc,  B_k] \widetilde{V},[A^{\s},B_k ] \widetilde{V} \rangle\\
        &\qquad+ \langle  [[A^{\s},B_k ],B_k]\widetilde{V},A^{\s}\widetilde{V} \rangle + \langle [A^{\s},B_k ] \widetilde{V}, [A^{\s},B_k ] \widetilde{V}\rangle\Big|.
    \end{align*}
    Similar to \eqref{e.072103}, under the assumption on the noise coefficients we have 
    \begin{align*}
        \left|\sum_{k}\int_0^t\left(\left\langle J_{\varepsilon}(\mathcal{P}B_k)^2\widetilde{V},\Lambda^{2\sigma}J_{\varepsilon}\widetilde{V} \right\rangle+\|\LL^{\s}J_{\varepsilon}\Pc B_k\widetilde{V}\|^2\right) dr\right|\leq \int_0^t\|\widetilde{V} \|_{\s+\frac{s}{2}}^2dr+C\int_0^t\|\widetilde{V} \|_{\s}^2dr.
    \end{align*}

    On the other hand, since $J_{\varepsilon}$ is self-adjoint and each $b_k$ in $B_k=b_k\cdot\nabla$ is divergence-free, the Burkholder-Davis-Gundy inequality, Minkowski's inequality, Lemma~\ref{l.071701}, and Sobolev's inequality yield:
    \begin{align*}
        &\Eb\sup_{t\in[0,T]}\left|\int_0^t2\sum_{k}\left(\left\langle J_{\varepsilon} \Pc B_k \widetilde{V}, \LL^{2\sigma}J_{\varepsilon} \widetilde{V}\right\rangle-\left\langle  \Pc B_k \widetilde{V}, \LL^{2\sigma} \widetilde{V}\right\rangle \right)d\widetilde{W}_r^{k}\right|\\
        &=2\Eb\sup_{t\in[0,T]}\left|\int_0^t\sum_{k}\left\langle \LL^{\s} B_k \widetilde{V}, \LL^{\sigma}J_{\varepsilon}^2 \widetilde{V}-\LL^{\sigma}\widetilde{V}\right\rangle d\widetilde{W}_r^{k}
        \right|\\
        &\leq C \Eb\left(\int_0^T \sum_{k}\left\langle \LL^{\s} B_k \widetilde{V}, \LL^{\sigma}J_{\varepsilon}^2 \widetilde{V}-\LL^{\sigma}\widetilde{V}\right\rangle^2  dr\right)^{\frac12}\\
        &\leq C \Eb\left(\int_0^T \sum_{k}\left\langle [\LL^{\s}, B_k ]\widetilde{V}, \LL^{\sigma}J_{\varepsilon}^2 \widetilde{V}-\LL^{\sigma}\widetilde{V}\right\rangle^2  dr\right)^{\frac12}+C \Eb\left(\int_0^T \sum_{k}\left\langle [J_{\varepsilon}, B_k]\LL^{\s}\widetilde{V}, \LL^{\sigma}J_{\varepsilon} \widetilde{V}\right\rangle^2  dr\right)^{\frac12}\\
        &\lesssim \delta\Eb\sup_{t\in[0,T]}\|\widetilde{V}\|_{\s}^2+C_{\delta}\Eb\left(\int_0^T\|\LL^{\sigma}J_{\varepsilon}^2 \widetilde{V}-\LL^{\sigma}\widetilde{V}\|^2dr + \int_0^T\|[J_{\varepsilon}, B_k]\LL^{\s}\widetilde{V}\|^2 dr\right)\to 0,
    \end{align*}
    by first letting $\varepsilon\to 0$ then $\delta\to0$. Here, the convergence of the last integral follows from the Friedrichs' lemma \cite{hormander2007analysis,taylor2017pseudodifferential}. 
    Thus, there exists a sequence $\varepsilon_n\to 0$ such that as $n\to \infty$, 
    \[\int_0^{\cdot}2\sum_{k}\langle J_{\varepsilon_n} \Pc B_k \widetilde{V}, \LL^{2\sigma}J_{\varepsilon_n} \widetilde{V}\rangle d\widetilde{W}_r^{k}\to \int_0^{\cdot}2\sum_{k}\langle  \Pc B_k \widetilde{V}, \LL^{2\sigma}\widetilde{V}\rangle d\widetilde{W}_r^{k},\]
    almost surely in $C([0,T];\Rb)$. 
    
    Consequently, for any $0\leq t_1<t_2\leq T$, we have almost surely 
    \begin{align}\label{e.071502}
        \begin{split}
        \Big|\|\widetilde{V}(t_2)\|_{\sigma}^{2}&-\|\widetilde{V}(t_1)\|_{\sigma}^{2}\Big| = \lim_{n\to\infty}\left|\|J_{\varepsilon_n}\widetilde{V}(t_2)\|_{\sigma}^{2}-\|J_{\varepsilon_n}\widetilde{V}(t_1)\|_{\sigma}^{2}\right|\\
        &\leq \int_{t_1}^{t_2}\left(2(C_{p,\s,\rho}+2)\|\widetilde{V}\|_{\s+\frac{s}{2}}^2+C\|\widetilde{V}\|_{\s}^2\right)dr + \left|\int_{t_1}^{t_2}2\sum_{k}\langle  \Pc B_k \widetilde{V}, \LL^{2\sigma}\widetilde{V}\rangle d\widetilde{W}_r^{k}\right|. 
        \end{split}
    \end{align}
    From \eqref{e.071501}, we know that $\widetilde{V}\in L^2\left(0, T ; \Hb^{\s+\frac{s}{2}}\right) \cap L^{\infty}\left(0, T ; \Hb^{\s}\right)$ almost surely. Therefore \eqref{e.071502} implies the continuity of $t\to \|\widetilde{V}(t)\|_{\s}$.  Combining this with the weak continuity proved earlier, we conclude that $\widetilde{V}\in C([0,T],\Hb^{\s})$ almost surely, and hence $\widetilde{V}\in L^2(\Omega; C([0,T],\Hb^{\s}))$.
\end{proof}

\subsection{Pathwise uniqueness and local pathwise solutions}\label{s.121101}
In this subsection, we establish the pathwise uniqueness of martingale solutions to the cutoff system \eqref{e.052504}. Combining this result with the existence of a martingale solution (Proposition~\ref{p.070601}), we then deduce the existence of a unique pathwise solution.

\begin{proposition}\label{p.121001}
    Assume the same conditions as in Proposition~\ref{t.062502}. 
   Let $(\Sc, \Wb, V_1)$ and $(\Sc, \Wb, V_2)$ be two solutions to the cutoff system \eqref{e.052504} on $[0,T]$ with the same initial data $V_0$, over the same stochastic basis $\Sc$ and driven by the same noise $\Wb$. Then 
    \[\Pb\left(V_1=V_2, \text{ for all } t\in [0,T]\right) =1.\] 
\end{proposition}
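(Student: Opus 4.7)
The strategy is to apply It\^o's formula to $\|U\|^2$, where $U := V_1 - V_2$, combined with an auxiliary stopping time on the high regularity norm (the second cutoff) to close a Gr\"onwall argument. Both $V_1$ and $V_2$ are continuous in $\Hb^\s$ almost surely by the regularity statement \eqref{eq:martingale.solution.approx.regularity} of Proposition~\ref{p.070601}, so
\[
\tau_K := \inf\{ t \geq 0 : \|V_1(t)\|_\s + \|V_2(t)\|_\s \geq K \} \wedge T
\]
is a well-defined $\Fb$-stopping time with $\tau_K \nearrow T$ almost surely as $K \to \infty$. Since $\Pc$ and $\Lambda^s$ are linear and $F$ is linear, the equation for $U$ reads
\[
dU + \bigl[\theta_\rho^2(V_1)\Pc Q(V_1,V_1) - \theta_\rho^2(V_2)\Pc Q(V_2,V_2) + \Pc F(U) + \Lambda^s U\bigr] dt
 = \tfrac{1}{2}\sum_k (\Pc B_k)^2 U \, dt + \sum_k \Pc B_k U \, dW^k,
\]
with $U(0) = 0$. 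It\^o's formula applied to $\|U\|^2$ yields a dissipation term $-2\|\Lambda^{s/2}U\|^2 dt$, the Coriolis term vanishes since $\langle F(U),U\rangle = 0$, and the noise-plus-corrector contribution $\sum_k [\langle (\Pc B_k)^2 U, U\rangle + \|\Pc B_k U\|^2] dt$ can be bounded by repeating the commutator computation \eqref{e.070701} at the $L^2$ level (which is easier since $[\Lambda^0,B_k]=0$) to give $\lesssim \|b\|_{\ell^2(\Nb,H^{\s+3})}^2 \|U\|^2 + \tfrac{1}{4}\|\Lambda^{s/2}U\|^2$.

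The crux is the nonlinear term, which I decompose as
\[
-2\theta_\rho^2(V_1)\bigl\langle \Pc Q(U,V_1) + \Pc Q(V_2,U),\, U\bigr\rangle
- 2[\theta_\rho^2(V_1) - \theta_\rho^2(V_2)]\bigl\langle \Pc Q(V_2,V_2),\, U\bigr\rangle =: \mathcal{N}_1 + \mathcal{N}_2.
\]
The term $\langle Q(V_2,U), U\rangle$ vanishes by the standard integration-by-parts cancellation using $\nabla_h\cdot V_2 + \partial_z w_2 = 0$ (since $\int_0^z \nabla_h\cdot V_2$ comes from the vertical velocity), and $\langle Q(U,V_1), U\rangle \lesssim \|V_1\|_{W^{1,\infty}} \|U\|^2 \lesssim \rho \|U\|^2$ thanks to the support property of $\theta_\rho$. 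For $\mathcal{N}_2$, Lipschitz continuity of $\theta_\rho^2(\cdot)$ gives $|\theta_\rho^2(V_1)-\theta_\rho^2(V_2)| \leq C_\rho \|U\|_{W^{1,\infty}}$, and $\|U\|_{W^{1,\infty}}$ is bounded by $\|U\|_\s$ via Sobolev embedding since $\s > 3$. This is where the usual one-cutoff scheme breaks: we cannot close Gr\"onwall in $L^2$ without controlling $\|U\|_\s$. The \emph{second cutoff} $\tau_K$ rescues this: on $[0,\tau_K]$ we have $\|U\|_\s \leq 2K$, and using Lemma~\ref{l.071701} and Sobolev embedding,
\[
|\mathcal{N}_2| \leq C_\rho \|U\|_{W^{1,\infty}}\|Q(V_2,V_2)\| \|U\| \leq C_{\rho,K} \|U\|_\s \|V_2\|_\s^2 \|U\| \leq C_{\rho,K} \|U\|_{\s-\delta}^{1-\theta}\|U\|_{s/2}^{\theta}\|U\|,
\]
for some small $\delta > 0$ and $\theta\in(0,1)$ chosen by interpolation so that $\theta$ places the derivative onto the dissipation; then Young's inequality absorbs a fraction of $\|\Lambda^{s/2}U\|^2$ into the dissipation, leaving the residue as $C_{\rho,K,b}\|U\|^2$.

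After collecting terms and applying the Burkholder--Davis--Gundy inequality to the martingale term $2\sum_k \int_0^{t\wedge\tau_K}\langle \Pc B_k U, U\rangle dW^k$, one arrives at
\[
\Eb \sup_{r \in [0,t\wedge\tau_K]} \|U(r)\|^2 \leq C_{\rho,K,b} \int_0^t \Eb\sup_{r\in [0,s\wedge\tau_K]}\|U(r)\|^2 \, ds,
\]
and Gr\"onwall's inequality combined with $U(0)=0$ forces $U \equiv 0$ on $[0,\tau_K]$ almost surely. Sending $K \to \infty$ yields $V_1 = V_2$ on $[0,T]$ almost surely. The main obstacle is the cutoff-difference term $\mathcal{N}_2$: the cutoff $\theta_\rho$ is defined via the $W^{1,\infty}$ norm and is not globally Lipschitz in any weaker norm, so its Lipschitz bound brings in the high-regularity norm of $U$; the second cutoff $\tau_K$ is precisely what converts this into a bound linear in $\|U\|$ (via interpolation against the $\Lambda^{s/2}$-dissipation) so that the argument can close.
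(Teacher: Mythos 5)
Your proposal has a genuine gap at the step where you estimate $\mathcal{N}_2$, and the gap is precisely what makes this proposition nontrivial. You have correctly identified the dangerous term, but your remedy does not work.

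The interpolation $\|U\|_\s \lesssim \|U\|_{\s-\delta}^{1-\theta}\|U\|_{s/2}^{\theta}$ is false: since $\s-\delta<\s$ and $s/2<1<\s$, \emph{both} exponents on the right are lower than $\s$, and one cannot interpolate a Sobolev norm from two strictly weaker norms. The correct Sobolev embedding only gives $\|U\|_{W^{1,\infty}}\lesssim \|U\|_{\s-\frac12}$ (or any order above $5/2$), and there is no way to trade this order down to $s/2$ when $s<2$. If you instead use the stopping time $\tau_K$ to bound $\|U\|_\s\leq 2K$, you only get $|\mathcal{N}_2|\lesssim K^3\|U\|$, which is linear — not quadratic — in $\|U\|$. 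Feeding this into Gr\"onwall produces $\Eb\|U(t)\|^2 \leq C_K(e^{C_K t}-1)$, which is strictly positive for $t>0$; the argument does not force $U\equiv 0$. A single stopping time on the size of $V_1,V_2$ cannot fix this, because the Lipschitz bound for $\theta_\rho$ inherently brings in $\|U\|_{W^{1,\infty}}$, and an $L^2$-level dissipation $\|\Lambda^{s/2}U\|^2$ with $s/2<1$ has no way to absorb a norm of order $>5/2$.

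The paper's ``double cutoff'' is \emph{not} a second stopping time. It is the algebraic decomposition
\[
\theta_\rho^2(V_1)\Pc Q(V_1,V_1)-\theta_\rho^2(V_2)\Pc Q(V_2,V_2)
= \underbrace{\bigl(\theta_\rho(V_1)-\theta_\rho(V_2)\bigr)\bigl(\theta_\rho(V_1)\Pc Q(V_1,V_1)+\theta_\rho(V_2)\Pc Q(V_2,V_2)\bigr)}_{Q_1}
+ \underbrace{\theta_\rho(V_1)\theta_\rho(V_2)\,\Pc\bigl(Q(V_1,V_1)-Q(V_2,V_2)\bigr)}_{Q_2},
\]
so that the ``hard'' part $Q_2$ retains a surviving cutoff factor $\theta_\rho(V_1)\theta_\rho(V_2)$. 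The proof then performs energy estimates at \emph{two} Sobolev levels, $L^2$ and $H^{\s-\frac12}$, and adds them. At the $L^2$ level, the cutoff-difference term inevitably produces $\|\bV\|_{\s-\frac12}^2$ on the right-hand side, which cannot be closed alone; at the $H^{\s-\frac12}$ level, the surviving cutoff factor in $Q_2$ converts $\|V_i\|_{W^{1,\infty}}$ into the \emph{small} parameter $\rho$, so the top-order term $C_\s\rho\|\Lambda^\s\bV\|^2$ can be absorbed by the dissipation $\|\Lambda^{\s-\frac12+s/2}\bV\|^2$ via interpolation (for $s>1$) or directly by choosing $\rho$ small (for $s=1$). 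Summing the two levels yields a differential inequality that is linear in $\|\bV\|_{\s-\frac12}^2$, and the stochastic Gr\"onwall argument with the exponential weight $Y_t$ closes. This two-level estimate is the idea you are missing; a single $L^2$ estimate supplemented by a stopping time does not reproduce it.

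One smaller point: in your $\mathcal{N}_1$ bound you discard $\theta_\rho^2(V_1)$ and use $\|V_1\|_{W^{1,\infty}}\lesssim\rho$ to conclude $\langle Q(U,V_1),U\rangle\lesssim\rho\|U\|^2$, but the product rule for $Q(U,V_1)$ also produces $\nabla_h V_1$ and $\partial_z V_1$ hit by one derivative, so this estimate is fine; the issue is solely with $\mathcal{N}_2$.
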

\begin{proof}
    Let $\bV = V_1-V_2$. Denote $\|\cdot\|_{1,\infty}=\|\cdot\|_{W^{1,\infty}}$ for convenience. Note that $\bV$ solves 
    \begin{align*}
        \begin{split}
            d\bV + \big[\theta_\rho^2(\|V_1\|_{1,\infty})\mathcal{P}Q( V_1, V_1)-\theta_\rho^2(\|V_2\|_{1,\infty})\mathcal{P}Q( V_2, V_2) &+ \mathcal{P}F(\bV) + \Lambda^s \bV\big]dt\\
            &= \frac12\sum_{k=1}^{\infty}(\mathcal{P}B_k)^2\bV +  \sum_{k=1}^\infty\mathcal{P}B_k\bV dW^k,
        \end{split}
    \end{align*}  
    with initial data $\bV_0=0$. By It\^o's formula, we have 
    \begin{align*}
        \frac12d\|\bV\|^2 + \Big\langle\big[\theta_\rho^2(\|V_1\|_{1,\infty})\mathcal{P}Q( V_1, V_1)&-\theta_\rho^2(\|V_2\|_{1,\infty})\mathcal{P}Q( V_2, V_2)  + \Lambda^s \bV\big], \bV\Big\rangle dt\\
        & = \frac12\sum_{k=1}^{\infty}\left(\left\langle(\mathcal{P}B_k)^2\bV, \bV\right\rangle + \|\Pc B_k \bV\|^2\right)dt + \sum_{k=1}^\infty\langle\mathcal{P}B_k\bV, \bV\rangle dW^k.
    \end{align*}
   Using integration by parts, one has
    $\langle \Pc Q(V_2, \bV), \bV\rangle=\langle\mathcal{P}B_k\bV, \bV\rangle=0$, and $\langle(\mathcal{P}B_k)^2\bV, \bV\rangle = - \|\mathcal{P} B_k\bV\|^2$. Thus, the above equation reduces to 
    \begin{align*}
        \frac12d\|\bV\|^2+ \Big\langle\left(\theta_\rho^2(\|V_1\|_{1,\infty})-\theta_\rho^2(\|V_2\|_{1,\infty})\right)\mathcal{P}Q( V_1, V_1)
        &+\theta_\rho^2(\|V_2\|_{1,\infty})\mathcal{P}Q( \bV, V_1), \bV\Big\rangle dt\\
        & = -\|\LL^{\frac{s}{2}}\bV\|^2dt.
    \end{align*}
    For $\s>3$, using Lemma \ref{l.071701} and Sobolev's inequality $\|f\|_{1,\infty}\lesssim \|f\|_{\s-\frac12}$, we estimate 
    \begin{align*}
        \Big\langle\left(\theta_\rho^2(\|V_1\|_{1,\infty})-\theta_\rho^2(\|V_2\|_{1,\infty})\right)\mathcal{P}Q( V_1, V_1), \bV\Big\rangle 
        \leq C\|\bV\|_{1,\infty}\|\bV\|\|V_1\|_{1,\infty}\|V_1\|_{1}\leq C\|\bV\|_{\s-\frac12}^2\|V_1\|_{\s-\frac12}^2, 
    \end{align*}
    and  
    \begin{align*}
        \Big\langle\theta_\rho^2(\|V_2\|_{1,\infty})\mathcal{P}Q( \bV, V_1), \bV\Big\rangle\leq C\left(\|\bV\|_{1,\infty}\|V_1\|_{1}+\|\bV\|_{1}\|\|V_1\|_{1,\infty}\right)\|\bV\|\leq C\|\bV\|_{\s-\frac12}^2\|V_1\|_{\s-\frac12}. 
    \end{align*}
    Consequently, we obtain 
    \begin{align}\label{e.072301}
        \frac12d\|\bV\|^2+\|\LL^{\frac{s}{2}}\bV\|^2dt\leq C\|\bV\|_{\s-\frac12}^2\left(\|V_1\|_{\s-\frac12}^2+1\right) dt.
    \end{align}

    Next, we estimate the $\s-\frac12$ norm of $\bV$. By It\^o's formula we have 
    \begin{align*}
        \frac12d\|\LL^{\s-\frac12}\bV\|^2 &+ \Big\langle \LL^{\s-\frac12}\big[\theta_\rho^2(\|V_1\|_{1,\infty})\mathcal{P}Q( V_1, V_1)-\theta_\rho^2(\|V_2\|_{1,\infty})\mathcal{P}Q( V_2, V_2)  + \Lambda^s \bV\big], \LL^{\s-\frac12}\bV\Big\rangle dt\\
        & = \frac12\sum_{k=1}^{\infty}\left(\left\langle \LL^{\s-\frac12}(\mathcal{P}B_k)^2\bV, \LL^{\s-\frac12}\bV\right\rangle + \|\LL^{\s-\frac12}\Pc B_k \bV\|^2\right)dt + \sum_{k=1}^\infty\langle \LL^{\s-\frac12}\mathcal{P}B_k\bV, \LL^{\s-\frac12}\bV\rangle dW^k.
    \end{align*} 
    Following arguments similar to to \eqref{s>1} and \eqref{s=1}, we have 
    \begin{align*}
        \frac12\sum_{k=1}^{\infty}\left(\left\langle \LL^{\s-\frac12}(\mathcal{P}B_k)^2\bV, \LL^{\s-\frac12}\bV\right\rangle + \|\LL^{\s-\frac12}\Pc B_k \bV\|^2\right)
        \leq \frac14\|\LL^{\s-\frac12+\frac{s}{2}}\bV\|^2+C\|\bV\|_{\s-\frac12}^2. 
    \end{align*}
    For the nonlinear term, by utilizing the double cutoff design, we first rewrite
    \begin{align*}
        &\theta_\rho^2(\|V_1\|_{1,\infty})\mathcal{P}Q( V_1, V_1)-\theta_\rho^2(\|V_2\|_{1,\infty})\mathcal{P}Q( V_2, V_2)\\
        &= \Big(\theta_\rho(\|V_1\|_{1,\infty})-\theta_\rho(\|V_2\|_{1,\infty})\Big)\Big(\theta_\rho(\|V_1\|_{1,\infty})\mathcal{P}Q( V_1, V_1)+\theta_\rho(\|V_2\|_{1,\infty})\mathcal{P}Q( V_2, V_2)\Big)\\
        &\qquad +\theta_\rho(\|V_2\|_{1,\infty})\theta_\rho(\|V_1\|_{1,\infty})\mathcal{P}\left(Q( V_1, V_1)-Q( V_2, V_2)\right): = Q_1+Q_2,
    \end{align*}
    where the identity 
    $a_1^2b_1-a_2^2b_2 = (a_1-a_2)(a_1b_1+a_2b_2)+a_1a_2(b_1-b_2)$ has been used. 
    By the Lipschitz continuity of the cutoff function, Lemma \ref{l.071701} and Sobolev's inequality, we have 
    \begin{align*}
        \Big\langle \LL^{\s-\frac12}Q_1, \LL^{\s-\frac12}\bV\Big\rangle\leq \|\LL^{\s-\frac12}Q_1\|\|\LL^{\s-\frac12}\bV\|\leq \|\bV\|_{\s-\frac12}^2\left(\|V_1\|_{\s+\frac12}^2+\|V_2\|_{\s+\frac12}^2\right).
    \end{align*}
    Thanks to the property of the cutoff function, we deduce 
    \begin{align*}
        \Big\langle \LL^{\s-\frac12}Q_2, \LL^{\s-\frac12}\bV\Big\rangle
        &=\theta_\rho(\|V_2\|_{1,\infty})\theta_\rho(\|V_1\|_{1,\infty})\left\langle\LL^{\s-1}\left[\left(Q( V_1, \bV)+Q( \bV, V_2)\right)\right], \LL^{\s}\bV\right\rangle\\
        &\leq C_{\s}\left(\rho\|\LL^{\s}\bV\|+\left(\|\LL^{\s}V_1\|+\|\LL^{\s}V_2\|\right)\|\bV\|_{1,\infty}\right)\|\LL^{\s}\bV\|\\
        &\leq C_{\s}\rho\|\LL^{\s}\bV\|^2+ C_{\rho}\left(\|V_1\|_{\s}^2+\|V_2\|_{\s}^2\right)\|\bV\|_{\s-\frac12}^2, 
    \end{align*}
    where we used Young's inequality for products at the final step. 
    
    Now we need to distinguish between $s\in(1,2]$ and $s=1$. For $s>1$ we infer from the interpolation inequality $\|\LL^{\s}\bV\|^2\leq \varepsilon\|\LL^{\s-\frac12+\frac{s}{2}}\bV\|^2+C_{\varepsilon}\|\LL^{\s-\frac12}\bV\|^2$ that 
    \begin{align}\label{e.071702}
        \Big\langle \LL^{\s-\frac12}Q_2, \LL^{\s-\frac12}\bV\Big\rangle\leq \frac12\|\LL^{\s-\frac12+\frac{s}{2}}\bV\|^2+C\left(1+\|V_1\|_{\s}^2+\|V_2\|_{\s}^2\right)\|\bV\|_{\s-\frac12}^2,
    \end{align}
    by choosing appropriate $\varepsilon$. For $s=1$, we impose the  smallness condition $\rho C_{\s}<\frac12$, ensuring the same bound \eqref{e.071702} holds. 
    Combining the above estimates, we obtain 
    \begin{align*}
        \frac12d\|\LL^{\s-\frac12}\bV\|^2 &+\frac14\|\LL^{\s-\frac12+\frac{s}{2}}\bV\|^2 dt\\
        &\leq C\left(1+\|V_1\|_{\s+\frac12}^2+\|V_2\|_{\s+\frac12}^2\right)\|\bV\|_{\s-\frac12}^2dt + \sum_{k=1}^\infty\langle \LL^{\s-\frac12}\mathcal{P}B_k\bV, \LL^{\s-\frac12}\bV\rangle dW_t^k. 
    \end{align*} 
    This together with estimate \eqref{e.072301} yields
    \begin{align*}
        d\|\bV\|_{\s-\frac12}^2\leq C\left(1+\|V_1\|_{\s+\frac12}^2+\|V_2\|_{\s+\frac12}^2\right)\|\bV\|_{\s-\frac12}^2dt + 2\sum_{k=1}^\infty\langle \LL^{\s-\frac12}\mathcal{P}B_k\bV, \LL^{\s-\frac12}\bV\rangle dW_t^k.  
    \end{align*}
    Letting
    \[Y_t = \exp\left(-C\int_0^t\left(1+\|V_1\|_{\s+\frac12}^2+\|V_2\|_{\s+\frac12}^2\right)dr\right),\]
    It\^o's formula gives
    \begin{align*}
        d\left(Y_t\|\bV\|_{\s-\frac12}^2\right)\leq 2 Y_t\sum_{k=1}^\infty\langle \LL^{\s-\frac12}\mathcal{P}B_k\bV, \LL^{\s-\frac12}\bV\rangle dW_t^k.
    \end{align*}
    In the integral form, this reads 
    \begin{align*}
        Y_t\|\bV(t)\|_{\s-\frac12}^2 \leq  \|\bV_0\|_{\s-\frac12}^2+2\sum_{k=1}^\infty\int_0^tY_r\langle \LL^{\s-\frac12}\mathcal{P}B_k\bV, \LL^{\s-\frac12}\bV\rangle dW_r^k. 
    \end{align*}
    Since $V_0=0$ and the stochastic integral is a martingale, we obtain 
    \begin{align*}
        \Eb\left[Y_t\|\bV(t)\|_{\s-\frac12}^2 \right]\leq 0. 
    \end{align*}
    As $0<Y_t\leq 1$, it follows that $\|\bV(t)\|_{\s-\frac12}^2 =0$ almost surely for all $t$. Because $V_1$ and $V_2$ are modifications both with continuous sample paths, they are indistinguishable. This completes the proof of pathwise uniqueness. 
\end{proof}

We are now ready to prove the main result, Theorem \ref{t.062501}, concerning the existence of a unique maximal pathwise solution. 

\begin{proof}[Proof of Theorem \ref{t.062501}]
    By the Yamada-Watanabe theorem \cite{kurtz2007yamada}, along with Proposition~\ref{p.070601} and Proposition~\ref{p.121001}, we know that for any $T>0$, the cutoff system \eqref{e.052504} has a unique pathwise solution $V\in L^2(\Omega;C([0,T;\mathbb \Hb^{\s}])\cap L^2(0,T;\Hb^{\s+\frac{s}{2}}))$. Define the stopping time
    \begin{align}\label{e.121001}
        \tau = \inf\{t\geq0: \|V\|_{\s}> \rho\}.
    \end{align}

    We first look at the case when $s\in(1,2)$. Let $C_0$ be the constant from the embedding $H^{\s}\subset W^{1,\infty}$. Assume that for some deterministic $M>0$, we have $\|V_0\|_{\s}\leq M$. Then, for any $\rho>2C_0M$, the stopping time $\tau$ is positive. Hence, $(V,\tau)$ is a local pathwise solution of \eqref{e.052502}. To generalize this result for $V_0\in L^2(\Omega,\Hb^{\sigma})$, we use a localization procedure.  For any $k\geq0$, set $V_0^k = V_0\mathbf{1}_{\{k\leq\|V_0\|_{\s}\leq k+1\}}$. Then the above argument yields a local pathwise solution $(V_k,\tau_k)$ with $\rho>2C_0(k+1)$. Then by defining 
    \begin{align*}
        &V=\sum_{k\geq0}V_k\mathbf{1}_{\{k\leq\|V_0\|_{\s}\leq k+1\}},\quad \tau = \sum_{k\geq0}\tau_k\mathbf{1}_{\{k\leq\|V_0\|_{\s}\leq k+1\}},
    \end{align*}
    we obtain the local  pathwise solution $(V,\tau)$ of \eqref{e.052502} with initial data $V_0\in L^2(\Omega,\Hb^{\sigma})$. To extend the solution to a maximal one, let $\mathcal{T}$ be the set of all stopping times corresponding to a local pathwise solution of \eqref{e.052502} with initial data $V_0$. By \cite[Chapter V, Section 18]{doob2012measure}, there exists a stopping time $\xi$ such that $\xi>\tau$ almost surely for any $\tau\in\mathcal{T}$, and there exists a sequence $\{\tau_n\}\subset \mathcal{T}$ satisfying $\tau_n \nearrow \xi$ almost surely. Let $(V_n,\tau_n)$ be the corresponding local pathwise solution and define 
    \begin{align*}
        V(t,\omega)=\lim_{n\to\infty} V_n(t\wedge\tau_n,\omega)\mathbf{1}_{[0,\xi)}(\omega). 
    \end{align*}
    Then $(V,(\tau_n)_{n\geq 1},\xi)$ is the desired maximal pathwise solution in the sense of Definition \ref{definition:pathwise-solution}. 

    For the case $s=1$, we require the cutoff parameter $\rho<\frac{1}{2C_{\s}}$, as stated in Proposition~\ref{t.062502}. Therefore for initial data $V_0$ with $\|V_0\|_{\s}<M:=\frac{1}{4C_0C_{\s}}$, we can always choose $\rho$ such that  $2C_0M<\rho<\frac{1}{2C_{\s}}$ to ensure a unique local pathwise solution $(V,\tau)$, where the stopping time $\tau>0$ corresponds to $\rho$ through \eqref{e.121001}. The extension to a maximal solution proceeds in the same manner as in the case $s \in (1,2)$. This completes the proof.
\end{proof}

\section{Remarks on the supercritical case}\label{s.121102}
In this section, we discuss the supercritical case ($s<1$). Two significant challenges arise in this setting. First, according to {\cite[Lemmas A.1 and A.3]{ghoul2022effect}}, the nonlinear term satisfies the following estimate: 
\begin{equation*}
    \begin{split}
        \Big|\Big\langle \LL^{\s} Q(V,V), \LL^{\s}  V \Big\rangle\Big| \leq  C_{\s} \|V\|_{\s} \|\LL^{\s+\frac{1}{2}} V\|^2. 
    \end{split}
\end{equation*}
Thus for $s<1$, the dissipation term $\|\LL^{\s+\frac{s}{2}} V\|^2$ is insufficient to control the nonlinearity. Indeed, as shown in \cite{abdo2024primitive}, the PE system with fractional dissipation $s<1$ is ill-posed in Sobolev spaces.
Therefore, one must work in the analytic class with a decaying analytic radius, similar to the approach in \cite{hu2023local}. 

The second difficulty arises when working with transport noise in the analytic class. Specifically, as shown in the example below, the cancellation of the highest-order terms involving the It\^o-Stratonovich corrector, as in \eqref{e.121004}, is no longer valid in the analytic setting unless the noise coefficient $b$ is independent of the spatial variable. If $b$ is spatially constant, the computations do not involve significant additional difficulties; thus, we omit them, referring interested readers to \cite{hu2023local}. Consequently, for $s<1$, the method developed in this work can only establish the local existence of pathwise solutions to \eqref{PE-system} in the analytic class when $b$ is spatially constant. The general case, where $b$ depends on spatial variables, requires alternative approaches and remains under investigation.

In the analytic setting, the cancellation terms involving the It\^o-Stratonovich corrector analogous to \eqref{e.121004} are given by
\[\big\langle \LL^{\s} e^{\tau \LL}(\Pc B_k)^2V, \LL^{\s} e^{\tau \LL}V\big\rangle + \|\LL^{\s}e^{\tau \LL}(\Pc B_k)V\|^2,\]
where $\tau>0$ is the analytic radius and $e^{\tau \LL}$ is defined in terms of the Fourier coefficients as:
\[(\widehat{e^{\tau\LL}f})_{\bk}  := e^{\tau|\bk|} \widehat{f}_{\bk}, \quad k \in 2\pi \mathbb Z^3.\]

The following example,  set on the 1D torus, demonstrates the cancellation of the terms with highest order Sobolev regularity, as in \eqref{e.121004}, generally does not hold when $\tau>0$. 
\begin{example}
    Let $b = 2\cos(x) = e^{ix}+e^{-ix}$ and denote by $a^*$ the complex conjugate of $a\in \mathbb C$. 
    A direct computation gives
    \begin{align*}
        I :&= \big\langle \LL^r e^{\tau \LL}  (b\partial_x(b\partial_x f)), \LL^r e^{\tau \LL}f\big\rangle\\
        & = -\sum_{k}2k^2|k|^{2r}e^{2\tau|k|}|\widehat{f_k}|^2-\sum_{k}(k-1)(k-2)|k|^{2r}e^{2\tau|k|}\widehat{f}_{k-2}\widehat{f}^*_k - \sum_{k} (k-1)k|k-2|^{2r}e^{2\tau |k-2|}\widehat{f}^*_{k-2}\widehat{f}_k, 
    \end{align*}
    and 
    \begin{align*}
        II:&=\big\langle \LL^r e^{\tau \LL}  (b\partial_x f), \LL^r e^{\tau \LL}(b\partial_x f)\big\rangle\\
        & = \sum_{k}((k+1)^{2r}e^{2\tau|k+1|}+(k-1)^{2r}e^{2\tau|k-1|})k^2|\widehat{f}_k|^2 + \sum_kk(k-2)|k-1|^{2r}e^{2\tau |k-1|}(\widehat{f}^*_{k-2}\widehat{f}_k+\widehat{f}_{k-2}\widehat{f}^*_k). 
    \end{align*}
    Using $\widehat{f}_k = \widehat{f}^*_{-k}$, we have
    \begin{align*}
        I+II = \sum_k \left(a(k)|\widehat{f}_k|^2 + b(k) \mathfrak{Re}( \widehat{f}_{k-2}\widehat{f}^*_k ) \right),
    \end{align*}
    where
    \begin{align*}
        a(k) &= ((k+1)^{2r}e^{2\tau|k+1|}+(k-1)^{2r}e^{2\tau|k-1|} - 2|k|^{2r}e^{2\tau|k|})k^2,\\
        b(k) &= 2k(k-2)|k-1|^{2r}e^{2\tau |k-1|} - (k-1)(k-2)|k|^{2r}e^{2\tau|k|} - k(k-1)|k-2|^{2r}e^{2\tau|k-2|}.
    \end{align*}
    When $\tau = 0$, all terms of degree greater than $2r$ cancel out. However, for $\tau >0$, such a cancellation breaks down due to the unequal exponential weights. This highlights a crucial difference between Sobolev-type estimates and analytic-type estimates. 
\end{example}

\section*{Acknowledgments}
R.H. was partially supported by a grant from the Simons Foundation (MP-TSM-00002783), and an ONR grant under \#N00014-24-1-2432. Q.L. was partially supported by an AMS-Simons travel grant.

\appendix
\section{Auxiliary lemmas}\label{section:auxlemma}

In this appendix, we summarize several lemmas that have been used repeatedly in our analysis. 

\begin{lemma}[see \cite{constantin2015long}]\label{l.071701}
    Let $s\geq 0$ and $f,g\in H^{s}\cap W^{1,\infty}$. We have 
    \begin{equation*}
        \begin{split}
            \|\LL^{s}(fg)\|_{L^2}\lesssim \|f\|_{L^{\infty}}\|g\|_{s}+\|g\|_{L^{\infty}}\|f\|_{s}.
        \end{split}
    \end{equation*}
    For $s>0$ we have 
    \begin{align*}
        \|\LL^s(fg)-f\LL^sg\|\lesssim \|\nabla f\|_{L^{\infty}}\|\LL^{s-1}g\|_{L^2}+\|\LL^sf\|_{L^{2}}\|g\|_{L^\infty}. 
    \end{align*}
\end{lemma}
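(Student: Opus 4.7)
The plan is to prove both inequalities by means of Bony's paraproduct decomposition combined with Littlewood--Paley analysis on $\mathbb{T}^3$. Let $\{P_j\}_{j\geq -1}$ be a standard Littlewood--Paley partition of unity with $P_j$ localized in an annulus $|\xi|\sim 2^j$ (and $P_{-1}$ absorbing the very low frequencies), and set $S_j = \sum_{k\leq j-1} P_k$. Then for any sufficiently regular $f,g$ we have the decomposition
\begin{equation*}
    fg \;=\; T_f g \;+\; T_g f \;+\; R(f,g),
\end{equation*}
where $T_f g := \sum_j S_{j-2} f \cdot P_j g$ (low--high), $T_g f$ is the symmetric high--low paraproduct, and $R(f,g) := \sum_{|j-k|\leq 2} P_j f \cdot P_k g$ is the high--high remainder. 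The crucial feature is that the summands in each of $T_fg$ and $T_gf$ are frequency-localized in an annulus $|\xi|\sim 2^j$, while the summands of $R(f,g)$ are localized in a ball $|\xi|\lesssim 2^j$. This enables Plancherel-based estimates in $\dot H^s$ together with almost orthogonality.

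For the product estimate, I would estimate the three pieces separately. For $T_f g$, each summand $S_{j-2}f \cdot P_j g$ has frequencies comparable to $2^j$, so $\Lambda^s$ contributes a factor $2^{js}$ and by almost orthogonality
\begin{equation*}
    \|\Lambda^s T_f g\|^2 \;\lesssim\; \sum_j 2^{2js} \|S_{j-2} f\|_{L^\infty}^2 \|P_j g\|^2 \;\lesssim\; \|f\|_{L^\infty}^2 \|g\|_s^2.
\end{equation*}
The symmetric term $T_g f$ yields $\|g\|_{L^\infty}\|f\|_s$. For the remainder $R(f,g)$, each dyadic piece lives at frequencies $\lesssim 2^j$, so one uses Bernstein to redistribute $\Lambda^s$, placing the derivatives on the factor whose $H^s$ norm is taken while keeping the other in $L^\infty$; summing in $j$ (using that $s\geq 0$) gives $\|f\|_{L^\infty}\|g\|_s + \|g\|_{L^\infty}\|f\|_s$.

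For the commutator estimate I would expand using the identity $f\Lambda^s g = T_f(\Lambda^s g) + T_{\Lambda^s g} f + R(f, \Lambda^s g)$, yielding
\begin{equation*}
    \Lambda^s(fg) - f\Lambda^s g \;=\; [\Lambda^s, T_f]g \;+\; \Lambda^s T_g f \;-\; T_{\Lambda^s g} f \;+\; \Lambda^s R(f,g) \;-\; R(f,\Lambda^s g).
\end{equation*}
The key gain comes from $[\Lambda^s, T_f]g$: writing the symbol on each dyadic block, a first-order Taylor expansion of $|\xi+\eta|^s$ around $\xi$ (valid since $|\eta|\ll|\xi|$ on the support of $S_{j-2}f \cdot P_j g$) produces a symbol of the form $s\,\xi\cdot\eta\,|\xi|^{s-2}$ plus higher order corrections, which translates into a bilinear operator that pairs $\nabla f$ with $\Lambda^{s-1}g$. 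This yields the $\|\nabla f\|_{L^\infty}\|\Lambda^{s-1}g\|$ contribution. The terms $\Lambda^s T_g f - T_{\Lambda^s g} f$ collapse to a bounded paraproduct on $f$ with $g$ in $L^\infty$ (after absorbing $\Lambda^s$ into the low factor), giving $\|\Lambda^s f\|\|g\|_{L^\infty}$, while $\Lambda^s R(f,g) - R(f, \Lambda^s g)$ is handled by the Bernstein trick as in the product estimate.

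The main obstacle is the high--high remainder $R(f,g)$, where frequency localization no longer pins the output frequency to a single factor, so one must carefully redistribute $\Lambda^s$ between the two factors and invoke almost orthogonality to avoid logarithmic losses. A secondary technical point is justifying the symbolic Taylor expansion for $[\Lambda^s, T_f]g$ rigorously --- this amounts to a mean value estimate on $|\xi|^s$ on the relevant frequency support and is standard but requires some care to obtain the sharp exponents $s-1$ and $s$ on the right-hand side.
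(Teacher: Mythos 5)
The paper does not prove this lemma; it is quoted from the literature (Kato--Ponce / Kenig--Ponce--Vega type estimates as collected in \cite{constantin2015long}), so there is no in-paper argument to compare yours against. Your choice of framework --- Bony's paraproduct decomposition together with almost orthogonality of Littlewood--Paley blocks --- is the standard modern route to both inequalities, and the treatment of the paraproduct pieces is correct: $\|\Lambda^s T_f g\|$ and $\|\Lambda^s T_g f\|$ come out directly from frequency localization and $\ell^2$ summation, and the gain in $[\Lambda^s, T_f]g$ indeed arises from a first-order Taylor expansion of the symbol $|\xi+\eta|^s$ in the low-frequency variable, yielding the $\|\nabla f\|_{L^\infty}\|\Lambda^{s-1}g\|$ contribution.

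The gap is precisely in the high--high remainder for the \emph{commutator}, which you wave off with ``handled by the Bernstein trick as in the product estimate.'' That trick does not transfer. In the product estimate, $P_m \Lambda^s R(f,g)$ picks up a factor $2^{ms}$ from the output frequency and a factor $2^{-js}\cdot 2^{js}$ after rewriting, producing the decaying kernel $2^{(m-j)s}$ (with $j\geq m-C$), and then $\ell^1\ast\ell^2$ closes the sum for $s>0$. In the commutator, the problematic piece is $R(f,\Lambda^s g)$: here one finds $\|P_m R(f,\Lambda^s g)\|\lesssim \|g\|_{L^\infty}\sum_{j\geq m-C}2^{js}\|P_j f\|$ (or, with Bernstein on $f$, $\lesssim \|\nabla f\|_{L^\infty}\sum_{j\geq m-C}2^{j(s-1)}\|\tilde P_j g\|$), and in either form the summand carries no factor decaying in $j-m$. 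The resulting $\ell^2_m$ sum of $\ell^1_j$ tails is \emph{not} controlled by $\|f\|_s$ (resp.\ $\|\Lambda^{s-1}g\|$); equivalently, the operator $h\mapsto R(f,h)$ is bounded $L^2\to L^2$ only for $f\in \dot B^0_{\infty,1}$, which is strictly stronger than $f\in L^{\infty}$. Treating $\Lambda^s R(f,g)$ and $R(f,\Lambda^s g)$ separately therefore does not close, and the remainder must instead be analyzed as a single bilinear operator with symbol $|\eta+\zeta|^s-|\zeta|^s$ on the region $|\eta|\sim|\zeta|$. One then uses a Coifman--Meyer type bilinear multiplier theorem, splitting off the region $|\eta+\zeta|\ll|\zeta|$ where the symbol $|\eta+\zeta|^s$ is not smooth (its derivatives degenerate like $|\eta+\zeta|^{s-k}$) and handling that low-output piece by a separate argument. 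This is exactly the technical core of the Kato--Ponce / Kenig--Ponce--Vega proofs; without it the sketch does not yield the stated estimate.
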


We recall the following compactness result. For proofs, see \cite[Theorem 5]{simon1986compact} and \cite[Theorem 2.1]{flandoli1995martingale}, respectively.

\begin{lemma} 
    \label{lemma:aubin-lions}
    a) \emph{(Aubin-Lions-Simon Lemma).} Let $X_2 \subset X \subset X_1$ be Banach spaces such that the embedding $X_2 \hook \hook X$ is compact and the embedding $X \hook X_1$ is continuous. Let $p \in (1, \infty)$ and $\alpha \in (0, 1)$. Then, the following embedding is compact
    \[
        L^p(0, t; X_2) \cap W^{\alpha, p}(0, t; X_1) \hook \hook L^p(0, t; X).
    \]
    
    b) Let $X_2 \subset X$ be Banach spaces such that $X_2$ is reflexive and the embedding $X_2 \hook \hook X$ is compact. Let $\alpha \in (0, 1]$ and $p \in (1, \infty)$ be such that $\alpha p > 1$. Then, the following embedding is compact
    \[
        W^{\alpha, p}(0, t; X_2) \hook \hook C([0, t], X).	
    \]
\end{lemma}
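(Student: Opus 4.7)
For part (a), my plan is to use Simon's characterization of relatively compact subsets of $L^p(0,t;X)$: a bounded family $F \subset L^p(0,t;X)$ is relatively compact if and only if $\|\tau_h f - f\|_{L^p(0,t-h;X)} \to 0$ as $h \to 0^+$ uniformly in $f \in F$ (here $\tau_h f(\cdot) = f(\cdot + h)$). Take a bounded sequence $(f_n)$ in $L^p(0,t;X_2) \cap W^{\alpha,p}(0,t;X_1)$; the continuous embedding $X_2 \hookrightarrow X$ (which follows from $X_2 \hookrightarrow\hookrightarrow X \hookrightarrow X_1$) gives boundedness in $L^p(0,t;X)$, so only the uniform equicontinuity in time remains. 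The key ingredient is the Ehrling–Lions interpolation inequality: for every $\varepsilon > 0$ there exists $C_\varepsilon$ such that
\begin{equation*}
    \|u\|_X \leq \varepsilon \|u\|_{X_2} + C_\varepsilon \|u\|_{X_1}, \qquad u \in X_2,
\end{equation*}
which is a direct consequence of the compact embedding $X_2 \hookrightarrow\hookrightarrow X$ (argue by contradiction: otherwise one extracts a sequence bounded in $X_2$ converging in $X_1$ but staying away from $0$ in $X$, contradicting compactness).

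Applying Ehrling–Lions pointwise and integrating, I obtain
\begin{equation*}
    \|\tau_h f_n - f_n\|_{L^p(0,t-h;X)} \leq \varepsilon \,\|\tau_h f_n - f_n\|_{L^p(0,t-h;X_2)} + C_\varepsilon \|\tau_h f_n - f_n\|_{L^p(0,t-h;X_1)}.
\end{equation*}
The first term is uniformly bounded by $2\varepsilon \sup_n \|f_n\|_{L^p(0,t;X_2)}$. For the second, I use the definition of the Sobolev–Slobodeckij norm:
\begin{equation*}
    \|f\|_{W^{\alpha,p}(0,t;X_1)}^p = \|f\|_{L^p(0,t;X_1)}^p + \int_0^t\!\!\int_0^t \frac{\|f(r) - f(s)\|_{X_1}^p}{|r-s|^{1+\alpha p}}\,dr\,ds,
\end{equation*}
to show that $\|\tau_h f - f\|_{L^p(0,t-h;X_1)} \leq C h^\alpha \|f\|_{W^{\alpha,p}(0,t;X_1)}$ (a standard Hardy-type estimate; note $\alpha \in (0,1)$ is used here). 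Choosing $\varepsilon$ small then $h$ small gives the required uniform equicontinuity in time, and Simon's theorem concludes the relative compactness.

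For part (b), since $\alpha p > 1$ the fractional Sobolev embedding $W^{\alpha,p}(0,t;X_1) \hookrightarrow C^{0,\alpha - 1/p}([0,t]; X_1)$ holds, so by replacing $X_1$ with $X$ (continuous embedding $X_2 \hookrightarrow X$) any bounded sequence in $W^{\alpha,p}(0,t;X_2)$ is equicontinuous in $C([0,t];X)$. Reflexivity of $X_2$ together with boundedness in $L^p(0,t;X_2) \subset W^{\alpha,p}(0,t;X_2)$ yields, at each fixed $r \in [0,t]$, weak compactness in $X_2$, hence relative compactness in $X$ by the compact embedding $X_2 \hookrightarrow\hookrightarrow X$. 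Arzelà–Ascoli in $C([0,t];X)$ then gives the desired compact embedding.

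The main obstacle is the translation-in-time estimate for $W^{\alpha,p}(0,t;X_1)$ in part (a), which must be carried out carefully to ensure the constant depends only on $\alpha, p, t$ and not on the sequence; this is the technical heart of Simon's proof. Since both parts are already established in the cited references \cite{simon1986compact,flandoli1995martingale}, the proposal would be to sketch these arguments and refer the reader there for complete details.
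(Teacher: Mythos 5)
The paper does not present a proof of this lemma; it simply cites \cite[Theorem 5]{simon1986compact} for part (a) and \cite[Theorem 2.1]{flandoli1995martingale} for part (b), which is exactly what you propose to do. Your sketch follows the standard proofs in those references, so the approach matches the paper's.

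Two points need tightening. In part (a), you state Simon's criterion as though uniform smallness of $\|\tau_h f - f\|_{L^p(0,t-h;X)}$ were by itself equivalent to relative compactness of a bounded family. That is not so: Simon's Theorem~1 requires in addition that the time-averages $\left\{\int_{t_1}^{t_2} f(s)\,ds : f\in F\right\}$ be relatively compact in $X$ for every $0<t_1<t_2<t$. In the present lemma that extra condition is supplied by the bound in $L^p(0,t;X_2)$ together with the compactness of $X_2\hook\hook X$; so the phrase ``only the uniform equicontinuity in time remains'' is misleading, since the $L^p(0,t;X_2)$ control is doing essential work beyond mere boundedness. In part (b), the inclusion is written backwards --- it should be $W^{\alpha,p}(0,t;X_2)\subset L^p(0,t;X_2)$ --- and, more to the point, pointwise relative compactness of $\{f_n(r)\}$ in $X$ does not follow from boundedness in $L^p(0,t;X_2)$: it comes from the Morrey embedding $W^{\alpha,p}(0,t;X_2)\hook C^{0,\alpha-1/p}([0,t];X_2)$ applied with target $X_2$ (not only with target $X$), which yields a uniform bound on $\sup_r\|f_n(r)\|_{X_2}$; the compact embedding $X_2\hook\hook X$ then gives relative compactness at each $r$, after which Arzel\`a--Ascoli finishes. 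With these corrections the sketch is sound and consistent with the cited sources.
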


We also recall the definitions of Sobolev spaces with fractional time derivative, see {\it e.g.}\ \cite{simon1990sobolev}. Let $X$ be a separable Hilbert space, $t > 0$, $p > 1$ and $\alpha \in (0, 1)$. The Sobolev space $W^{\alpha, p}(0, t; X)$ is defined as:
\[
	W^{\alpha, p}(0, t; X) =  \left\{ u \in L^p(0, t; X) \mid \int_0^t \int_0^t \frac{\vert u(s) - u(r) \vert_X^p}{|s - r|^{1+\alpha p}} \, dr \, ds < \infty \right\},
\]
with the norm:
\[
	\| u \|^{p}_{W^{\alpha, p}(0, t; X)} = \int_0^t \vert u(s) \vert^p_X \, ds + \int_0^t \int_0^t \frac{\vert u(s) - u(r) \vert_X^p}{|s - r|^{1+\alpha p}} \, dr \, ds.
\]

Finally, we recall the Burkholder-Davis-Gundy inequality. For a detailed proof, see \cite[Lemma 2.1]{flandoli1995martingale}. Here $L_2\left(\Uc, X\right)$ represents the Hilbert-Schmidt norm.
\begin{lemma}\label{bdg}
    Let $X$ be a separable Hilbert space. For $\Phi \in L^2\left(\Omega; L^2\left(0, T; L_2\left(\Uc, X\right)\right)\right)$, one has
    \begin{equation}\label{eq:bdg}
	\Eb \sup_{t \in \left[0, T\right]} \left| \int_0^t \Phi \, dW \right|^r_X \leq C_{r} \, \Eb \left( \int_0^T \| \Phi \|_{L_2(\Uc, X)}^2 \, dt \right)^{r/2}.
    \end{equation}
    Moreover, if $p \geq 2$ and $\Phi \in L^p\left(\Omega; L^p\left(0, T; L_2\left(\Uc, X\right)\right)\right)$, then
    \begin{equation}\label{eq:bdg.frac}
	\Eb \left| \int_0^\cdot \Phi \, dW \right|^p_{W^{\alpha, p}(0, T; X)} \leq c_{p} \, \Eb \int_0^T \| \Phi \|_{L_2(\Uc, X)}^p \, dt,
    \end{equation}
    for $\alpha \in [0, 1/2)$.
\end{lemma}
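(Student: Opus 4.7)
The plan is to establish the two bounds separately, treating the first as the standard Hilbert-space Burkholder-Davis-Gundy (BDG) inequality and then bootstrapping it to get the fractional Sobolev estimate.

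For the first inequality, set $M_t := \int_0^t \Phi \, dW$, which is a continuous $X$-valued martingale. Expanding $\Wb = \sum_k e_k W^k$ one has $M_t = \sum_k \int_0^t \Phi(s) e_k \, dW^k_s$, and a direct computation gives the trace quadratic variation $[M]_t = \int_0^t \|\Phi(s)\|_{L_2(\Uc,X)}^2 ds$. For $r \geq 2$, apply It\^o's formula (after an $\varepsilon$-regularization to handle the non-smoothness of $\|\cdot\|_X^r$ at the origin) to obtain
\begin{equation*}
\|M_t\|_X^r = r \int_0^t \|M_s\|_X^{r-2}\langle M_s, \Phi_s dW_s\rangle_X + \tfrac{r}{2} \int_0^t \|M_s\|_X^{r-2}\|\Phi_s\|_{L_2(\Uc,X)}^2 ds + R_t,
\end{equation*}
where $R_t$ collects curvature terms of the same order. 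Taking $\sup_{t\le T}$, using the scalar BDG inequality on the real stochastic integral, and then Young's inequality to absorb the factor $\|M\|^{r-2}$ into the left-hand side gives \eqref{eq:bdg}. For $r \in [1,2)$, reduce to the case $r=2$ via Lenglart's domination lemma: the nonnegative process $\|M\|_X^2$ is dominated in expectation by the increasing process $[M]$, so $\Eb(\sup_t \|M_t\|_X^2)^{r/2} \leq c_r \, \Eb [M]_T^{r/2}$.

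For the second inequality, split the $W^{\alpha,p}$ norm into the $L^p(0,T;X)$ part and the Gagliardo seminorm. The $L^p$ part is handled directly by Fubini and \eqref{eq:bdg}:
\begin{equation*}
\Eb \int_0^T \|M_t\|_X^p \, dt \leq T\, \Eb \sup_{t\in[0,T]} \|M_t\|_X^p \lesssim T \, \Eb \Big(\int_0^T \|\Phi\|_{L_2(\Uc,X)}^2 dt\Big)^{p/2},
\end{equation*}
and a further H\"older in time yields the desired bound. For the seminorm, for $s>r$ use $M_s - M_r = \int_r^s \Phi \, dW$, \eqref{eq:bdg}, and H\"older's inequality with exponent $p/2$ to obtain
\begin{equation*}
\Eb \|M_s - M_r\|_X^p \leq c_p \, \Eb\Big(\int_r^s \|\Phi\|_{L_2}^2 dt\Big)^{p/2} \leq c_p (s-r)^{p/2-1} \, \Eb \int_r^s \|\Phi(t)\|_{L_2}^p dt.
\end{equation*}
Substituting this into $\int_0^T\!\!\int_0^T (s-r)^{-1-\alpha p} \|M_s-M_r\|_X^p\, dr\, ds$ and exchanging the order of integration over the region $\{0\le r<t<s\le T\}$ yields
\begin{equation*}
\Eb \int_0^T \|\Phi(t)\|_{L_2}^p \left(\int_0^t\!\!\int_t^T (s-r)^{p/2 - 2 - \alpha p}\, ds\, dr\right) dt.
\end{equation*}
The inner kernel is uniformly bounded in $t\in[0,T]$ precisely when $p/2 - \alpha p > 0$, i.e.\ $\alpha < 1/2$; this yields \eqref{eq:bdg.frac}.

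The principal obstacle is the rigorous application of It\^o's formula to $\|M\|_X^r$ in the Hilbert setting, which requires verifying the second-order differentiability of the norm, an $\varepsilon$-smoothing near the origin, and careful identification of the curvature terms through the Hilbert-Schmidt structure of $\Phi$; this is where all of the extension from scalar to Hilbert-valued martingales is concentrated. Once \eqref{eq:bdg} is established, the fractional Sobolev bound reduces to a deterministic kernel computation whose integrability is governed entirely by the constraint $\alpha < 1/2$.
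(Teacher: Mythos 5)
The paper does not prove this lemma: it is a recalled fact, with the pointer ``For a detailed proof, see \cite[Lemma 2.1]{flandoli1995martingale}.'' There is therefore no internal proof to compare your argument against, but your reconstruction is correct and follows the standard route taken in that reference. The only point worth flagging for precision: in the Fubini step the kernel $\int_0^t\int_t^T (s-r)^{\gamma}\,ds\,dr$ with $\gamma=p/2-2-\alpha p$ has a singularity only at the single point $(r,s)=(t,t)$; in local coordinates $u=s-t$, $v=t-r$ the integrand is $(u+v)^{\gamma}$, which is locally integrable in the two-dimensional $(u,v)$-plane iff $\gamma>-2$, i.e.\ iff $\alpha<1/2$. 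Your claim of uniform boundedness in $t$ thus holds, but for the reason that the double integral over $[0,T]^2$ near a point singularity converges when $\gamma>-2$, not because the inner one-dimensional $s$-integral is finite (which would require the stronger $\gamma>-1$, i.e.\ $\alpha<1/2-1/p$). The rest — It\^o on $\|M_t\|_X^r$ with an $\varepsilon$-regularization near the origin for non-even $r$, scalar BDG and Young for $r\ge 2$, Lenglart's domination applied to $(\|M\|_X^2,[M])$ for $r\in[1,2)$, and the two H\"older steps with exponent $p/2$ for the Sobolev--Slobodeckij bound — is sound.
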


\section{Commutator estimates}\label{s.121103}
In this appendix, we omit the summation symbol for repeated indices. We first establish a result based on Lemma \ref{l.071701} and the following estimate obtained in \cite{chae2012generalized}: for any $s\in \Rb, \varepsilon>0$, and smooth functions $f,g$ over $\Tb^d$, there exists a constant $C_{s,\varepsilon}>0$ such that (where $\partial_j=\partial_{x_j}$)
\begin{align}\label{e.072201}
    \|[\LL^s\partial_j, g]f\|\leq C_{s,\varepsilon}\left(\|g\|_{H^{\frac{d}{2}+1+\varepsilon}}\|\LL^sf\|+\|g\|_{H^{\frac{d}{2}+1+s+\varepsilon}}\|f\|\right). 
\end{align}
It is worth noting that this result was originally proved in \cite{chae2012generalized} for the $\Rb^2$ case, and the generalization to $\Tb^d$ follows in the same manner.

\begin{lemma}\label{l.072102}
    Let $s>\frac52$ and $\alpha>-s$. Then for any smooth divergence-free vector field $b$ and function $V$ on $\Tb^3$, we have 
    \begin{align*}
    \|\LL^{\a} [\LL^{s},b\cdot\nabla]V\|\lesssim\|b\|_{s}\|V\|_{s+\a}+\|b\|_{\a+s}\|V\|_{s}. 
    \end{align*}
\end{lemma}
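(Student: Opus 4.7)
The plan is to decompose $\Lambda^\alpha[\Lambda^s, b\cdot\nabla]V$ into two commutator pieces that can be estimated by different tools. Since $\partial_j$ commutes with $\Lambda^s$, the identity (with Einstein summation)
\[
[\Lambda^s, b\cdot\nabla] V = [\Lambda^s, b^j]\partial_j V, \qquad [\Lambda^s, b^j]g := \Lambda^s(b^j g) - b^j \Lambda^s g,
\]
holds, and combining it with the operator identity $\Lambda^\alpha[\Lambda^s, b^j] = [\Lambda^{\alpha+s}, b^j] - [\Lambda^\alpha, b^j]\Lambda^s$ yields
\[
\Lambda^\alpha [\Lambda^s, b\cdot\nabla] V = [\Lambda^{\alpha+s}, b^j]\partial_j V - [\Lambda^\alpha, b^j] \partial_j \Lambda^s V =: T_1 - T_2.
\]

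For $T_1$, the hypothesis $\alpha > -s$ ensures $\alpha + s > 0$, so Lemma \ref{l.071701} (Kato--Ponce) applied to $\Lambda^{\alpha+s}(b^j\partial_j V) - b^j \Lambda^{\alpha+s}\partial_j V$ yields
\[
\|T_1\| \lesssim \|\nabla b\|_{L^\infty}\,\|\Lambda^{\alpha+s-1}\partial V\| + \|\Lambda^{\alpha+s} b\|\,\|\partial V\|_{L^\infty}.
\]
Since $s > 5/2$, the Sobolev embedding gives $\|\nabla b\|_{L^\infty}\lesssim \|b\|_s$ and $\|\partial V\|_{L^\infty} \lesssim \|V\|_s$, so $\|T_1\|\lesssim \|b\|_s\|V\|_{\alpha+s} + \|b\|_{\alpha+s}\|V\|_s$, matching the target bound.

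The main obstacle is $T_2$, because a direct Kato--Ponce application with $f = b^j$, $g = \partial_j\Lambda^s V$ would leave the factor $\|\partial \Lambda^s V\|_{L^\infty}$, which is not controlled by $\|V\|_s$. The key idea is to use the divergence-free condition $\partial_j b^j = 0$ to absorb the extra derivative into the commutator itself: for any test function $W$,
\[
[\Lambda^\alpha, b^j]\partial_j W = \Lambda^\alpha(b^j \partial_j W) - b^j \Lambda^\alpha \partial_j W = \Lambda^\alpha \partial_j(b^j W) - b^j \Lambda^\alpha \partial_j W = [\Lambda^\alpha\partial_j, b^j]W.
\]
Applying this with $W = \Lambda^s V$ recasts $T_2$ as $[\Lambda^\alpha\partial_j, b^j]\Lambda^s V$, to which estimate \eqref{e.072201} (with $s$ replaced by $\alpha$, $d=3$, $g = b^j$, $f = \Lambda^s V$, and $\varepsilon<s-5/2$) applies directly:
\[
\|T_2\|\lesssim \|b\|_{H^{5/2+\varepsilon}}\|\Lambda^{\alpha+s}V\| + \|b\|_{H^{5/2+\alpha+\varepsilon}}\|\Lambda^s V\| \lesssim \|b\|_s\|V\|_{\alpha+s} + \|b\|_{\alpha+s}\|V\|_s,
\]
where the final step uses $5/2 + \varepsilon \leq s$ and $5/2 + \alpha + \varepsilon \leq \alpha + s$. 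Summing the bounds for $T_1$ and $T_2$ closes the proof. The essential technical insight is that divergence-freeness converts a bad $\partial_j$ outside the commutator into a $\partial_j$ inside it, exactly aligning the term with the form required by \eqref{e.072201}.
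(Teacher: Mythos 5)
Your proof is correct and follows essentially the same route as the paper's: both decompose $\Lambda^\alpha[\Lambda^s, b\cdot\nabla]V$ into $[\Lambda^{\alpha+s}, b^j]\partial_j V$ and (using $\partial_j b^j = 0$) $[\Lambda^\alpha\partial_j, b^j]\Lambda^s V$, then apply Lemma \ref{l.071701} to the first and estimate \eqref{e.072201} to the second. The explicit operator identity you invoke and your remark about divergence-freeness converting the external $\partial_j$ into an internal one are just a slightly more expanded presentation of the same two lines in the paper.
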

\begin{proof}
    By the divergence-free property of $b$, we first rewrite
    \begin{align*}
        \LL^{\a} [\LL^{s},b\cdot\nabla]V &= \LL^{\a+s}(b^j\partial_jV) - \LL^{\a}(b^j\LL^s\partial_jV)\\
        & = [\LL^{\a+s},b_j]\partial_jV-[\LL^{\alpha}\partial_j,b^j]\LL^sV. 
    \end{align*}
    For a fixed $j$, since $\alpha+s>0$ and $s>\frac52$, we apply Lemma \ref{l.071701} to obtain:
    \begin{align*}
        \|[\LL^{\a+s},b_j]\partial_jV\|&\lesssim \|\nabla b_j\|_{L^{\infty}}\|\LL^{\alpha+s-1}\partial_jV\|+\|\LL^{\a+s}b_j\|\|\partial_jV\|_{L^{\infty}}\\
        &\lesssim \|b\|_{s}\|V\|_{\alpha+s}+\|b\|_{\a+s}\|V\|_s.
    \end{align*}

    Next, since $s>\frac{3}{2}+1$, it follows from \eqref{e.072201} that 
    \begin{align*}
        \|[\LL^{\alpha}\partial_j,b^j]\LL^sV\|\lesssim \|b\|_{s}\|\LL^{\alpha+s}V\|+\|b\|_{\a+s}\|\LL^sV\|. 
    \end{align*}
    Combining the above estimates, we obtain the desired estimate. 

\end{proof}

We next establish a lemma on a double commutator. 
\begin{lemma}\label{l.072001}
    Assume $s>5/2$. Let $b$ be a smooth divergence-free vector field on $\Tb^3$ with zero mean. We have 
    \begin{align*}
        \|[[\LL^{s},b\cdot\nabla],b\cdot\nabla]f\|\lesssim \|b\|_{s+1}^2\|f\|_{s},
    \end{align*}
    for any real-valued smooth function $f$ with zero mean. 
\end{lemma}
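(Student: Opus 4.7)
The plan is to expand $[[\Lambda^s,b\cdot\nabla],b\cdot\nabla]f$ into three pieces using $[\partial_i,\Lambda^s]=0$ and the Leibniz rule. Two of the three pieces can then be handled using the single-commutator estimates already proved in the appendix (Lemmas \ref{l.071701} and \ref{l.072102}), while the third requires a new double-commutator estimate. With summation over repeated indices, direct computation gives
\begin{align*}
[[\Lambda^s,b\cdot\nabla],b\cdot\nabla]f = T_1 + T_2 + T_3,
\end{align*}
where $T_1=[\Lambda^s,b^j]\bigl((\partial_j b^k)\partial_k f\bigr)$, $T_2=-b^j[\Lambda^s,\partial_j b^k]\partial_k f$, and $T_3=[[\Lambda^s,b^j],b^k]\partial_j\partial_k f$; the symmetry of $\partial_j\partial_k f$ in the indices has been used to combine the two ``second derivative'' contributions into a single double commutator.

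For $T_1$, I would apply the Kato--Ponce estimate (Lemma \ref{l.071701}) to the outer commutator with $g=(\partial_j b^k)\partial_k f$, then bound $\|g\|_{s-1}$ and $\|g\|_{L^\infty}$ via a product estimate together with the Sobolev embedding $H^{s-1}\hookrightarrow W^{1,\infty}$ (valid since $s>5/2$); this yields $\|T_1\|\lesssim \|b\|_s^2\|f\|_s$. For $T_2$, the key observation is that for each fixed $j$, the vector field $c^{(j)}:=(\partial_j b^k)_{k=1,2,3}$ is divergence-free, since $\sum_k\partial_k\partial_j b^k=\partial_j(\nabla\cdot b)=0$. Hence $[\Lambda^s,\partial_j b^k]\partial_k f=[\Lambda^s,c^{(j)}\cdot\nabla]f$, to which Lemma \ref{l.072102} with $\alpha=0$ applies, giving $\|T_2\|\lesssim \|b\|_s\|b\|_{s+1}\|f\|_s\leq \|b\|_{s+1}^2\|f\|_s$.

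The main obstacle lies in $T_3$. A naive decomposition $[[\Lambda^s,b^j],b^k]\partial_j\partial_k f=[\Lambda^s,b^j](b^k\partial_j\partial_k f)-b^k[\Lambda^s,b^j]\partial_j\partial_k f$ and separate estimation of each piece only produces a bound in terms of $\|f\|_{s+1}$, since each single commutator is of order $s$ and $\partial_j\partial_k f$ carries only two orders of slack. The cancellation between the two pieces must be exploited: $[[\Lambda^s,b^j],b^k]$ is in fact a pseudo-differential operator of order $s-2$, since each nested commutator of $\Lambda^s$ with a multiplication operator reduces the order by one. I would therefore prove the double-commutator estimate
\begin{align*}
\|[[\Lambda^s,b^j],b^k]\,g\|\lesssim \|b\|_s^2\|g\|_{s-2},
\end{align*}
and apply it with $g=\partial_j\partial_k f$ (using $\|\partial_j\partial_k f\|_{s-2}\lesssim \|f\|_s$) to obtain $\|T_3\|\lesssim \|b\|_s^2\|f\|_s$.

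To prove the double-commutator estimate, I would symmetrically expand
\begin{align*}
[[\Lambda^s,b^j],b^k]g=\Lambda^s(b^jb^k g)-b^j\Lambda^s(b^k g)-b^k\Lambda^s(b^j g)+b^jb^k\Lambda^s g,
\end{align*}
and apply Bony's paraproduct decomposition to each of the four terms, verifying that the leading paraproducts $T_{b^jb^k}\Lambda^s g$ cancel while the remaining paraproducts and resonance terms are operators of order $s-2$ (each carrying a commutator of $\Lambda^s$ with a paraproduct of order $s-1$ paired against a multiplication by $b$ of order $0$). Equivalently, at the symbol level one verifies the second-order Taylor bound $\bigl||\xi|^s-|\xi-\eta_1|^s-|\xi-\eta_2|^s+|\xi-\eta_1-\eta_2|^s\bigr|\lesssim |\eta_1|\,|\eta_2|\,(|\xi|+|\eta_1|+|\eta_2|)^{s-2}$, which is the Fourier-side manifestation of the same order reduction. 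Combining the estimates for $T_1, T_2, T_3$ completes the proof.
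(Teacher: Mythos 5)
Your decomposition $T_1+T_2+T_3$ is, after a short computation, the same three pieces the paper arrives at (the paper writes $[\LL^s,b\cdot\nabla]f=[\LL^s\partial_j,b^j]f$ via divergence-freeness and then splits $[[\LL^s\partial_j,b^j],b^\ell\partial_\ell]f$ into $I_1+I_2$ with $I_1 = [\LL^s,b^j](\partial_jb^\ell\,\partial_\ell f)+[[\LL^s,b^j],b^\ell]\partial_j\partial_\ell f$ and $I_2=b^\ell[\partial_\ell b^j,\LL^s]\partial_j f$; so $T_1$ is the first part of $I_1$, $T_3$ is the second, and $T_2=I_2$). Your treatment of $T_1$ coincides with the paper's. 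For $T_2$ you observe that $c^{(j)}=(\partial_jb^k)_k$ is divergence-free and invoke Lemma~\ref{l.072102} with $\alpha=0$; the paper instead applies the Kato--Ponce estimate of Lemma~\ref{l.071701} directly to $[\LL^s,\partial_\ell b^j]\partial_jf$ and multiplies by $\|b^\ell\|_{L^\infty}$ — both give a bound $\lesssim\|b\|_{s+1}^2\|f\|_s$, with your route being marginally cleaner. The genuine content of the lemma is $T_3=[[\LL^s,b^j],b^k]\partial_j\partial_kf$, and here the two arguments diverge in tooling while sharing the same idea: the paper expands each $\LL^s(\cdot)$ via the fractional Leibniz rule from \cite{li2019kato} with $s_1=2$ and checks that the $\LL^sf_{j\ell}$ and $\LL^{s,\alpha}f_{j\ell}$ ($|\alpha|=1$) terms cancel, whereas you propose a Bony/Fourier argument encoding the same order-reduction via the second-difference bound $\bigl||\xi|^s-|\xi-\eta_1|^s-|\xi-\eta_2|^s+|\xi-\eta_1-\eta_2|^s\bigr|\lesssim|\eta_1|\,|\eta_2|\,(|\xi|+|\eta_1|+|\eta_2|)^{s-2}$ (correct for $s\geq2$, hence valid here).

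One caveat on $T_3$: the intermediate claim $\|[[\LL^s,b^j],b^k]g\|\lesssim\|b\|_s^2\|g\|_{s-2}$ is slightly optimistic. Splitting the factor $(|\xi|+|\eta_1|+|\eta_2|)^{s-2}$ by which frequency dominates, the piece where a $b$-frequency dominates carries a weight $|\eta_1|^{s-1}|\eta_2|$ (or its mirror), and after Young's inequality with the $\ell^2$ placed on $\widehat g$ this produces $\|b\|_{s-1+3/2+\varepsilon}\,\|b\|_{1+3/2+\varepsilon}\,\|g\|\lesssim\|b\|_{s+1}\|b\|_s\|g\|_{s-2}$, not $\|b\|_s^2\|g\|_{s-2}$ (putting $\ell^1$ on $\widehat g$ instead would need $s>7/2$, which the hypothesis does not give). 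The paper's bound is likewise $\|b\|_{s+1}^2\|f\|_s$. Since that is exactly what the lemma asserts, this overstatement is harmless — but you should weaken the intermediate claim to $\|b\|_{s+1}^2\|g\|_{s-2}$ (or state $\|b\|_{s+1}\|b\|_s\|g\|_{s-2}$), and note that the remaining Young/paraproduct bookkeeping, while standard, is the step that actually needs to be written out: it is the crux of the lemma and is only sketched here.
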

\begin{proof}
    Since $b$ is divergence-free, we can rewrite:
    \begin{align}\label{e.072101}
        [\LL^s,b\cdot\nabla] f = \LL^s(\partial_j(b^jf)) - b^j\partial_j\LL^sf = [\LL^s\partial_j, b^j]f. 
    \end{align}
    Therefore, one has 
    \begin{align*}
        [[\LL^{s},b\cdot\nabla],b\cdot\nabla]f &= [\LL^s\partial_j, b^j]b^{\ell}\partial_{\ell}f-b^{\ell}\partial_{\ell}[\LL^s\partial_j, b^j]f\\
        & = [[\LL^s\partial_j, b^j],b^{\ell}](\partial_{\ell}f) + b^{\ell}[[\LL^s\partial_j, b^j],\partial_{\ell}]f:=I_1+I_2. 
    \end{align*}
    Below we analyze them individually. 
    
    Let $f_{\ell}=\partial_{\ell}f$. Since $b$ is divergence-free, one has 
    \begin{align*}
        I_1 &= [[\LL^s\partial_j, b^j],b^{\ell}](f_{\ell})\\
        & = \LL^s(b^j\partial_jb^{\ell}f_{\ell})+\LL^s(b^jb^{\ell}\partial_jf_{\ell}) - b^j\LL^s(\partial_jb^{\ell}f_{\ell})-b^j\LL^s(b^{\ell}\partial_jf_{\ell}) - b^{\ell}\LL^s(b^{j}\partial_jf_{\ell}) + b^{\ell}b^{j}\LL^s(\partial_jf_{\ell})\\
        & = [\LL^s, b^{j}]\partial_jb^{\ell}f_{\ell} + [\LL^s,b^{j}](b^{\ell}\partial_jf_{\ell}) - b^{\ell}[\LL^s,b^{j}](\partial_jf_{\ell})\\
        &= [\LL^s, b^{j}]\partial_jb^{\ell}f_{\ell} + [[\LL^s,b^{j}],b^{\ell}]\partial_jf_{\ell}.
    \end{align*}
    By Lemma \ref{l.071701} and Sobolev embeddings, we estimate
    \begin{align*}
        \|[\LL^s, b^{j}]\partial_jb^{\ell}f_{\ell}\|\lesssim \|\nabla b^j\|_{L^{\infty}}\|\LL^{s-1}(\partial_j b^{\ell}f_{\ell})\| + \|\LL^s b^j\|\|\partial_j b^{\ell}f_{\ell}\|_{L^{\infty}}\lesssim \|b\|_{s}^2\|f\|_s.
    \end{align*}
    For the second term in $I_1$, let $f_{j\ell} = \partial_jf_{\ell}$. Then
    \begin{align}\label{e.072001}
        [[\LL^s,b^{j}],b^{\ell}]f_{j\ell}= \LL^s(b^{j}b^{\ell}f_{j\ell}) - b^{j}\LL^s(b^{\ell}f_{j\ell}) -b^{\ell}\LL^s(b^{j}f_{j\ell})+b^{j}b^{\ell}\LL^sf_{j\ell}. 
    \end{align}
    By a straightforward application of the fractional Leibniz rule as in \cite[formula (1.6)]{li2019kato} with $s_1=2, p=2, g= f_{j\ell},$ we obtain that
    \begin{align*}
        \LL^s(b^{j}b^{\ell}f_{j\ell}) = \sum_{|\alpha|\leq 2}\frac{1}{\alpha!}\partial^{\alpha}(b^{j}b^{\ell})\LL^{s,\alpha}f_{j\ell} + \sum_{|\beta|< s-2}\frac{1}{\beta!}\partial^{\beta}f_{j\ell}\LL^{s,\beta}(b^{j}b^{\ell}) + E_1, 
    \end{align*} 
    where $\LL^{s,\alpha}$ is a multiplier of order $s-|\alpha|$ with $\LL^{s,0}=\LL^s$, and $\|E_1\|\lesssim \|\LL^2(b^{j}b^{\ell})\|_{L^{\infty}}\|\LL^{s-2}f_{j\ell}\|\lesssim \|b\|_{s+1}^2\|f\|_{s}$. Similarly, we have 
    \begin{align*}
        \LL^s(b^{\ell}f_{j\ell}) = \sum_{|\alpha|\leq 2}\frac{1}{\alpha!}\partial^{\alpha}b^{\ell}\LL^{s,\alpha}f_{j\ell} + \sum_{|\beta|< s-2}\frac{1}{\beta!}\partial^{\beta}f_{j\ell}\LL^{s,\beta}b^{\ell} + E_2, 
    \end{align*} 
    and  
    \begin{align*}
        \LL^s(b^{j}f_{j\ell}) = \sum_{|\alpha|\leq 2}\frac{1}{\alpha!}\partial^{\alpha}b^{j}\LL^{s,\alpha}f_{j\ell} + \sum_{|\beta|< s-2}\frac{1}{\beta!}\partial^{\beta}f_{j\ell}\LL^{s,\beta}b^{j} + E_3, 
    \end{align*}
    with 
    \[\|E_2\|\lesssim  \|\LL^2b^{\ell}\|_{L^{\infty}}\|\LL^{s-2}f_{j\ell}\| \lesssim \|b\|_{s+1}\|f\|_{s}, \text{ and } \|E_3\|\lesssim \|\LL^2b^{j}\|_{L^{\infty}}\|\LL^{s-2}f_{j\ell}\|\lesssim \|b\|_{s+1}\|f\|_{s}.\]
    In view of \eqref{e.072001}, the derivatives on $f_{j\ell}$ of orders $s-1$ and $s$ cancel out.  We therefore obtain by Sobolev embeddings that 
    \begin{align*}
        \|[[\LL^s,b^{j}],b^{\ell}]f_{j\ell}\|\lesssim \|b\|_{s+1}^2\|f\|_{s}. 
    \end{align*}

    For the commutator in $I_2$, we compute
    \begin{align*}
        [[\LL^s\partial_j, b^j],\partial_{\ell}]f &= \LL^s(b^j\partial_j\partial_{\ell}f) - b^j\LL^s\partial_j\partial_{\ell}f-\partial_{\ell}(\LL^s(b^j\partial_jf)) + \partial_{\ell}(b^j\LL^s\partial_jf)\\
        & = [\partial_{\ell}b^j,\LL^s]\partial_jf, 
    \end{align*}
    which together with Lemma~\ref{l.071701} lead to 
    \begin{align*}
        \|I_2\|\lesssim \|b^{\ell}\|_{L^{\infty}}\left(\|\nabla\partial_{\ell}b^j\|_{L^{\infty}}\|\LL^{s-1}\partial_jf\|+\|\LL^s\partial_{\ell}b^j\|\|\partial_jf\|_{L^{\infty}}\right)\lesssim \|b\|_{s+1}^2\|f\|_{s}. 
    \end{align*}
    This completes the proof. 

\end{proof}

Next, we provide a lemma concerning commutator estimates in Sobolev spaces with negative exponents. Since we could not locate a corresponding result in the literature, we provide the proof for completeness. 
\begin{lemma}\label{l.062701}
	Let $s\geq 1, 0\leq \a\leq s$ and $\beta>\frac32$.  For any smooth divergence-free vector field $b$ and function $V$ on $\Tb^3$, we have 
	\begin{align*}
		\|\LL^{-\a} [\LL^{s},b\cdot\nabla] V\|\lesssim_{s,\a,\b}\|b\|_{s+\a+\b}\|V\|_{s-\a}. 
	\end{align*}
\end{lemma}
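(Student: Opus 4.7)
The plan is to work directly in Fourier space. Since $\nabla\cdot b = 0$, the commutator has Fourier coefficients
\[
\widehat{[\LL^{s}, b\cdot\nabla] V}_k \;=\; i \sum_{j\in 2\pi\Z^3} (|k|^s - |j|^s)\, j_m\, \widehat{b^m}_{k-j}\, \widehat{V}_j,
\]
and the convexity of $x\mapsto x^s$ for $s\geq 1$ yields the pointwise bound $\bigl||k|^s - |j|^s\bigr| \leq s\,\max(|k|,|j|)^{s-1}\,|k-j|$. Writing $K=|k|$, $J=|j|$, $L=|k-j|$, the objective is to estimate $\sum_k K^{-2\alpha}|\widehat{[\LL^{s},b\cdot\nabla]V}_k|^2$ by $\|b\|_{s+\alpha+\beta}^2\|V\|_{s-\alpha}^2$.

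The overall strategy is a dyadic splitting of the $j$-sum into three regimes: \emph{low-high} (LH, $L\leq J/8$, so $K\sim J$), \emph{high-low} (HL, $J\leq L/8$, so $K\sim L$), and \emph{high-high} (HH, $L\sim J$). In each case I would redistribute weights between $\widehat{b}_{k-j}$ and $\widehat{V}_j$ so that the $V$-factor carries the Sobolev weight $J^{s-\alpha}$. Young's convolution inequality then delivers an $\ell^2_k$-bound by $\|\text{weight}\cdot\widehat{b}\|_{\ell^1}\cdot\|V\|_{s-\alpha}$, and Cauchy-Schwarz with the summable weight $\sum_{\ell\in 2\pi\Z^3\setminus\{0\}} |\ell|^{-2\beta}<\infty$ (needing $\beta>3/2$ in three dimensions) converts the $\ell^1$-norm into a Sobolev norm of $b$.

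The LH regime and the HH sub-regime with $K\gtrsim J$ are routine: after applying the convexity bound and the hypothesis $K\sim J$, the summand reduces to $L\cdot J^{s-\alpha}|\widehat{b}_{k-j}||\widehat{V}_j|$, giving $\|b\|_{1+\beta}\|V\|_{s-\alpha}\leq\|b\|_{s+\alpha+\beta}\|V\|_{s-\alpha}$ since $s+\alpha\geq 1$. In the HL regime, absorbing $K^s$ against $K^{-\alpha}$ leaves $L^{s-\alpha}J|\widehat{b}_{k-j}||\widehat{V}_j|$; if $s-\alpha\geq 1$ the extra $J$ matches $\|V\|_{1}\leq\|V\|_{s-\alpha}$, while if $s-\alpha<1$ I would exploit $J\leq L$ via the interpolation $J = J^{s-\alpha}J^{1-(s-\alpha)}\leq J^{s-\alpha}L^{1-(s-\alpha)}$ to transfer the surplus derivative onto $b$, producing again a bound of the form $\|b\|_{1+\beta}\|V\|_{s-\alpha}$.

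The main obstacle is the HH sub-regime in which $K\ll J\sim L$: the $K^{-\alpha}$-weight does not align with the convolution structure in $k$, and the naive Young estimate loses the negative Sobolev norm of $V$. I would resolve this by applying Cauchy-Schwarz directly on the $j$-sum with the weight $L^{-2\gamma}$ for a parameter $\gamma\in(3/2,\beta]$. Writing $J^{s+1}\lesssim L^{\alpha+1}J^{s-\alpha}$ (using $J\sim L$ and $\alpha\geq 0$), the summand becomes $K^{-\alpha}\bigl(L^{\alpha+1+\gamma}|\widehat{b}_{k-j}|\bigr)\bigl(L^{-\gamma}J^{s-\alpha}|\widehat{V}_j|\bigr)$, and the constraint $L\geq 2K$ available in this regime forces $L^{-\gamma}\leq(2K)^{-\gamma}$, extracting an extra $K^{-\gamma}$ decay beyond the $K^{-\alpha}$ already present. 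The outer sum $\sum_k K^{-2\alpha-2\gamma}$ then converges in $2\pi\Z^3$ precisely because $\alpha+\gamma>3/2$ (automatic from $\gamma>3/2$ and $\alpha\geq 0$), while the $b$-factor contributes $\|b\|_{\alpha+1+\gamma}\leq\|b\|_{s+\alpha+\beta}$ upon choosing $\gamma\leq\beta$ and using $s\geq 1$. Collecting the three regimes yields the claimed estimate.
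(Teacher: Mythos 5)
Your proof is correct and follows the same overall Fourier-side strategy as the paper (pointwise bound on $|k|^s-|j|^s$, frequency decomposition into low-high, high-low, and high-high regimes, Young's convolution inequality, and Cauchy--Schwarz with the weight $\sum_{\ell}|\ell|^{-2\beta}<\infty$ to convert $\ell^1$-sums of $\widehat{b}$ into Sobolev norms). The organization differs in two minor ways, and one sub-case is handled by a genuinely different argument.

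First, the paper opens with the pointwise bound $\bigl||k|^s-|j|^s\bigr|\lesssim_s |k-j|\bigl(|j|^{s-1}+|k-j|^{s-1}\bigr)$ rather than the equivalent convexity bound $\max(|k|,|j|)^{s-1}|k-j|$, and handles the two resulting terms separately: the $|j|^{s-1}$ piece is estimated in one shot (for all frequency configurations) using only $|j|\le|k|+|k-j|$ to absorb $|k|^{-\alpha}$ into the weights, while the $|k-j|^{s-1}$ piece gets the three-regime case analysis. Your $\max$-based split forces a case analysis from the start and then splits the HH regime further, but the arithmetic of redistributing $\alpha$ and $s$ between $b$ and $V$ is the same.

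The genuinely different step is your treatment of the HH sub-regime with $K\ll J\sim L$. You apply Cauchy--Schwarz in the $j$-sum with an auxiliary weight $L^{-\gamma}$, exploit $L\ge 2K$ to extract $K^{-\gamma}$ decay, and then sum $\sum_k K^{-2(\alpha+\gamma)}<\infty$. This works, but it is heavier than what is needed: the paper simply discards $|k|^{-\alpha}\le 1$ (valid on the nonzero lattice since $\alpha\ge0$), uses $|j|\sim|k-j|$ to write $|j|^{s+1}\lesssim|k-j|^{\alpha+1}|j|^{s-\alpha}$, and applies Young's convolution as in every other regime, giving $\|b\|_{\alpha+\beta+1}\|V\|_{s-\alpha}$ directly. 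In other words, the $K^{-\alpha}$ weight does not help you here, but neither is it an obstruction once you note it is bounded by $1$; your Cauchy--Schwarz detour is unnecessary machinery. The resulting bound $\|b\|_{\alpha+1+\gamma}$ with $\gamma\le\beta$ is in fact the same as the paper's $\|b\|_{\alpha+1+\beta}$ up to the harmless enlargement to $\|b\|_{s+\alpha+\beta}$, so your proof is valid, just less economical.
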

\begin{proof}
	Using the Minkowski inequality and the fact that 
	\[|a^s-b^s|\lesssim_{s}|a-b|(b^{s-1}+|a-b|^{s-1}),\]
	for $a,b\geq 0$,  we have
	\begin{align*}
		\begin{split}
			\| [\LL^{s},b\cdot\nabla] V\|_{-\a} 
			&= \left(\sum_{k}|k|^{-2\a}\left|\sum_j\widehat{b}_{k-j}\cdot ij \widehat{V}_j\left(|k|^{s}-|j|^{s}\right)\right|^2\right)^{\frac12}\\
			&\lesssim_{s} \left(\sum_{k}|k|^{-2\a}\left(\sum_j|\widehat{b}_{k-j}||j| |\widehat{V}_j||k-j|\left(|j|^{s-1}+|k-j|^{s-1}\right)\right)^2\right)^{\frac12}\\
			&\lesssim_{s} \left(\sum_{k}|k|^{-2\a}\left(\sum_j|\widehat{b}_{k-j}||\widehat{V}_j||k-j||j|^{s}\right)^2\right)^{\frac12}\\
			&\qquad + \left(\sum_{k}|k|^{-2\a}\left(\sum_j|\widehat{b}_{k-j}||j| |\widehat{V}_j||k-j|^{s}\right)^2\right)^{\frac12}:=I_1+I_2. 
		\end{split}
	\end{align*}
	From $|j|\leq |k|+|k-j|$ and Young's convolution inequality, we deduce: 
	\begin{align}\label{e.L062701}
		\begin{split}
			I_1 &= C_s\left(\sum_{k}\left(\sum_j|\widehat{b}_{k-j}||\widehat{V}_j||k-j|^{\a+1}|j|^{s-\a}\frac{|j|^{\a}}{|k|^{\a}|k-j|^{\a}}\right)^2\right)^{\frac12}\\
			&\lesssim_{s,\a}\left(\sum_{k}\left(\sum_j|\widehat{b}_{k-j}||\widehat{V}_j||k-j|^{\a+1}|j|^{s-\a}\right)^2\right)^{\frac12}\\
			&\lesssim_{s,\a}\left(\sum_j|\widehat{b}_{j}||j|^{\a+1}\right)\|V\|_{s-\a}\lesssim_{s,\a,\b}\|b\|_{\a+\b+1}\|V\|_{s-\a}. 
		\end{split}
	\end{align}
    Using Minkowski's inequality, we split $I_2$ into three parts: 
	\begin{align*}
		\begin{split}
			I_2 &\lesssim_{s}\left(\sum_{k}|k|^{-2\a}\left(\sum_{|j|\leq \frac12|k-j|}|\widehat{b}_{k-j}||j| |\widehat{V}_j||k-j|^{s}\right)^2\right)^{\frac12}+\left(\sum_{k}|k|^{-2\a}\left(\sum_{|j|\geq 2|k-j|}|\widehat{b}_{k-j}||j| |\widehat{V}_j||k-j|^{s}\right)^2\right)^{\frac12}\\
            &\qquad +\left(\sum_{k}|k|^{-2\a}\left(\sum_{\frac12|k-j|<|j|< 2|k-j|}|\widehat{b}_{k-j}||j| |\widehat{V}_j||k-j|^{s}\right)^2\right)^{\frac12}:=I_{21}+I_{22}+I_{23}. 
		\end{split}
	\end{align*}
For $I_{21}$, noting when $|j|\leq \frac12|k-j|$, there holds 
    \[\frac12|k-j|\leq |k|\leq \frac32|k-j|.\]
    Hence, by the assumption $s\geq 1$ and Young's convolution inequality we have  
    \begin{align*}
        \begin{split}
            I_{21} &\lesssim_{s,\a} \left(\sum_{k}\left(\sum_{|j|\leq \frac12|k-j|}|\widehat{b}_{k-j}||j| |\widehat{V}_j||k-j|^{s-\a}\right)^2\right)^{\frac12}\\
            &\lesssim_{s,\a} \left(\sum_{k}\left(\sum_{|j|\leq \frac12|k-j|}|\widehat{b}_{k-j}||\widehat{V}_j||k-j|^{s}|j|^{s-\a}\right)^2\right)^{\frac12}\\
            &\lesssim_{s,\a} \sum_{j}|\widehat{b}_{j}||j|^{s}\|V\|_{s-\a}\lesssim_{s,\a,\b} \|b\|_{s+\b}\|V\|_{s-\a}.
        \end{split}
    \end{align*}
    Similarly, we estimate 
    \[I_{22}\lesssim_{s,\a,\b} \|b\|_{s+\b}\|V\|_{s-\a}.\]
    For $I_{23}$, since $|k-j|\sim|j|$, we have 
    \begin{align*}
        \begin{split}
            I_{23}&\lesssim_{s}\left(\sum_{k}\left(\sum_{\frac12|k-j|<|j|< 2|k-j|}|\widehat{b}_{k-j}||k-j| |\widehat{V}_j||j|^{s}\right)^2\right)^{\frac12}
            \\
            &\lesssim_{s}\left(\sum_{k}\left(\sum_{\frac12|k-j|<|j|< 2|k-j|}|\widehat{b}_{k-j}||k-j|^{1+\a} |\widehat{V}_j||j|^{s-\a}\right)^2\right)^{\frac12}
            \\
            &\lesssim_{s,\a} \left(\sum_j|\widehat{b}_{j}||j|^{\a+1}\right)\|V\|_{s-\a}\lesssim_{s,\a,\b}\|b\|_{\a+\b+1}\|V\|_{s-\a}.
        \end{split}
    \end{align*}
    Therefore 
    \[I_2\lesssim_{s,\a,\b}\|b\|_{s+\a+\b}\|V\|_{s-\a},\]
    which together with \eqref{e.L062701} completes the proof. 

\end{proof}

Finally, we give a lemma regarding commutators involving the hydrostatic Leray projector $\Pc$  defined in \eqref{e.062501}. 
\begin{lemma}\label{l.072101}
    Assume $s>2$. Let $b=(b^1,b^2,b^3)\in C^{\infty}(\Tb^3,\Rb^3)$ be divergence-free and $\varphi\in C^{\infty}(\Tb^3,\Rb^2)$ such that $\Pc\varphi = \varphi$. Then we have 
    \[\|[\Pc,b\cdot\nabla]\varphi\|_{s}\lesssim \|b\|_{s+1}\|\varphi\|_{s}+\|\partial_z(b^1,b^2)\|_{s-1}\|\varphi\|_{s+1}.\]
    In addition, if $s>\frac52$, then one has
    \begin{align*}
        \|\LL^{-\frac12}[\LL^s,b\cdot\nabla][\Pc,b\cdot\nabla] \varphi\|\lesssim \|b\|_{s+3}^2\|\varphi\|_{s-\frac12}+\|b\|_{s+3}\|\partial_z(b^1,b^2)\|_{s-\frac32}\|\varphi\|_{s+\frac12}.
    \end{align*}
\end{lemma}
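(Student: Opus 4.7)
The strategy is to reduce the second inequality to the first via Lemma~\ref{l.062701}, and to prove the first by exploiting the double cancellation arising from the two divergence-free conditions $\nabla_h\cdot\overline{b^h}=0$ and $\nabla_h\cdot\bar\varphi=0$. For the reduction, setting $g=[\Pc,b\cdot\nabla]\varphi$ and applying Lemma~\ref{l.062701} with $\alpha=\tfrac{1}{2}$, $\beta=\tfrac{5}{2}$ gives $\|\LL^{-1/2}[\LL^s,b\cdot\nabla]g\|\lesssim \|b\|_{s+3}\|g\|_{s-1/2}$. The first estimate of the lemma, applied with $s$ replaced by $s-\tfrac{1}{2}$ (which still exceeds $2$ since $s>5/2$), controls $\|g\|_{s-1/2}$ by $\|b\|_{s+1/2}\|\varphi\|_{s-1/2}+\|\partial_zb^h\|_{s-3/2}\|\varphi\|_{s+1/2}$; combining these with $\|b\|_{s+1/2}\le\|b\|_{s+3}$ yields the second inequality.

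For the first inequality, the hypothesis $\Pc\varphi=\varphi$ gives $[\Pc,b\cdot\nabla]\varphi=-\Qb(b\cdot\nabla\varphi)$, so it suffices to estimate $\|\Qb(b\cdot\nabla\varphi)\|_s$. An integration by parts in $z$, using $\partial_zb^3=-\nabla_h\cdot b^h$ (the boundary terms vanish by periodicity), yields the identity $\overline{b\cdot\nabla\varphi^i}=\nabla_h\cdot\overline{b^h\varphi^i}$. Two divergence-free cancellations are now available: first, $\Pc\varphi=\varphi$ forces $\nabla_h\cdot\bar\varphi=0$; second, $\nabla_h\cdot\overline{b^h}=\overline{\nabla_h\cdot b^h}=-\overline{\partial_zb^3}=0$ by divergence-freeness together with periodicity in $z$. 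I would then decompose $\overline{b^h\varphi^i}=\overline{b^h}\,\overline{\varphi^i}+\overline{\tilde b^h\tilde\varphi^i}$, where tildes denote deviations from the $z$-mean, splitting the expression to be bounded into a barotropic piece $\mathrm{A}=\partial_\ell\Delta_h^{-1}\sum_{i,j}\partial_i\partial_j(\overline{b^{h,j}}\,\overline{\varphi^i})$ and a baroclinic piece $\mathrm{B}=\partial_\ell\Delta_h^{-1}\sum_{i,j}\partial_i\partial_j\overline{\tilde b^{h,j}\tilde\varphi^i}$.

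For $\mathrm{A}$, expanding $\partial_i\partial_j(\overline{b^{h,j}}\,\overline{\varphi^i})$ by Leibniz and summing over $i,j$, three of the four contributions vanish thanks to $\sum_j\partial_j\overline{b^{h,j}}=0$ and $\sum_i\partial_i\overline{\varphi^i}=0$, leaving only the bilinear form $\sum_{i,j}(\partial_i\overline{b^{h,j}})(\partial_j\overline{\varphi^i})$ with a single derivative on each factor. Since $\partial_\ell\Delta_h^{-1}$ has order $-1$ on 2D functions, the 2D Kato--Ponce inequality (Lemma~\ref{l.071701}) combined with the 2D Sobolev embedding $H^{2+\epsilon}_h\hookrightarrow W^{1,\infty}_h$ produces $\|\mathrm{A}\|_{H^s_h}\lesssim\|b\|_{s+1}\|\varphi\|_s$. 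For $\mathrm{B}$, rewriting $\partial_\ell\Delta_h^{-1}\partial_i\partial_j=-R_iR_j\partial_\ell$ with the 2D Riesz transforms $R_i$ and using $\|\overline{f}\|_{H^s_h}\le\|f\|_s$ reduces the estimate to $\|\nabla_h(\tilde b^h\tilde\varphi)\|_s$; the 3D Kato--Ponce inequality, together with the Poincar\'e--Sobolev bound $\|\tilde b^h\|_{L^\infty(\Tb^3)}\le\|\partial_zb^h\|_{L^\infty(\Tb^3)}\lesssim\|\partial_zb^h\|_{s-1}$, supplies the remaining $\|\partial_zb^h\|_{s-1}\|\varphi\|_{s+1}$ (the term where the derivative falls on $\tilde b^h$ is absorbed into $\|b\|_{s+1}\|\varphi\|_s$). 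The main obstacle is the singular structure of $\Qb$, whose symbol is singular along the entire $k_3$-axis rather than at a single point: naive commutator bounds would cost an extra horizontal derivative, and it is precisely the cascade of cancellations from $\nabla_h\cdot\overline{b^h}=0$ and $\nabla_h\cdot\bar\varphi=0$ that absorbs this loss and produces the refined right-hand side in which only $\|\partial_zb^h\|_{s-1}$ (and not $\|b\|_s$) multiplies the top-order norm $\|\varphi\|_{s+1}$.
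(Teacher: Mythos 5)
Your reduction of the second inequality to the first via Lemma~\ref{l.062701} (with $\alpha=\tfrac12$, $\beta=\tfrac52$, then applying the first bound at regularity $s-\tfrac12$) is exactly the paper's argument. For the first inequality you take a genuinely different route. The paper keeps the integration in $z$ implicit: it introduces the auxiliary vertical velocity $w=-\int_0^z\nabla_h\cdot\varphi$, expands $\nabla_h\cdot\overline{b\cdot\nabla\varphi}$ into terms where derivatives are redistributed onto $b^h$, $\varphi$, and $w$ (using only $\nabla_h\cdot\varphi=-\partial_z w$ and one integration by parts in $z$), and then bounds each integrand slice-by-slice in $z$ via the 2D Banach algebra property of $H^{s-1}_h$. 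You instead push the $z$-average all the way through to obtain the clean identity $\nabla_h\cdot\overline{b\cdot\nabla\varphi}=\partial_i\partial_j\overline{b^{h,j}\varphi^i}$, split into barotropic and baroclinic parts, and exploit \emph{both} divergence-free cancellations $\nabla_h\cdot\overline{b^h}=0$ and $\nabla_h\cdot\bar\varphi=0$ so that the barotropic piece degenerates to a single bilinear form $(\partial_i\overline{b^{h,j}})(\partial_j\overline{\varphi^i})$. This is a nice structural observation that the paper's proof does not make explicit, and it actually yields the slightly sharper bound $\|b\|_s\|\varphi\|_s$ (rather than $\|b\|_{s+1}\|\varphi\|_s$) for the barotropic part.

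There is, however, a gap in the baroclinic estimate in the regime $2<s\le\tfrac52$. You invoke $\|\tilde b^h\|_{L^\infty(\T^3)}\le\|\partial_z b^h\|_{L^\infty(\T^3)}\lesssim\|\partial_z b^h\|_{s-1}$; the second step is the 3D Sobolev embedding $H^{s-1}(\T^3)\hookrightarrow L^\infty(\T^3)$, which requires $s-1>\tfrac32$, i.e.\ $s>\tfrac52$, while the statement of the first inequality only assumes $s>2$. The paper avoids this by never leaving the slicewise framework: it needs only the 2D embedding $H^{s-1}_h\hookrightarrow L^\infty_h$ (valid for $s>2$) together with $L^2_z$ integration, so the constraint is exactly $s>2$. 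Your argument can be repaired by replacing the 3D Sobolev step with the anisotropic chain $\|\tilde b^h\|_{L^\infty(\T^3)}\lesssim\|\partial_z b^h\|_{L^\infty_{x'}L^2_z}\lesssim\|\partial_z b^h\|_{H^{1+\varepsilon}_{x'}L^2_z}\le\|\partial_z b^h\|_{s-1}$ for $\varepsilon=s-2>0$ (Poincar\'e and Cauchy--Schwarz in $z$, then 2D Sobolev in $x'$), which recovers the full range $s>2$. As written, though, your proof covers $s>\tfrac52$, which does suffice for every application of the lemma in the paper, but does not quite establish the statement as given.
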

\begin{proof}
    To simplify the notation, let $w=-\int_0^z \nabla_h \cdot \varphi(x',\tilde z) d\tilde z$
    so that $\nabla_h \cdot \varphi + \partial_z w = 0$. From the definition in \eqref{e.062501}, we have 
    \begin{align*}
        [\Pc,b\cdot\nabla]\varphi = \Pc(b\cdot\nabla\varphi) - b\cdot\nabla\Pc\varphi = (I-\mathbb Q)(b\cdot\nabla\varphi) - b\cdot\nabla\varphi = -\mathbb Q(b\cdot\nabla\varphi). 
    \end{align*}
    Note that (recall from \eqref{e.062501} that $\overline{f}$ is the average of $f$ with respect to the $z$ variable)
    \begin{align*}
        \mathbb Q(b\cdot\nabla\varphi) = \nabla_h\Delta_h^{-1}\nabla_h\cdot(\overline{b\cdot\nabla\varphi}) 
    \end{align*}
    depends only on the horizontal variables $(x_1,x_2)$.
    Using integration by parts, the divergence-free property of $b,(\varphi,w)$ and the periodicity, we compute 
    \begin{align*}
        \nabla_h\cdot\overline{b\cdot\nabla\varphi} &=\int_0^1\nabla_h\left((b^1,b^2)\cdot\nabla_h\right)\cdot\varphi dz + \int_0^1(b^1,b^2)\cdot\nabla_h(\nabla_h\cdot\varphi)dz + \nabla_h\cdot\int_0^1b^3\partial_z\varphi dz\\
        & = \int_0^1\nabla_h\left((b^1,b^2)\cdot\nabla_h\right)\cdot\varphi dz+\int_0^1\partial_z(b^1,b^2)\cdot\nabla_hw dz+ \nabla_h\cdot\int_0^1\varphi\nabla_h\cdot(b^1,b^2) dz.
    \end{align*}
    
    When $s>2$, the space $H_{h}^{s-1}$ is a Banach algebra in the $2D$ case. Since $\mathbb Q(b\cdot\nabla\varphi)$ depends only on the horizontal variables $(x_1,x_2)$, applying Minkowski's inequality and H\"older inequality, we have 
    \begin{align*}
        \|\mathbb Q(b\cdot\nabla\varphi)\|_{s} = \|\mathbb Q(b\cdot\nabla\varphi)\|_{H^s_{h}}
        &\lesssim\| \nabla_h\cdot\overline{b\cdot\nabla\varphi}\|_{H^{s-1}_h}\\
        &\lesssim \int_0^1 \left(\|b(\cdot,z)\|_{H^{s+1}_h}\|\varphi(\cdot,z)\|_{H^{s}_h}+\|\partial_z(b^1,b^2)(\cdot,z)\|_{H^{s-1}_h}\|w(\cdot,z)\|_{H^{s}_h}\right) dz
        \\
        &\lesssim \|b\|_{s+1}\|\varphi\|_{s}+\|\partial_z(b^1,b^2)\|_{s-1}\|\varphi\|_{s+1}. 
    \end{align*}
    
    Combining this with Lemma \ref{l.062701}, when $s>\frac52$ we conclude
    \begin{align*}
        \|\LL^{-\frac12}[\LL^s,b\cdot\nabla][\Pc,b\cdot\nabla] \varphi\|
        &\lesssim \|b\|_{s+3}\|[\Pc,b\cdot\nabla]\varphi\|_{s-\frac12}\\
        &\lesssim \|b\|_{s+3}\left( \|b\|_{s+\frac12}\|\varphi\|_{s-\frac12}+\|\partial_z(b^1,b^2)\|_{s-\frac32}\|\varphi\|_{s+\frac12}\right)\\
        &\lesssim \|b\|_{s+3}^2\|\varphi\|_{s-\frac12}+\|b\|_{s+3}\|\partial_z(b^1,b^2)\|_{s-\frac32}\|\varphi\|_{s+\frac12}. 
    \end{align*}
\end{proof}
\bibliographystyle{plain}
\bibliography{Reference}

\end{document}